\date{\today}
\keywords{}
\title{Degenerate homoclinic bifurcations in   complex dimension 2}
\author{Romain Dujardin}
\address{Sorbonne Universit\'e, Laboratoire de probabilit\'es, statistique et mod\'elisation, UMR 8001,  4 place Jussieu, 75005 Paris, France}
\email{romain.dujardin@sorbonne-universite.fr}
\newcommand{\C}{\mathbb{C}}
\newcommand{\B}{\mathbb{B}}
\newcommand{\R}{\mathbb{R}}
\newcommand{\D}{\mathbb{D}}
\newcommand{\Z}{\mathbb{Z}}
\newcommand{\N}{\mathbb{N}}
\renewcommand{\P}{\mathbb{P}}
\newcommand{\e}{\varepsilon}
\newcommand{\fkm}{\mathfrak{M}}
\newcommand{\fr}{\partial}
\newcommand{\set}[1]{\left\{#1\right\}}
\newcommand{\norm}[1]{\left\Vert#1\right\Vert}
\newcommand{\smallnorm}[1]{\Vert #1\Vert}
\newcommand{\abs}[1]{\left\vert#1\right\vert}
\newcommand{\cd}{{\C^2}}
\newcommand{\pu}{{\mathbb{P}^1}}
\newcommand{\rest}[1]{ \arrowvert_{#1}}
\newcommand{\unsur}[1]{\frac{1}{#1}}
\newcommand{\el}{\mathcal{L}}
\newcommand{\cst}{\mathrm{C}^\mathrm{st}}
\newcommand{\lrpar}[1]{\left(#1\right)}
\newcommand{\la}{\lambda}
\newcommand{\lo}{{\lambda_0}}
\newcommand{\loc}{{\mathrm{loc}}}
\newcommand{\hot}{{\mathrm{h.o.t.}}}
\newcommand{\La}{\Lambda}
\newcommand{\jstar}{J^\varstar}
\newcommand{\inv}{^{-1}}
\DeclareMathOperator{\mult}{mult}
\DeclareMathOperator{\codim}{codim}
\DeclareMathOperator{\jac}{Jac}
\DeclareMathOperator{\Diff}{Diff}
\DeclareMathOperator{\slope}{slope}
\newcommand{\diamant}{\medskip \begin{center}$\diamond$\end{center}\medskip}
\newtheorem{prop}{Proposition} [section]
\newtheorem{lem}[prop] {Lemma}
\newtheorem{propdef}[prop]{Proposition-Definition}
\theoremstyle{remark}
\newtheorem{rmk}[prop]{Remark}
\theoremstyle{definition}
\newtheorem{normalization}[prop]{Normalization}
\theoremstyle{theorem}
\newtheorem{mthmA}{Theorem}
\newtheorem{mthmB}{Theorem}
\newtheorem{mcorA}[mthmA]{Corollary}
\newtheorem{mcorB}[mthmB]{Corollary}
\newcommand{\commentaire}[1]{}
\begin{document}

\begin{abstract}
Unfolding homoclinic tangencies is the main source of bifurcations in 2-dimensional (real or complex) dynamics. When studying this phenomenon, it is common to assume that tangencies are quadratic and unfold with positive speed. Adapting to the complex setting an argument of Takens, we show that  any 1-parameter family of 2-dimensional holomorphic diffeomorphisms unfolding an arbitrary non-persistent 
homoclinic  tangency contains such quadratic tangencies. Combining this with  recent  
results of Avila-Lyubich-Zhang and former results in collaboration with Lyubich, this yields the abundance of robust homoclinic tangencies in the bifurcation locus for complex Hénon maps. We also study bifurcations induced by families with persistent tangencies, which provide another approach to the complex Newhouse phenomenon.
\end{abstract}

 \maketitle 
 
 \setcounter{tocdepth}{1}
 \tableofcontents
 
\section{Introduction}

Bifurcation theory is the study of the mechanisms creating instability in smooth dynamics. For 
surface diffeomorphisms, the most basic such mechanism is the unfolding of a homoclinic tangency, and a long standing conjecture of Palis predicts that  homoclinic tangencies are the building block  of 
all bifurcations. Recall that a homoclinic tangency is a tangency between the stable manifold and the unstable manifold of a saddle periodic point.
We refer the reader to the  classical monograph of Palis and Takens~\cite{palis-takens}
for an introduction to this topic. 

When studying the unfolding of a homoclinic tangency, 
it is common to restrict to a 1-parameter  family (or more generally a finite dimensional parameter family) where the unfolding is ``as transverse as it can be'', namely that 
the tangency is quadratic and  detaches  with positive speed. Without these assumptions, the 
  analysis of the bifurcation becomes much more delicate --the situation is somehow parallel to the difference between the quadratic family and a general multimodal family in one-dimensional dynamics. 
In the smooth ($C^k$) category this restriction is essentially harmless since one can always ensure 
these properties  in a generic family --hence the usual terminology 
\emph{generic homoclinic tangencies}. On the other hand, 
going to the analytic, or even algebraic, category, 
where the parameter spaces are typically much smaller, the genericity of such tangencies becomes 
an interesting problem. In~\cite{tangencies}, the prevalence of homoclinic tangencies in the space of complex Hénon mappings of a given degree was studied by M. Lyubich and the author, and the Palis conjecture was confirmed under mild dissipativity assumptions. 
However, the question of the genericity of these tangencies was left open.  

In a remarkable, but seemingly not so well-known,
 paper, Takens~\cite{takens} proved that in any family of 
real-analytic surface diffeomorphisms presenting an ``inevitable tangency'' (that is, in  which 
 a tangency must happen for topological reasons), then under a non-degeneracy assumption on the 
 eigenvalues at the saddle point,  
 generic tangencies are dense in the tangency locus. Other relevant references include 
 Robinson~\cite{robinson} and Davis~\cite{davis},  
 where cascades of sinks are created from  tangencies of 
  arbitrary order for real-analytic diffeomorphisms of surfaces (in~\cite{robinson}) and for $C^\infty$-
  diffeomorphisms under a $C^\infty$ linearizability condition (in~\cite{davis}).
 Many of the techniques developed in these papers take advantage of   plane topology and  
 geometry,   so non-trivial work needs  to be carried out to  adapt them to the complex setting. 
  Note that conversely, obtaining degenerate tangencies from non-degenerate ones  is also interesting 
  and leads to rich dynamical phenomena (see e.g. \cite{gonchenko_turaev_shilnikov, gonchenko_turaev_shilnikov2}).

\commentaire{Expo. Rmk \#8: paragraph added}
Several important  
references on homoclinic tangencies for 2-dimensional holomorphic diffeomorphisms deal with 
the problem of  the existence and prevalence of robust tangencies
(\footnote{We use the terminology  ``persistent homoclinic tangency'' only for a persistent tangency between the stable and unstable 
manifold of some \emph{periodic} point, and the adjective ``robust'' for Newhouse-type tangencies 
associated to hyperbolic sets.}), which in the dissipative case yields locally dense set of parameters 
  with infinitely many sinks  (the Newhouse phenomenon). Buzzard~\cite{buzzard:annals} showed the 
  existence of robust homoclinic tangences in the space of  polynomial automorphisms of sufficiently 
  large degree. 
 In relation with this work,  Biebler~\cite{biebler:gap} and Araujo-Moreira~\cite{araujo-moreira}    
 studied geometric   mechanisms for the  robust intersection  of plane Cantor sets.
  Very recently, Avila, Lyubich and Zhang~\cite{avila-lyubich-zhang} have  
  announced that  for dissipative holomorphic diffeomorphisms, 
  every non-persistent  quadratic homoclinic tangency yields a robust one. This relies on a different mechanism for intersections of plane Cantor sets. 
  Finally, Berger and Biebler~\cite{berger-biebler}   used real polynomial diffeomorphisms  unfolding  
  5 independent tangencies to construct examples  of  wandering Fatou components. 

\diamant 

The first main result in this paper is a generalization of Takens' theorem 
 to   holomorphic diffeomorphisms. 

\begin{mthmA}\label{thm:quadratic}
Let $(f_\lambda)_{\lambda\in \Lambda}$ be a holomorphic family of holomorphic diffeomorphisms 
defined in some  domain $\Omega\subset \cd$,   parameterized by a complex manifold $\Lambda$. 
Assume that in the neighborhood of 
 some  $\lambda_0\in \Lambda$, $f_\la$ possesses a saddle fixed point $p_\la$ 
 with a non persistent homoclinic tangency at $\lo$.  Assume furthermore that there is no persistent relation of the form $u_\lambda^as_\lambda^b=  1$, where  
 $s_\lambda$ and $u_\lambda$ are the respective stable and unstable multipliers of $p_\lambda$ and $a$ and $b$ are positive integers. 

Then there exists $\lambda_1$ 
arbitrarily close to $\lo$  such that $p_{\lambda_1}$ has   
 a quadratic homoclinic tangency,  unfolding with positive speed. 
\end{mthmA}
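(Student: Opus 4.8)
\emph{Strategy.} I would produce the desired tangency by a ``zooming in'' (renormalization) argument near the saddle, following the scheme of Takens. Abbreviate $s = s_{\lo}$, $u = u_{\lo}$, and assume for simplicity that $\dim \Lambda = 1$ (the general case is analogous, the parameter--matching below being only easier with extra parameters). The hypothesis is used as follows: the assumption that no relation $u_\lambda^a s_\lambda^b = 1$ with $a, b$ positive integers holds persistently is precisely the statement that the parameters at which $f_\lambda$ is resonant at $p_\lambda$ form a countable union of proper analytic subsets of $\Lambda$; off this set $f_\lambda$ admits near $p_\lambda$ a holomorphic normal form adequate for the rescaling below --- in the simplest situation a holomorphic linearization, in which $W^s_\loc(p_\lambda) = \{y=0\}$, $W^u_\loc(p_\lambda) = \{x=0\}$ and $f_\lambda(x,y) = (s_\lambda x, u_\lambda y)$. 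Securing such a normal form with a fixed-size domain near $\lo$ is a technical point returned to at the end. Fix a homoclinic point $q$ of the tangential orbit; replacing it by a forward iterate we may take $q = (a_\lambda, 0) \in W^s_\loc(p_\lambda)$ with $a_\lambda \ne 0$. At $\lo$ the local branch $\gamma^u_\lambda$ of $W^u(p_\lambda)$ through $q$ is tangent to $\{y = 0\}$, hence is a graph $\{y = G(\lambda, \xi)\}$ over $\xi = x - a_\lambda$, with $G$ holomorphic in $(\lambda, \xi)$. Setting $g_j(\lambda) = \tfrac1{j!}\,\partial_\xi^j G(\lambda, 0)$, the tangency order at $\lo$ is the integer $k \ge 2$ with $g_0(\lo) = \dots = g_{k-1}(\lo) = 0 \ne g_k(\lo) =: c_0$, and because the tangency is not persistent, not all of $g_0, \dots, g_{k-1}$ vanish identically on $\Lambda$. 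A homoclinic tangency is quadratic and unfolds with positive speed exactly when the corresponding offset has a nondegenerate critical point whose critical value varies to first order with the parameter, and the goal is to realize this for some $\lambda_1$ near $\lo$.

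\emph{Renormalization.} Fix $n$ a large multiple of $k$. The part of $f_\lambda^n(\gamma^u_\lambda)$ lying in the linearizing chart near $p_\lambda$ is the image of $\{|\xi| \lesssim |u|^{-n/k}\}$, that is, the set of points $(s^n(a_\lambda + \xi),\, u^n G(\lambda, \xi))$ up to negligible distortion; these accumulate on $(s^n a_\lambda, 0) \in W^s_\loc(p_\lambda) \subset W^s(p_\lambda)$. After the linear rescaling $X = u^{n/k}\xi$ of the internal parameter and $\widehat Y = y/c_0$ of the second coordinate (and recentering and normalizing the first), this piece of $W^u(p_\lambda)$ becomes the graph of
\begin{equation*}
\widehat G_n(\lambda, X) \;=\; \frac{1}{c_0}\sum_{j \ge 0} g_j(\lambda)\, u^{\,n(1 - j/k)}\, X^j,
\end{equation*}
while $W^s_\loc(p_\lambda)$ stays $\{\widehat Y = 0\}$; hence a quadratic tangency of this graph with $\{\widehat Y = 0\}$, unfolding with positive speed, corresponds to one of $f_\lambda$. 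As $n \to \infty$ the terms with $j > k$ tend to $0$ locally uniformly, the term $j = k$ tends to $X^k$, and each term with $j < k$ blows up unless $g_j(\lambda)$ is taken of size $O\bigl(|u|^{-n(1-j/k)}\bigr)$.

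\emph{Extracting the quadratic model.} Suppose first $k = 2$ and that $g_0$ vanishes to first order at $\lo$. Let $\epsilon$ range over a fixed disk and put $\lambda_n(\epsilon) = \lo + \dfrac{c_0\,\epsilon}{g_0'(\lo)}\,u^{-n} + O(|u|^{-2n})$. Then $\tfrac1{c_0}\,g_0(\lambda_n(\epsilon))\,u^{n} \to \epsilon$, while $\tfrac1{c_0}\,g_1(\lambda_n(\epsilon))\,u^{n/2} \to 0$ automatically, since $g_1(\lo) = 0$ forces $g_1(\lambda_n(\epsilon)) = O(|u|^{-n}) = o(|u|^{-n/2})$; therefore $\widehat G_n(\lambda_n(\epsilon), \cdot) \to X^2 + \epsilon$ locally uniformly in $(X, \epsilon)$. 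The model family $\{\widehat Y = X^2 + \epsilon\}$ has at $\epsilon = 0$ a quadratic tangency with $\{\widehat Y = 0\}$ unfolding with unit speed; this configuration is stable under small holomorphic perturbations (implicit function theorem), so for $n$ large the original family has, for $\lambda$ near $\lambda_n(0)$, a quadratic homoclinic tangency of $p_\lambda$ unfolding with positive speed. As $\lambda_n(0) \to \lo$, this yields the required $\lambda_1$.

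\emph{Main obstacle and remaining technicalities.} The hard part is the last two steps when $k \ge 3$, or when $g_0$ and the other $g_j$ with $j < k$ vanish to higher order at $\lo$: with a single effective parameter one cannot in general tune all $k-1$ subleading coefficients of $\widehat G_n$ independently, and a careless choice of $\lambda_n(\epsilon)$ leaves a surviving term of order $\ge 3$. I would handle this by a descending induction on the tangency order: since the tangency is not persistent some $g_{j^*}$ with $j^* < k$ is not identically zero, and choosing the rescaling --- the exponent $\alpha$ in $\lambda_n(\epsilon) - \lo \asymp u^{-\alpha n}$, together with the direction of approach when $\dim \Lambda \ge 2$ --- so as to keep exactly that coefficient alive while killing the others, one obtains nearby homoclinic tangencies of order strictly less than $k$; iterating one reaches $k = 2$, and a final renormalization arranges the first-order vanishing of the critical value. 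This combinatorial bookkeeping, the complex analogue of Takens' argument, is where I expect most of the work to lie. Lastly one must justify the normal-form assertion: a holomorphic normal form near $p_\lambda$ with domain of a fixed size on some parameter ball arbitrarily close to $\lo$, whose closure meets the tangency locus --- this follows from the non-persistence of the relations $u_\lambda^a s_\lambda^b = 1$ together with a Baire category argument.
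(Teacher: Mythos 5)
There is a genuine gap, and it sits at the heart of your scheme. In your renormalization you iterate the unstable branch forward near the saddle and look for tangencies of the rescaled graph $\widehat Y=\widehat G_n(\lambda,X)$ with $\{\widehat Y=0\}=W^s_\loc(p_\lambda)$. But since $\widehat G_n(\lambda,X)=\tfrac{u^n}{c_0}G(\lambda,u^{-n/k}X)$ and $W^s_\loc$ is invariant, the conditions $\widehat G_n=\partial_X\widehat G_n=0$ are exactly $G=\partial_\xi G=0$: you are only re-detecting the original tangency at smaller and smaller scale, not creating new ones. Consequently, in your $k=2$ case the hypothesis ``$g_0$ vanishes to first order at $\lo$'' is essentially the positive-speed conclusion itself, and the limit family $X^2+\epsilon$ proves nothing new; and for $k\ge 3$ no mechanism for lowering the order is present. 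The new tangencies must come from intersecting $\Delta^u_\lambda$ with \emph{other} stable leaves: the paper first shows (Proposition~\ref{prop:secondary}) that a tangency forces transverse homoclinic points, hence a horseshoe whose stable lamination is a Cantor set of vertical graphs accumulating $W^s_\loc$, and the unfolding branch sweeps across this lamination (Lemma~\ref{lem:creating_tangencies}), producing a perfect set of secondary tangency parameters. The order is then lowered not by ``tuning'' subleading coefficients (impossible with one parameter), but by choosing a secondary tangency of minimal order and multiplicity and showing $h=m=1$: this uses $C^\ell$ graph-transform estimates $\Vert(\el^n\gamma)^{(j)}\Vert=O(|su^{-j}|^n)$ and the dynamical slope to convert the assumption $h\ge 2$ into an asymptotic relation $u_0^{-hq}\cong(u_0^{-1}s_0)^m$ along infinitely many tangency parameters, hence, by analyticity, a \emph{persistent} resonance $u_\lambda^as_\lambda^b=1$ — contradicting the hypothesis. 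So the non-resonance assumption enters quantitatively, not merely as a genericity device to linearize.

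A secondary but real problem is your normal form. Absence of resonances does not give holomorphic linearization of a saddle on a domain of uniform size: small divisors obstruct it, and no Baire argument repairs this in a one-dimensional $\Lambda$. The paper instead uses only finitely many non-resonance conditions (up to an order $k(\ell,\rho)$) to reach the normal form $(\star_k)$ and then Sternberg's $C^r$ linearization, flat to high order along the separatrices, which is exactly what is needed for the graph-transform estimates; your proposal would need to be rebuilt on such a normal form, and, more importantly, on the horseshoe mechanism described above.
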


Note that the notion of ``unfolding with positive speed'' really makes sense only if $\Lambda$ is 1-dimensional (see \S\ref{sec:complex_tangencies} for a thorough discussion). 
For higher dimensional families, this means that  there is a 
1-dimensional family through $\lo$ in which this property holds. Hence 
 the result is strongest when $\Lambda$ is 1-dimensional, and we will prove it in this case. We note that  in the quadratic case, the ``positive speed'' assertion of the theorem was also part of the announcement~\cite{avila-lyubich-zhang}. 
 
To understand the subtlety of this result we have to recall how secondary 
tangencies are produced from the unfolding of an initial homoclinic tangency.  The mechanism is of course very classical.  
Assume that $(f_\la)_{\la\in \La}$ is a family of local diffeomorphisms of $\cd$, and 
$p = (p_\la)$ is a fixed saddle point such that for $\la = \lo$, $W^s(p_\lo)$ is tangent to $W^u(p_\lo)$ 
at $\tau$. We can work in local coordinates $(x,y)$ where $p_\la=(0,0)$
$W^s_\loc(p_\la) = \set{x=0}$ and $W^u_\loc(p_\la)   = \set{y=0}$. Iterating $\tau$ if necessary, we may 
assume that it belongs to $W^s_\loc(p_\lo)$, so there is a branch $\Delta^u_\lo\subset W^u(\lo)$ 
tangent to $\set{x=0}$ at $\tau$. Assume that $p_\lo$ belongs to some horseshoe (in the complex case 
this is automatic, see Proposition~\ref{prop:secondary}), whose stable lamination accumulates 
$\set{x=0}$. When $\la$ moves in parameter space, the horseshoe persists and   the branch $\Delta^u_\la$ is pulled across  
 the stable lamination, so new tangencies are created (see Figure~\ref{fig:secondary}). 

\begin{figure}[h]\label{fig:secondary}
\includegraphics[width=14cm]{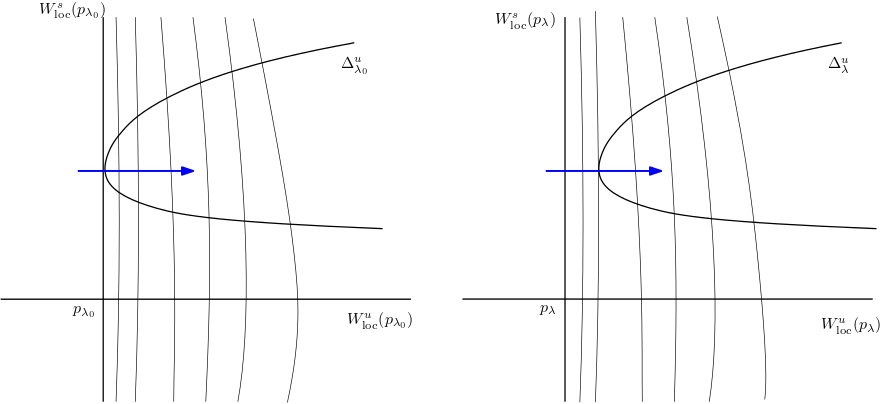}
\caption{Creation of a secondary tangency}
\end{figure}

Now imagine a toy model for this situation where the stable manifolds of the horseshoe are just vertical 
lines and $\Delta^u_\lambda$ moves under  a horizontal translation. 
In the holomorphic case one can easily imagine that the speed of motion of  
$\Delta^u_\lambda$ cannot vanish on a Cantor set of parameters (of course the reality is more 
complicated because the Cantor set of vertical lines moves with the parameter), so most tangencies should occur with positive speed. On the other hand, 
  on this toy model it is unclear why, if we start with a tangency of high order,
   the order of the secondary tangencies would generically decrease. 
The point of Takens' proof is to understand how the stable lamination of the horseshoe  %finely 
 \emph{differs} from a Cantor set of vertical lines, in order  to lower the order of tangency. In this respect, a key 
 notion is that of \emph{dynamical slope} (see \S\ref{subs:slope}).  The argument also requires delicate 
 $C^k$  estimates for these vertical graphs, for large $k$ (see \S\ref{subs:graph_transform}). 
The structure of the proof in the complex case is roughly the same as that of~\cite{takens}, 
but the  technical  details differ in many ways.  

\diamant

As a consequence of Theorem~\ref{thm:quadratic}, 
 all the phenomena associated to (one-dimensional) unfoldings of 
  generic homoclinic tangencies appear in $\Lambda$. In particular by
  % the recent work of Avila, Lyubich and Zhang 
  \cite{avila-lyubich-zhang}, if $(f_\lambda)$ is dissipative, 
  $\Lambda$ contains Newhouse domains
  with robust homoclinic 
  tangencies  and residually 
  infinitely many sinks.
  
  Another consequence is an extension and a strengthening 
  of   McMullen's ``universality of the Mandelbrot set''~\cite{mcmullen-universal}: 
 applying the   quadratic renormalization theory of~\cite{palis-takens}, it follows that in any 
 one dimensional family $\Lambda$, 
 baby  Mandelbrot-looking 
 sets (contrary to~\cite{mcmullen-universal}, we do not have to deal with  Multibrot sets) 
 appear in parameter space near any homoclinic tangency (\footnote{These are not actual copies of the Mandelbrot set, since by the aforementioned results of~\cite{avila-lyubich-zhang}, the bifurcation locus has non-empty interior.}). 
 An interpretation of  Theorem~\ref{thm:quadratic} from the point of view of the analogy with 
 one-dimensional dynamics is that  active critical points of   higher order do not exist   for 2D diffeomorphisms (at least, under a non-resonance assumption).

For polynomial automorphisms  of $\C^2$ (i.e. generalized  complex Hénon maps), 
we can get rid of the non-resonance assumption, 
at the expense of potentially choosing another periodic point. \commentaire{expo. rmk \#6: theorem A.2 becomes a corollary}
 
\begin{mcorA}\label{cor:quadratic_henon}
Let $(f_\lambda)_{\lambda\in \Lambda}$ be a holomorphic family of  
dissipative polynomial  
automorphisms  of $\C^2$ of constant dynamical degree,
 parameterized by a complex manifold $\Lambda$. 
Assume that $f_\lo$  admits a non-persistent homoclinic tangency. 
Then there exists $\lambda_1\in \Lambda$ 
arbitrarily close to $\lo$  such that $f_{\lambda_1}$ has   
 a quadratic homoclinic tangency,  unfolding with positive speed. 
\end{mcorA}
 
If $\Lambda$ is an open subset of 
 the space of all generalized Hénon maps of a given degree, by 
 Buzzard-Hruska-Illyashenko~\cite[Thm 1.4]{BHI} there is no persistent resonance between the multipliers of a given periodic point, 
 so Theorem~\ref{thm:quadratic} applies directly.  In particular no dissipativity assumption is required in this case. 
Let us also point out  that a weaker version of this result  was recently established %by Araujo and Moreira 
in~\cite[Appendix]{araujo-moreira}, in which $f_{\lo}$ is perturbed in an infinite dimensional space of entire mappings.

 For families of polynomial diffeomorphisms of $\cd$, 
it was shown in~\cite{tangencies}  that in the moderately dissipative regime 
$\abs{\jac(f)} < \deg(f)^{-2}$,  homoclinic tangencies are dense in the bifurcation locus.   
Corollary~\ref{cor:quadratic_henon} thus implies that these homoclinic tangencies 
can be chosen to be  quadratic   with positive speed. 
Putting  this together with the forthcoming results of 
Avila-Lyubich-Zhang~\cite{avila-lyubich-zhang} we obtain:

\begin{mcorA}\label{cor:ALZ}
In any holomorphic family of moderately dissipative polynomial  diffeomorphisms of $\C^2$ of a given degree, the bifurcation locus is the closure of its interior. 
\end{mcorA}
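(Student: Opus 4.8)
The plan is to obtain Corollary~\ref{cor:ALZ} by combining three ingredients: the density of homoclinic tangencies in the bifurcation locus proved in~\cite{tangencies} under the moderate dissipativity bound $\abs{\jac f_\la}<\deg(f_\la)^{-2}$; Theorem~\ref{thm:quadratic_henon}, which promotes any non-persistent homoclinic tangency to a quadratic one unfolding with positive speed; and the results of Avila--Lyubich--Zhang~\cite{avila-lyubich-zhang}, by which unfolding such a tangency in a dissipative family produces, at nearby parameters, robust homoclinic tangencies. Write $\bif = \bif(\La)$. By~\cite{tangencies}, $\bif$ is the closure of the set $T$ of parameters at which $f_\la$ has a homoclinic tangency; in particular $T\subset\bif$. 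Since $\bif$ is closed, to prove $\bif = \overline{\Int(\bif)}$ it is enough to show that $T\subset\overline{\Int(\bif)}$, as then $\bif=\overline{T}\subset\overline{\Int(\bif)}\subset\bif$.

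So let me fix $\la\in T$. If the homoclinic tangency at $\la$ is persistent, then a whole neighborhood of $\la$ is contained in $T$, hence in $\bif$, and therefore $\la\in\Int(\bif)\subset\overline{\Int(\bif)}$. Otherwise the tangency is non-persistent; since moderate dissipativity makes $(f_\la)$ dissipative --- and the family has constant dynamical degree --- the hypotheses of Theorem~\ref{thm:quadratic_henon} are in force, and it furnishes, arbitrarily close to $\la$, a parameter $\la'$ at which $f_{\la'}$ has a quadratic homoclinic tangency unfolding with positive speed. Running the construction of~\cite{avila-lyubich-zhang} along a one-dimensional family through $\la'$ realizing this unfolding, one obtains parameters arbitrarily close to $\la'$ carrying a \emph{robust} homoclinic tangency; as robustness of the tangency is an open condition that survives all holomorphic perturbations (it is attached to a hyperbolic set), the collection of such parameters is a non-empty open subset of $\La$ contained in $\bif$ --- a Newhouse domain --- which can be taken arbitrarily close to $\la'$. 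Hence $\Int(\bif)$ meets every neighborhood of $\la$, i.e. $\la\in\overline{\Int(\bif)}$. This covers both cases.

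The corollary is thus a formal consequence of Theorem~\ref{thm:quadratic_henon} together with~\cite{tangencies,avila-lyubich-zhang}, and I expect no genuinely new difficulty. The points that require care are the matching of hypotheses --- checking that the single bound $\abs{\jac f_\la}<\deg(f_\la)^{-2}$ is simultaneously what~\cite{tangencies} needs for density of tangencies and compatible with the dissipativity required by the Newhouse construction of~\cite{avila-lyubich-zhang} (both reduce to dissipativity, so this should cause no trouble), together with the handling of the persistent-tangency case --- and the observation, already recorded in the introduction, that the Newhouse domains produced by~\cite{avila-lyubich-zhang} are bona fide open subsets of parameter space sitting inside $\bif$, which is precisely the assertion that $\bif$ has non-empty interior near the relevant $\la'$.
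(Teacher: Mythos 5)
Your overall route is the one the paper intends: density of homoclinic tangencies in the bifurcation locus from~\cite{tangencies}, promotion to quadratic tangencies unfolding with positive speed via Theorem~\ref{thm:quadratic_henon}, and the Newhouse domains of~\cite{avila-lyubich-zhang} to produce interior points of $\bif$ arbitrarily close to any bifurcation parameter; combined with the closedness of $\bif$ this gives $\bif=\overline{\Int(\bif)}$. That branch of your argument is sound (for the containment of a Newhouse domain in $\bif$, the cleanest justification is that sink-creating bifurcations are dense in it by~\cite{avila-lyubich-zhang} and $\bif$ is closed).

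The gap is in your persistent case, and it originates in the way you quote~\cite{tangencies}. What that paper provides is the one-sided statement $\bif\subset\overline{T}$, where the tangencies it constructs are non-persistent (they are produced by the tangency-creation lemma at isolated parameters of one-dimensional slices); it does \emph{not} give $T\subset\bif$ when $T$ is the set of \emph{all} tangency parameters. Your step ``a whole neighborhood of $\la$ is contained in $T$, hence in $\bif$'' silently uses the implication ``persistent homoclinic tangency $\Rightarrow$ bifurcation'', which is not available here: that implication is exactly the subject of Theorem~\ref{thm:persistent}, and even there it requires substantiality together with the non-resonance hypothesis on $\la\mapsto\frac{\ln\abs{u_\la}}{\ln\abs{s_\la}}$. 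Since the corollary concerns arbitrary holomorphic families, one can take for instance a constant family whose map carries a tangency: then $T=\Lambda$ while $\bif=\emptyset$, so both your equality $\bif=\overline{T}$ and your persistent-case conclusion $\la\in\Int(\bif)$ fail. The repair is immediate and returns you to the paper's argument: quote~\cite{tangencies} as giving density in $\bif$ of \emph{non-persistent} tangency parameters (you need non-persistence anyway to invoke Theorem~\ref{thm:quadratic_henon}); then the persistent case never arises and the rest of your proof goes through unchanged.
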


\diamant

The second main result  of the paper is  that  for families of 
polynomial automorphisms of $\C^2$, persistent homoclinic tangencies also induce bifurcations.

\begin{mthmB}\label{thm:persistent}
Let $(f_\lambda)_{\lambda\in \Lambda}$ be a substantial family of  
polynomial   automorphisms  of $\C^2$ of constant dynamical degree,
 parameterized by a connected complex manifold $\Lambda$. Assume that there is 
 a  persistent homoclinic tangency associated to some saddle periodic point $p$ 
(with multipliers $u$ and $s$) such that 
 the function $\lambda \mapsto \frac{\ln \abs{u_\lambda}}{\ln \abs{s_\lambda}}$ is non-constant. 
Then $(f_\lambda)$ is not weakly $J^\varstar$-stable. 
\end{mthmB}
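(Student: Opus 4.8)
The plan is to argue by contradiction. Set $\rho(\lambda)=\ln\abs{u_\lambda}/\ln\abs{s_\lambda}$, the quantity assumed non-constant, and recall that weak $\jstar$-stability of $(f_\lambda)$ entails in particular that no $f_\lambda$ has a non-hyperbolic periodic cycle; so, assuming weak $\jstar$-stability, it is enough to exhibit one such parameter. We may assume the dynamical degree is $\ge 2$ (for degree $1$ there are no homoclinic tangencies) and, after replacing $f_\lambda$ by an iterate, that $p=p_\lambda$ is a fixed point; this does not change $\rho$. The first move is a reduction to the dissipative case, and it is here that the hypothesis on $\rho$ enters. A polynomial automorphism of $\cd$ has constant Jacobian, so evaluating $\det Df_\lambda$ at $p_\lambda$ gives $s_\lambda u_\lambda=\jac(f_\lambda)$, hence $\ln\abs{\jac(f_\lambda)}=(1+\rho(\lambda))\,\ln\abs{s_\lambda}$; since $\ln\abs{s_\lambda}<0$ this shows that $\rho\equiv-1$ exactly when $\abs{\jac(f_\lambda)}\equiv1$. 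Non-constancy of $\rho$ therefore forces $\abs{\jac(f_\lambda)}\not\equiv1$; replacing $f$ by $f\inv$ if necessary -- again a polynomial automorphism of the same constant dynamical degree, which preserves the persistent homoclinic tangency and replaces $\rho$ by $1/\rho$ -- and passing to a subdomain $U\subset\Lambda$, we may assume $\abs{\jac(f_\lambda)}<1$ on $U$. The same non-constancy forces $s_\lambda$ or $u_\lambda$ to be non-constant on $U$: the family genuinely moves near $p$.

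The second, and main, step is to analyze first-return maps near the persistent tangency. In linearizing (or normal form) coordinates near $p_\lambda$, with $W^s_{\loc}=\set{y=0}$, $W^u_{\loc}=\set{x=0}$ and the tangency iterated into $W^s_{\loc}$, persistence means that for every $\lambda\in U$ there is a transition $\psi_\lambda=f_\lambda^{n_0}$ carrying a neighborhood of a point $M^+_\lambda\in W^u_{\loc}$ onto a neighborhood of $M^-_\lambda\in W^s_{\loc}$, with $\psi_\lambda(W^u_{\loc})$ tangent to $W^s_{\loc}$ at $M^-_\lambda$; by Proposition~\ref{prop:secondary} there is also an ambient horseshoe through $p$. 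Running the classical renormalization scheme for such return maps (cf. \cite{palis-takens}) in the holomorphic category -- with the graph-transform estimates of \S\ref{subs:graph_transform} as analytic input -- one obtains, for each large $N$, a holomorphic family of first-return maps $F_{N,\lambda}=f_\lambda^{N}\rond\psi_\lambda$ which, after an affine rescaling, converge in the $C^r$ topology on a fixed bidisc (for any prescribed $r$, uniformly on compacts of $U$) to a quadratic Hénon-like model; its Jacobian parameter has modulus $\abs{\jac(f_\lambda)}^{N+n_0}\to0$, while its remaining ``position'' parameter $c_N(\lambda)$ is holomorphic in $\lambda$ and, as $N\to\infty$, sweeps over wider and wider regions of $\C$ -- this last point using that the family genuinely moves near $p$. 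One then shows that, as $\lambda$ runs over $U$, this renormalized family crosses the real-codimension-one locus where the model has a neutral periodic point -- equivalently, where $F_{N,\lambda}$, hence $f_\lambda$, has a periodic cycle carrying a multiplier on the unit circle -- so that $U$, and therefore $\Lambda$, meets $\bif^\varstar$: the desired contradiction. Moreover, since $\abs{\jac(f_\lambda)}<1$ on $U$, cycles produced near that locus have both multipliers of modulus $\asymp\abs{\jac(f_\lambda)}^{(N+n_0)/2}$, so a further small move of the parameter turns them into genuine sinks; performing this for all large $N$ -- using thickness estimates for the relevant complex Cantor sets, in the spirit of Buzzard's work -- exhibits the complex Newhouse phenomenon, robustly infinitely many sinks, on a sub-open-set of $U$, which is a second and more geometric route to the conclusion.

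The hard part is the second step, and within it the control of the position parameter $c_N(\lambda)$: one must verify that, as $\lambda$ varies over $U$, $c_N$ genuinely crosses the neutrality locus of the model rather than staying, for all large $N$, inside the hyperbolic (escaping, or attracting-fixed-point) part of the model's parameter space. This demands careful bookkeeping of the renormalized return map as a function of both $\lambda$ and $N$, and it is precisely here -- beyond the dissipative reduction of the first step -- that the variation of $\rho$, i.e. the non-triviality of the multiplier data at $p$, has to be used. The quantitative holomorphic estimates underlying the renormalization, for which \S\ref{subs:graph_transform} is designed, together with the complex thickness/gap-lemma machinery needed for the Newhouse cascade, are the technical core; the transversality and intermediate-value arguments needed to conclude once the crossing is established, and the various reductions of the first step, should not present serious difficulty.
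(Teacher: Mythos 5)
The decisive step of your scheme --- that as $\lambda$ runs over $U$ the rescaled return families $F_{N,\lambda}$ actually cross the locus where a cycle acquires a unit-modulus multiplier --- is precisely the content of the theorem, and in your write-up it is asserted rather than proved: the hypothesis that $\lambda\mapsto \ln\abs{u_\lambda}/\ln\abs{s_\lambda}$ is non-constant is only used for the easy reduction $\abs{\jac f_\lambda}\not\equiv 1$, and at the crucial point you merely say that the variation of this ratio ``has to be used''. This cannot be dismissed as bookkeeping. The paper itself recalls the Bedford--Smillie examples \cite{BS_R1,BS_R2} of one-parameter families with a persistent tangency along which the Jacobian varies and yet no sink and no new tangency is ever created; so an intermediate-value/transversality argument on the position parameter $c_N(\lambda)$ is false for real parameters, and any correct version of your renormalization route (which is essentially the Gavrilov--Shil'nikov mechanism \cite{gavrilov-shilnikov}) must use complex parameters in an essential, and so far unspecified, way --- the author explicitly defers such an adaptation, and only for quadratic tangencies, to another paper. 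Two further problems: the persistent tangency here has an arbitrary order $h\geq 1$, so even the quadratic H\'enon-like limit model you invoke is not available without an extra argument; and weak $\jstar$-stability does not say ``no non-hyperbolic cycle'' but that every periodic point keeps constant type, so exhibiting one neutral cycle at one parameter is not yet a contradiction --- you must show a change of type.

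For contrast, the paper argues by contraposition and never creates new bifurcating cycles: assuming weak stability, it adapts Palis' moduli of stability. Step 1 is local: intersecting the truncated pull-backs $\Gamma^s_n\subset W^s(p)$ with the tangent branch $\Delta^u$ produces an approximate regular $(h+1)$-gon of size $\asymp\abs{u}^{-n/(h+1)}$, and a point $r_n$ near its center satisfies $d(r_n,\Delta^u)\asymp\abs{u}^{-n}$, whence $f^{-m}(r_n)$ stays in a fixed compact subset of $W^s(p)\setminus\set{p}$ exactly when $m=\frac{\ln\abs{u}}{\ln\abs{s}}\,n+O(1)$. Step 2 transports this across parameters using the normal equivariant branched holomorphic motion of $J^+\cup J^-$ provided by weak stability \cite{tangencies}, with a Koebe/Bers--Royden distortion argument \cite{bers-royden} ensuring that the continued point $r_n(\lambda)$ remains near the center of the polygon; normality of the motion then forces $m_j=\frac{\ln\abs{u_\lambda}}{\ln\abs{s_\lambda}}\,n_j+O(1)$ at nearby parameters, so the ratio is constant, contradicting the hypothesis. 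To salvage your approach you would have to supply the missing complex-parameter crossing argument; the moduli-of-stability route sidesteps that difficulty entirely.
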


We refer to the Appendix for 
  the meaning of the word ``substantial'' and the notion of weak $\jstar$-stability from~\cite{tangencies}, which is a weak form of 
  structural stability on the Julia set. Here we content ourselves with 
pointing out  that any dissipative family is substantial by definition, and that in this case the 
failure of weak $\jstar$-stability means that some saddle bifurcates to a sink (which implies the creation of new tangencies when $\abs{\jac(f)} < \deg(f)^{-2}$). 

Since this result applies to any open subset of $\Lambda$, in the dissipative regime 
 it follows from~\cite[Cor. 4.5]{tangencies} 
that Newhouse parameters, that is parameters  displaying  infinitely many sinks,  
 are dense in $\Lambda$. Thus we obtain  an 
alternate approach to the existence of such parameters
which does not involve stable intersections of Cantor sets
(see   Yampolsky-Yang~\cite{yampolsky-yang} for yet another approach, also using~\cite{tangencies}).
 
It is quite simple to find examples of families satisfying the assumptions of 
Theorem~\ref{thm:persistent}. For instance, it is classical that 
in the space of quadratic Hénon mappings with parameters 
$(a,c)\in \C^\varstar\times \C$, 
$f_{a, c}(z,w) = (aw+z^2+c, az)$, any degenerate parameter of the form 
$(0, c)$ where $c$ is strictly post-critically finite can be continued (in infinitely many ways) 
as a 1-parameter family of Hénon maps   with a persistent tangency. 
It follows that $\set{0}\times \fr M $ (where $M$ is the Mandelbrot set)
lies in the closure of the set of Newhouse parameters. In this case  the assumption on the multipliers 
is easy to check because the Jacobian tends to zero along the  parameter curve, so $s_\lambda\to 0$ while 
$u_\lambda$ is bounded away from 0 and infinity.  
 
It is worth mentioning that  two such  curves (one homoclinic, one heteroclinic), landing at $(0, -2)$, 
were studied in detail  by Bedford and Smillie in~\cite{BS_R1, BS_R2}, in the real setting. Along these families, the Julia set $\jstar$ is contained in $\R^2$, and no sink nor  additional tangency is created as the Jacobian varies. This shows that Theorem~\ref{thm:persistent} is really about complex parameters. 
 
To get further and prove the abundance of Newhouse parameters in the bifurcation locus,
 we have to check that the assumption on the multipliers is generically satisfied, up to a change of periodic point. This is similar in spirit to Corollary~\ref{cor:quadratic_henon}, 
 but also a lot more delicate, and requires to work in the space of all 
 polynomial automorphisms of a given degree.
 
\begin{mthmB}\label{thm:newhouse_henon}
Let $\Lambda$ be an irreducible component of the space of generalized 
Hénon mappings of degree $d\geq 2$, and $\lo\in \Lambda$ be a   
parameter displaying a  homoclinic tangency. Then in any neighborhood of
 $\lo$ there is a    hypersurface $\Lambda_1\subset \Lambda$ 
 satisfying the assumptions of Theorem~\ref{thm:persistent}. 
 In particular, if $\abs{\jac f_\lo}\leq 1$,   $\lo$ belongs to 
  the closure of the set of Newhouse parameters.
\end{mthmB}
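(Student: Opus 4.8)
The plan is to produce $\Lambda_1$ as the parameter locus along which a suitably chosen homoclinic tangency persists, and to extract the non-constancy of the multiplier ratio from the wealth of periodic orbits created by the tangency. First, by Theorem~\ref{thm:quadratic_henon} we may replace $\lo$ by a nearby parameter at which $f_\lo$ displays a \emph{quadratic} homoclinic tangency unfolding with positive speed, associated with some saddle periodic point $p$. Since a quadratic tangency that detaches transversally along a disk persists along a smooth codimension-one locus (the zero set of a detachment function with non-vanishing differential), the set $H_p\subset\Lambda$ of parameters near $\lo$ for which the continuation of $p$ keeps this homoclinic tangency is a smooth hypersurface through $\lo$ along which the tangency is persistent; likewise every saddle periodic point $q$ whose invariant manifolds are quadratically tangent at $\lo$ yields such a hypersurface $H_q$. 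Any $H_q$ is a candidate for $\Lambda_1$ --- it carries a persistent homoclinic tangency for $q$, and on a dissipative open subset it is substantial in the sense of the Appendix --- so the whole content of the statement is to choose $q$ with $\lambda\mapsto \ln\abs{u_{q,\la}}/\ln\abs{s_{q,\la}}$ non-constant on $H_q$.

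One first tries $\Lambda_1=H_p$. If the ratio is non-constant there we are done; otherwise we enlarge the pool of candidates. Unfolding the quadratic tangency at $\lo$ and applying Proposition~\ref{prop:secondary}, there is a horseshoe $K$ accumulating the homoclinic orbit such that for infinitely many periodic points $q\in K$ the unstable branch $\Delta^u$ is pulled tangentially across $W^s(q)$ at parameters arbitrarily close to $\lo$, producing (after one more application of Theorem~\ref{thm:quadratic_henon} if needed) a persistence hypersurface $H_q$ meeting every neighborhood of $\lo$. This gives an infinite reservoir of candidate hypersurfaces, all arbitrarily close to $\lo$.

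The main obstacle is to rule out that the ratio is constant on \emph{all} of these candidates. I would argue by contradiction: suppose $r_q=\ln\abs{u_q}/\ln\abs{s_q}$ is constant on $H_q$ for every $q\in K$ as above. Writing $m_q$ for the period, the identity $u_qs_q=\jac(f)^{m_q}$ (valid since $\jac f$ is a constant for a Hénon map) forces, on $H_q$, the relations $\abs{s_{q,\la}}=\abs{\jac f_\la}^{m_q/(1+r_q)}$ and $\abs{u_{q,\la}}=\abs{\jac f_\la}^{m_qr_q/(1+r_q)}$; in particular $\ln\abs{u_{q,\la}}$ is a fixed real multiple of the pluriharmonic function $\ln\abs{\jac f_\la}$. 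Using that concatenating two admissible itineraries of $K$ multiplies the corresponding periodic multipliers up to factors uniformly bounded by the bounded geometry of the hyperbolic set $K$, one propagates this rigidity to all periodic orbits of $K$: the logarithmic multiplier map of $K$, normalised by the period, would degenerate to a single $\la$-independent contraction/expansion rate proportional to $\ln\abs{\jac f_\la}$, and the concatenation ``cocycle'' would carry no distortion in modulus. This is incompatible with the non-resonance theorem of Buzzard--Hruska--Illyashenko~\cite[Thm~1.4]{BHI} --- which forbids any persistent relation $u^as^b=1$ among the multipliers of a periodic point of a degree-$d$ Hénon map --- together with the non-constancy of $\jac$ on $\Lambda$. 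Making this propagation argument quantitative and identifying exactly which rigidity it yields is the technical heart of the proof; granting it, some $q\in K$ has $r_q$ non-constant on $H_q$, and we set $\Lambda_1=H_q$.

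It then remains to conclude. With $\Lambda_1$ chosen as above, Theorem~\ref{thm:persistent} applies and shows that $(f_\la)_{\la\in\Lambda_1}$ is not weakly $\jstar$-stable. Suppose moreover that $\abs{\jac f_\lo}\le 1$. If $\abs{\jac f_\lo}<1$ then $\Lambda_1$, being arbitrarily close to $\lo$, lies in the dissipative region; if $\abs{\jac f_\lo}=1$ we use that $\jac$ is non-constant and holomorphic on $\Lambda$ and may arrange (choosing $q$, or replacing $\lo$ by a nearby tangency parameter along $H_p$) that $\jac$ is non-constant along $\Lambda_1$, so that $\Lambda_1$ meets $\set{\abs{\jac}<1}$. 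In either case $\Lambda_1$ contains a non-empty open subset of the dissipative region on which the family fails to be weakly $\jstar$-stable, whence by~\cite[Cor.~4.5]{tangencies} Newhouse parameters are dense there. Since $\Lambda_1$ can be taken inside an arbitrarily small neighborhood of $\lo$, it follows that $\lo$ lies in the closure of the set of Newhouse parameters.
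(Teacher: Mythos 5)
You have the right skeleton (locate tangency hypersurfaces accumulating $\lo$, show that on at least one of them $\lambda\mapsto \ln\abs{u_\lambda}/\ln\abs{s_\lambda}$ is non-constant, then invoke Theorem~\ref{thm:persistent} and \cite[Cor.~4.5]{tangencies}), but the step you yourself flag as ``the technical heart'' --- ruling out that the ratio is constant on \emph{every} candidate hypersurface --- is exactly the content of the theorem, and the mechanism you sketch for it does not work. First, your rigidity relations $\abs{s_{q,\lambda}}=\abs{\jac f_\lambda}^{m_q/(1+r_q)}$ are only valid on the hypersurface $H_q$ attached to $q$, and these hypersurfaces are pairwise disjoint; so there is no common parameter at which the multipliers of two different periodic orbits of $K$ are simultaneously constrained, and the ``concatenation of itineraries'' argument has nothing to propagate. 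Bounded-distortion estimates for the horseshoe only give multiplicative errors bounded away from $0$ and $\infty$, which is far too weak to upgrade approximate relations into an exact degeneracy of the multiplier cocycle. Second, even if some rigidity were extracted, it would be a relation among \emph{moduli} of multipliers with real (generically irrational) exponents, holding along hypersurfaces; this is not a persistent resonance $u^as^b=1$ with integer exponents on an open set, so \cite[Thm~1.4]{BHI} does not contradict it. (The smaller gaps --- treating heteroclinic tangencies of $W^u(p)$ with $W^s(q)$ as if they were homoclinic for $q$, and the case $\abs{\jac f_\lo}=1$ --- are also left vague, but they are secondary.)

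For comparison, the paper's contradiction argument is genuinely global and of a different nature. Assuming the ratio is constant along \emph{all} tangency hypersurfaces near $\lo$, it first uses \cite[Thm~1.4]{BHI} only to make the multiplier map $\lambda\mapsto (s_\lambda(p),u_\lambda(p))$ a submersion and $u_\lambda(p)\notin\R$, which yields (via spiraling and homoclinic renormalization) that the countably many tangency hypersurfaces $\mathcal T_n$ are $\R$-Zariski dense and are \emph{leaves} of the foliation $\mathcal F(p)$ by level sets of the ratio. It then brings in a second saddle $q$ that bifurcates to a sink, proves $\mathcal F(p)=\mathcal F(q)$ by approximating the $\mathcal T_n$ by tangency hypersurfaces for $q$, and follows the special leaf $W_\beta=\set{u_\lambda(q)=e^{i\beta}}$ --- which is algebraic by elimination theory and, by the horseshoe estimates of \cite[Prop.~5.1]{BHI}, must reach the zero-Jacobian locus of the extended parameter space --- along which the persistent relation $u_\lambda(p)=s_\lambda(p)^ce^{i\alpha}$ with $c\in(-1,0)$ forces $\abs{u_\lambda(p)}\to\infty$, contradicting convergence of $f_\lambda$ to a one-dimensional map. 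None of this machinery (the two foliations, their coincidence, the algebraic continuation to $\jac=0$) appears in your proposal, and without a replacement for it the proof is incomplete.
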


To prove this, we have to rule out the   unlikely phenomenon that as soon as a 
tangency is created  near $\lo$, then,
 along the corresponding hypersurface where this new tangency 
persists, the non-resonance condition of Theorem~\ref{thm:persistent} fails. 
Even if  such a coincidence is hardly plausible, excluding  it requires some non-trivial  
arguments. In particular we make heavy    use the genericity 
 results of~\cite{BHI}. The second statement of the theorem is of independent interest: it asserts that
  if a tangency occurs at a conservative parameter, it can always be perturbed to a dissipative one (see Lemma~\ref{lem:conservative}). \commentaire{cf. Major remark \#3}

Together with~\cite{tangencies},  Theorem~\ref{thm:newhouse_henon}
 gives an alternate argument  for the following weak
version of Corollary~\ref{cor:ALZ}. 

\begin{mcorB} \label{cor:newhouse_henon}
In the   family of all moderately dissipative polynomial  diffeomorphisms of $\C^2$ of a given degree, 
the bifurcation locus is contained in 
the closure of the  set of Newhouse parameters. 
\end{mcorB}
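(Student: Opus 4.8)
The plan is to deduce Corollary~\ref{cor:newhouse_henon} from Theorem~\ref{thm:newhouse_henon}, together with the density of homoclinic tangencies in the bifurcation locus established in~\cite{tangencies}, by a short limiting argument. First I would fix $d\geq 2$ and denote by $\mathcal A_d$ the set of moderately dissipative polynomial automorphisms of $\C^2$ of degree $d$, that is, those satisfying $\abs{\jac f}<d^{-2}$; this is an open subset of the space $\mathcal H_d$ of all generalized Hénon maps of degree $d$, each irreducible component of which is a connected complex manifold to which Theorem~\ref{thm:newhouse_henon} applies.

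Next, starting from an arbitrary point $\lo$ of the bifurcation locus of the family $(f_\lambda)_{\lambda\in\mathcal A_d}$, I would invoke~\cite{tangencies}: in the moderately dissipative regime homoclinic tangencies are dense in the bifurcation locus, so there is a sequence $\lambda_n\to\lo$ for which $f_{\lambda_n}$ displays a homoclinic tangency, and since $\mathcal A_d$ is open we may assume $\lambda_n\in\mathcal A_d$, whence $\abs{\jac f_{\lambda_n}}<d^{-2}\leq 1$. Applying Theorem~\ref{thm:newhouse_henon} at each $\lambda_n$, inside whichever irreducible component of $\mathcal H_d$ contains it, then gives that $\lambda_n$ lies in the closure $\overline N$ of the set $N$ of Newhouse parameters. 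Since $\lambda_n\to\lo$ and $\overline N$ is closed, $\lo\in\overline N$; as $\lo$ was an arbitrary point of the bifurcation locus, this is the asserted inclusion.

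I do not expect a genuine obstacle in this deduction: once Theorems~\ref{thm:persistent} and~\ref{thm:newhouse_henon} and the density statement of~\cite{tangencies} are available, the argument is essentially formal. The one point that deserves a word of care is that the hypersurface $\Lambda_1$ produced by Theorem~\ref{thm:newhouse_henon} at $\lambda_n$, and the Newhouse parameters it carries, a priori live in $\mathcal H_d$ rather than in $\mathcal A_d$; but since they accumulate onto $\lambda_n\in\mathcal A_d$ and $\mathcal A_d$ is open, those sufficiently close to $\lambda_n$ are again moderately dissipative, so $\lambda_n$ is indeed a limit of Newhouse parameters of the family $(f_\lambda)_{\lambda\in\mathcal A_d}$, and the inclusion holds within that family.
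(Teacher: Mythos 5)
Your proposal is correct and is essentially the paper's intended argument: the corollary is stated as an immediate consequence of Theorem~\ref{thm:newhouse_henon} combined with the density of homoclinic tangencies in the bifurcation locus from~\cite{tangencies} in the moderately dissipative regime $\abs{\jac f}<\deg(f)^{-2}$, exactly as you deduce. Your closing remark about openness of the moderately dissipative region (and the fact that being a Newhouse parameter is an intrinsic property of the map) correctly disposes of the only point needing care.
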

 
This result is     weaker than Corollary~\ref{cor:ALZ} because,   instead of   
open sets where Newhouse parameters are residual, it only provides codimension 1 laminations. 
On the other hand, the proof is  much  simpler since it does not 
resort to the results of~\cite{avila-lyubich-zhang} (which are not yet available in print). 
We also note that  
another approach to the construction of 
Newhouse parameters was  devised by  
Martens, Palmisano and Tao~\cite{martens-palmisano-tao}.
% (see also~\cite[\S 3]{gonchenko-shilnikov_86})

It is natural to wonder whether Theorem~\ref{thm:persistent} admits a local version as in Theorem~\ref{thm:quadratic}. The statement would be: \emph{in any family of diffeomorphisms of $\Omega\subset\cd$ with a persistent homoclinic tangency, and such that 
$\frac{\ln \abs{u_\lambda}}{\ln \abs{s_\lambda}}$ is non-constant,    non-persistent tangencies are created}. There is a simple mechanism for this in the real setting, 
going back to 
the work of Gavrilov and Shil'nikov~\cite{gavrilov-shilnikov},
which depends on the signs of certain auxiliary parameters (such restrictions are
 necessary in view of the above mentioned examples of Bedford-Smillie~\cite{BS_R2}).

To prove Theorem~\ref{thm:persistent}, we take a different path 
and use the notion  of \emph{moduli of stability},  
introduced by Palis~\cite{palis_moduli} and further developed  e.g. by
  Newhouse, Palis and Takens~\cite{newhouse-palis-takens} and also by
Buzzard~\cite{buzzard_nondensity} for diffeomorphisms of $\C^2$. 
In all these references, the authors start with a 
topological conjugacy between two diffeomorphisms 
 in a neighborhood of an orbit of tangency to  deduce a differentiable rigidity of the multipliers (\cite{palis_moduli} further relies on plane topology considerations). 
 We show that this notion  can be adapted to the  context of the weak $J^\varstar$-stability 
 theory of~\cite{tangencies}, which does \emph{not} yield   topological conjugacies. As observed above, the results of~\cite{BS_R1, BS_R2} 
 show that   it is essential here to work in the complex setting. 
 A key idea is that the 
 holomorphic motion of saddle periodic points admits a natural extension to stable and unstable manifolds, which satisfy good distortion properties.  
 This provides a reasonably  simple proof of  Theorem~\ref{thm:persistent}, which takes 
 advantage of the global geometric structure of complex Hénon mappings and showcases the techniques of~\cite{tangencies}. 
  
Note that   it  should  also be 
possible to adapt the Gavrilov-Shil'nikov  mechanism to the complex setting, 
at least for quadratic tangencies. We plan to come back to this issue in a later work.  

\diamant 

\commentaire{Expo. Rmk \#7: Former Remark 1.1 expanded to a subsection of the introduction}
%Theorems~\ref{thm:persistent} and~\ref{thm:newhouse_henon} 
 Our results  bear  some    similarity with the 
 recent work of Astorg and Bianchi~\cite{astorg-bianchi} and 
 Gauthier, Taflin and Vigny~\cite{GTV}, 
where  ``higher bifurcations''  are studied 
 in spaces of regular endomorphisms of $\mathbb{P}^k(\C)$ (see also~\cite{survey_icm} for a brief account on this topic). 

More precisely, Proposition 5.2 in~\cite{astorg-bianchi} asserts that in a family of 2-dimensional polynomial skew products   over a fixed base, admitting a persistent Misiurewicz relation (that is a relation of the form $f^n(c) = p$ where $c$ is critical and $p$ is a repelling periodic point) and satisfying appropriate  tranversality conditions, then secondary Misiurewicz bifurcations are created at every parameter. Thus, Theorem~\ref{thm:persistent} is an analogue of this result for homoclinic bifurcations.

The construction in~\cite{GTV} is more complicated, and combines Misiurewicz and homoclinic mechanisms. A common point with our work is the use of smooth linearization  at saddle points to get a precise control of iterated graphs. For this, the authors use the results of Sell~\cite{sell} on $C^1$ linearization, while   Theorem~\ref{thm:quadratic}  
 requires more precise $C^k$ estimates for large $k$, for which we need to come back to the original work of Sternberg~\cite{sternberg2}. Another aspect of~\cite{GTV} is the verification of various transversality conditions in the parameter space, which is reminiscent of Theorem~\ref{thm:newhouse_henon}. 
 
More generally,  our results can 
be interpreted from the  perspective of higher bifurcations (at least in the moderately dissipative regime): 
 indeed the bifurcation  locus has some codimension 1 structure given by the hypersurfaces  
 of persistent homoclinic tangencies, and inside these hypersurfaces 
 Theorem~\ref{thm:persistent} 
   can recursively  be used to construct  new tangencies.
 One might expect that the analogue of Theorem~\ref{thm:newhouse_henon} holds 
 recursively, so that if $\La$ is a component of the space of polynomial automorphisms of given degree, 
 then  on a dense subset of the bifurcation locus there should be   $\dim(\La)$ 
 ``independent'' tangencies (compare with~\cite{berger-biebler}). 

\diamant

\subsection*{Outline}  We start in Section~\ref{sec:geometric_lemmas}
with some geometric preliminaries on submanifolds of the bidisk.  In 
Section~\ref{sec:complex_tangencies}, we  apply basic 
 ideas from  local complex geometry 
 to define and give a neat treatment of the notions of 
order, speed exponent and multiplicity of a non-persistent tangency. 
Theorems~\ref{thm:quadratic} 
and Corollary~\ref{cor:quadratic_henon} are proven in Section~\ref{sec:quadratic}. 
As explained above, the most delicate point is to produce quadratic tangencies, which requires 
$C^k$ graph transform estimates (\S\ref{subs:graph_transform}) and to develop a notion of dynamical slope (\S\ref{subs:slope}). 
In Section~\ref{sec:persistent} we study persistent tangencies and prove Theorems~\ref{thm:persistent} 
and~\ref{thm:newhouse_henon}. In the  Appendix we briefly review the notion of weak stability from~\cite{tangencies}. 
  
\subsection*{Notation and conventions} 
The unit disk in $\C$ is denoted by $\D$ and we let $\B  = \D\times \D$. 
The letter $C$ stands for a ``constant" which may change from line to line,   independently of 
some asymptotic quantity that should be clear from the context.  
%With this convention, 
We make heavy use of the following notation: we write
 $a\lesssim b$ if $\abs{a}\leq C\abs{b}$, $a\asymp b$ if $a\lesssim b \lesssim a$, and 
 $a\cong b$ if $a\sim cb$ for some $c\in \C^\varstar$.
 We denote by $\norm{\cdot}_\Omega$ the uniform norm in a domain $\Omega$. 
The eigenvalues at a saddle periodic point will be generally denoted by $u$ and $s$ (or $u_\lambda$ and $s_\lambda$ in the presence of a parameter), where it is understood that $\abs{u}>1$ and $\abs{s}<1$.

\subsection*{Acknowledgments} Thanks to Artur Avila, Misha Lyubich and Zhiyuan Zhang for their 
work on the complex Newhouse phenomenon which prompted me to finally tackle this project,
  to Sébastien Biebler for  many  discussions on this topic, and to Marc Chaperon for his help 
  with Sternberg's theory. And special thanks to the anonymous referee whose 
  thoughtful comments greatly improved the paper.

\section{Preliminary geometric lemmas}\label{sec:geometric_lemmas}

\subsection{A transversality lemma}  In this paper, a holomorphic disk is an embedding of the unit disk into $\C^2$\commentaire{cf. Minor comment \#2}.    
The following basic lemma is very useful.

\begin{lem}[see {\cite[Lem. 6.4]{bls}}] \label{lem:6.4}
Let $\Delta$ and $\Delta'$ be two holomorphic disks with an isolated tangency of order $h$ at $p$. Then if $\Delta''$ is a holomorphic disk  disjoint from  $\Delta$ and sufficiently $C^1$ close to it, 
it intersects $\Delta'$ transversally in $h+1$ points close to $p$.
\end{lem}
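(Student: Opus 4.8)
The plan is to normalise coordinates and then, instead of counting the intersections of $\Delta''$ with $\Delta'$ by a Rouché/winding argument, to rephrase the intersection equation as a fixed point problem for a holomorphic contraction; the disjointness of $\Delta''$ from $\Delta$ is precisely what makes this possible and forces the intersections to be simple. First I would pick local holomorphic coordinates $(x,y)$ at $p=(0,0)$ with $\Delta=\set{y=0}$. As $\Delta'$ is tangent to $\Delta$ at $p$, after shrinking it is a graph $\Delta'=\set{y=\phi(x)}$ over a disk $\set{\abs x<\delta_0}$ with $\phi(0)=\phi'(0)=0$; saying that the tangency has order $h$ and is isolated means exactly that $\phi(x)=x^{h+1}g(x)$ with $g(0)\neq0$. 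Changing the $x$-coordinate to $\tilde x=x\,g(x)^{1/(h+1)}$ (a local biholomorphism, since $g(0)\neq0$), and renaming $\tilde x$ as $x$, puts $\Delta'$ in the normal form $\set{y=x^{h+1}}$ while leaving $\Delta=\set{y=0}$ unchanged and altering $C^1$-distances only by bounded factors. Fix then a bidisk $\set{\abs x<\delta}\times\set{\abs y<\rho}$ with $\delta^{h+1}<\rho$ in which $\Delta$ and $\Delta'$ coincide with these two graphs, and write $\D_\delta=\set{\abs x<\delta}$.

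Next, for $\Delta''$ sufficiently $C^1$-close to $\Delta$ it is a graph $\Delta''=\set{y=\psi(x)}$ over $\D_\delta$ with $\varepsilon:=\norm{\psi}_{C^1(\overline{\D_\delta})}$ as small as one likes, and the hypothesis $\Delta''\cap\Delta=\emptyset$ reads: $\psi$ has no zero on $\D_\delta$. The points of $\Delta''\cap\Delta'$ near $p$ correspond to the solutions in $\D_\delta$ of $x^{h+1}=\psi(x)$. Since $\psi$ is zero-free on the simply connected set $\D_\delta$, it has a holomorphic $(h+1)$-st root $\psi=\chi^{h+1}$, with $\chi$ zero-free and $\smallnorm{\chi}_{\overline{\D_\delta}}\le\varepsilon^{1/(h+1)}$, so that the equation becomes $\lrpar{x/\chi(x)}^{h+1}=1$, i.e.\ $x=\omega\chi(x)$ for one of the $(h+1)$-st roots of unity $\omega$. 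For each such $\omega$ and $\varepsilon$ small, $T_\omega:=\omega\chi$ maps $\D_\delta$ into the relatively compact subdisk $\set{\abs x\le\varepsilon^{1/(h+1)}}$, hence by the Schwarz--Pick lemma (equivalently the Earle--Hamilton fixed point theorem) it is a strict hyperbolic contraction and has a unique fixed point $\xi_\omega\in\D_\delta$, with $\abs{T_\omega'(\xi_\omega)}=\abs{\chi'(\xi_\omega)}<1$. The $\xi_\omega$ are pairwise distinct, since $\xi_\omega=\xi_{\omega'}$ would give $(\omega-\omega')\chi(\xi_\omega)=0$ with $\chi(\xi_\omega)\neq0$, and they exhaust the solutions of $x^{h+1}=\psi(x)$ in $\D_\delta$ (any such $x$ is a fixed point of some $T_\omega$). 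As $\abs{\xi_\omega}\le\varepsilon^{1/(h+1)}\to0$ when $\Delta''\to\Delta$, all these intersection points lie close to $p$, so this already yields the count $h+1$, with no winding-number computation.

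It remains to check transversality. At $\xi_\omega$ the graphs $\set{y=x^{h+1}}$ and $\set{y=\psi(x)}$ are transverse iff $F'(\xi_\omega)\neq0$ where $F(x)=x^{h+1}-\psi(x)$; using $\psi=\chi^{h+1}$ and $\chi(\xi_\omega)=\omega^{-1}\xi_\omega$ one finds $F'(\xi_\omega)=(h+1)\,\xi_\omega^{h}\bigl(1-\omega^{-h}\chi'(\xi_\omega)\bigr)$, which is nonzero because $\xi_\omega\neq0$ (otherwise $\chi(0)=0$) and $\abs{\chi'(\xi_\omega)}<1$. Hence $\Delta''\cap\Delta'$ near $p$ consists of exactly $h+1$ distinct transverse points, as claimed. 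I expect this last step to be the one carrying the real content: a plain Rouché argument (which only uses the smallness of $\psi$) gives just $h+1$ intersection points counted with multiplicity, and it is the disjointness $\Delta''\cap\Delta=\emptyset$ -- exploited by extracting the $(h+1)$-st root of $\psi$ and then using the fact that the fixed point of a holomorphic self-map with relatively compact image is automatically attracting -- that promotes this to $h+1$ simple, hence transverse, intersections.
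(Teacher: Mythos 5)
Your proof is correct, and it is worth noting that the paper itself does not prove this statement: it is quoted from \cite[Lem.\ 6.4]{bls}, so there is no in-text argument to match yours against. Compared with the usual route (a Rouch\'e/Hurwitz count giving $h+1$ intersections with multiplicity, followed by a separate estimate exploiting that a nonvanishing holomorphic $\psi$ with $\abs{\psi}\leq\e$ satisfies a bound of the type $\abs{\psi'}\leq C\abs{\psi}$ on a smaller disk, so that at a solution of $x^{h+1}=\psi(x)$ one gets $\abs{\psi'(x_0)}\leq C\abs{x_0}^{h+1}<(h+1)\abs{x_0}^h$ and hence simplicity), your argument extracts an $(h+1)$-st root of the zero-free function $\psi$ and rewrites the intersection equation as $h+1$ fixed-point problems $x=\omega\chi(x)$; the Earle--Hamilton/Schwarz--Pick observation that a holomorphic self-map with relatively compact image has a unique, attracting fixed point then delivers in one stroke the existence, the exact count, the localization near $p$, and (via $\abs{\chi'(\xi_\omega)}<1$ and $\xi_\omega\neq 0$ in the identity $F'(\xi_\omega)=(h+1)\xi_\omega^h\lrpar{1-\omega^{-h}\chi'(\xi_\omega)}$) the transversality, with no winding-number computation and no Harnack-type control of $\psi'/\psi$. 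This is a genuinely different and slightly slicker mechanism, and you correctly identify the crux in both versions: it is the disjointness $\Delta''\cap\Delta=\emptyset$, i.e.\ the zero-freeness of $\psi$, that upgrades the multiplicity count to $h+1$ simple, transverse intersections (and your avoidance of any direct bound on $\chi'$, which cannot be controlled by powers of $\e$ alone, is exactly the right move). One small point to keep explicit if this were written up: the graph representation of $\Delta''$ over the fixed disk $\D_\delta$ and the requirement $\e^{1/(h+1)}<\delta$ both come from taking $\Delta''$ sufficiently $C^1$-close to $\Delta$, which is how ``sufficiently close'' enters quantitatively.
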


Observe that this result does not hold in higher dimension, due to the possibility of non-proper intersections: this is precisely  the reason why Theorem~\ref{thm:quadratic} is non-trivial (cf. the formalism of \S\ref{subs:lifting}). 
An explicit example where the higher dimensional version of this lemma fails 
is obtained by lifting the basic toy model from the Introduction to the projectivized tangent bundle. 

\subsection{Horizontal and vertical varieties in $\B$}
Recall that $\B$ is the unit bidisk. We let $\fr^h\B=\D\times \fr\D$ (resp.  $\fr^v\B=\fr\D\times \D$) be its horizontal (resp. vertical) boundary. A subvariety $V$ in 
 some neighborhood of $\overline\B$  is \emph{horizontal} (resp. \emph{vertical}) in $\B$ if 
$V\cap \fr^h\B = \emptyset$ (resp. $V\cap \fr^v\B = \emptyset$). A horizontal (resp. vertical) subvariety is a branched cover over the unit disk for the first (resp. second) 
projection so it has a \emph{degree}, which is the degree of this cover.
A horizontal  (resp. vertical) graph is a horizontal (resp. vertical) subvariety of degree 1\commentaire{cf. Minor comment \#2}.   
 If $V_1$ (resp. $V_2$) is a horizontal (resp. vertical) variety of degree $d_1$ (resp. $d_2$), then  $V_1$ and $V_2$ intersect in $d_1d_2$ points, counting multiplicity
 (see e.g. \cite{henonl} for more details on these notions). 

\begin{lem}\label{lem:riemann-hurwitz}
Let $\Delta$ be a horizontal submanifold in $\B$ of degree $d$ 
which is a union of holomorphic disks, and $(W_i)_{i\in I}$ an arbitrary collection of disjoint vertical graphs. Then the total number of tangencies between $\Delta$ and the $W_i$, counting multiplicities, 
 is bounded by $d-1$
\end{lem}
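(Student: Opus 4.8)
\emph{The plan.} I would first reduce to the case where $\Delta$ is connected. Writing $\Delta=\bigsqcup_j\Delta_j$ with $\Delta_j$ a holomorphic disk of degree $d_j$, so that $\sum_jd_j=d$, and since each tangency occurs on exactly one $\Delta_j$ (the $\Delta_j$ being disjoint), it suffices to bound the tangency count of each $\Delta_j$ with $\bigcup_iW_i$ by $d_j-1$; summing gives $\sum_j(d_j-1)=d-\#\{j\}\le d-1$. So assume $\Delta$ is a single disk of degree $d$ and parametrize it by $\gamma\colon\D\to\B$, $\gamma(t)=(\phi(t),\psi(t))$, where $\phi=\pi_1\circ\gamma\colon\D\to\D$ is proper of degree $d$. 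A proper holomorphic self-map of $\D$ is a finite Blaschke product, so $\phi$ has exactly $d-1$ critical points in $\D$, counted with multiplicity (Riemann--Hurwitz applied to $\phi\colon\D\to\D$).

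\emph{Translating tangencies.} Near a point of $W_i$ lying over $y_0\in\D$ at which $\pi_2\vert_{W_i}$ is unramified, write $W_i=\{x=g_i(y)\}$ with $g_i$ holomorphic and $|g_i|<1$ (for a $W_i$ of degree $>1$ one replaces $g_i$ by the local $x$-section, or passes to the normalization of $W_i$; the argument is unaffected, so I suppress this point). Then $H_i(t):=\phi(t)-g_i(\psi(t))$ is a bounded holomorphic function on $\D$ whose zeros are exactly the parameters with $\gamma(t)\in W_i$, and $\mathrm{ord}_{t_0}H_i$ equals the intersection multiplicity of $\Delta$ and $W_i$ at $\gamma(t_0)$; in particular a tangency of order $h$ corresponds to a zero of $H_i$ of order $h+1$, equivalently to $g_i\circ\psi$ osculating $\phi$ to order $h+1$ at $t_0$. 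Disjointness of the $W_i$ means that at each $t\in\D$ at most one $H_i$ vanishes, so the quantity to be bounded, the total tangency count $\sum_j h_j$, equals $\sum_{t_0}\bigl(\mathrm{ord}_{t_0}(\prod_iH_i)-1\bigr)$, the sum over the distinct zeros of $\prod_iH_i$.

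\emph{The analytic core.} The key input is a Schwarz--Pick rigidity: if $F\colon\D\to\D$ is holomorphic, $B$ is a finite Blaschke product of degree $k$, and $F\neq B$, then $F$ and $B$ can osculate to order at most $k$ at any interior point. One proves this by peeling: after composing with an automorphism we may assume $F(t_0)=B(t_0)=0$; if $e\in\{1,\dots,k\}$ is the ramification index of $B$ at $t_0$ then, since $F$ and $B$ agree to order $>e$, the ratio $R=F/B$ is holomorphic near $t_0$ with $R(t_0)=1$, and multiplying $R$ by the degree-$(k-e)$ Blaschke factor carrying the remaining zeros of $B$ produces a self-map of $\D$ osculating that factor to order $(\text{current order})-e$; one concludes by induction on $k$, the base case being the Schwarz lemma (a self-map of $\D$ cannot agree with an automorphism to first order at an interior point unless it equals it, and cannot fix an interior point with multiplier $1$). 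Applied with $F=g_i\circ\psi$ and $B=\phi$ (legitimate since $\phi$ is a degree-$d$ Blaschke product and $\Delta\neq W_i$ as the tangency is isolated), this already shows that each individual tangency has order at most $d-1$; it also covers, with no case distinction, tangencies whose common tangent is vertical, which correspond to $\phi'(t_0)=0$.

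\emph{From one tangency to the total count, and the main obstacle.} To get the bound on the \emph{sum}, I would refine the peeling so as to record which Blaschke factors of $\phi$ — hence which of its $d-1$ critical points — are consumed by a given tangency: a tangency of order $h$ with $W_i$ at $t_0$ should exhaust $h$ of those critical points, and the disjointness of the $W_i$ is exactly what forbids two distinct tangencies (whether with the same $W_i$ or with different ones) from charging the same critical point, so that adding the local contributions yields $\sum_jh_j\le d-1$. I expect this last step to be the real difficulty and the place where disjointness is genuinely used: it is elementary that a single horizontal disk of degree $d$ can have a tangency of order $d-1$ with one vertical disk (e.g. $\{x=y^d\}$ and a suitable vertical line), so without disjointness there is no control on the total, and the content of the lemma is precisely that the $d-1$ units of "tangency budget" carried by the ramification of $\phi$ are shared, not duplicated, among the $W_i$.
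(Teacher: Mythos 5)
Your reduction to a single disk, the translation of a tangency with a vertical graph $W_i=\{x=g_i(y)\}$ into an osculation of the Blaschke product $\phi=\pi_1\circ\gamma$ by the self-map $g_i\circ\psi$, and the peeling argument showing that a self-map of $\D$ distinct from a degree-$d$ Blaschke product cannot osculate it to order $d+1$ at an interior point are all correct; they prove that \emph{each individual} tangency has order at most $d-1$. But that is not the lemma: the statement bounds the \emph{total} tangency count over all the disjoint $W_i$ (and over several tangencies with the same $W_i$), and this is exactly the step you leave as a plan ("refine the peeling so as to record which Blaschke factors are consumed"), which you yourself flag as the real difficulty. Moreover the proposed bookkeeping is not the right mechanism: the critical points of $\phi$ are the vertical tangencies of $\Delta$, i.e.\ ramification of the projection along the \emph{vertical lines} $\{x=c\}$, whereas a tangency between $\Delta$ and a vertical graph $W_i$ generically occurs at a point where $\phi'\neq 0$ (take $\Delta$ a degree-$2$ horizontal parabola and $W_i$ a tangent line at a point with non-vertical tangent). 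So there is no local "charging" of a tangency to critical points of $\phi$; the bound is a global statement about how the disjoint graphs sit relative to $\Delta$, and nothing in the proposal extracts it. (A per-$W_i$ count is also insufficient: it would not rule out, say, three quadratic tangencies of a degree-$3$ disk with three distinct disjoint graphs.) As a side remark, the parenthetical claim that vertical disks of degree $>1$ cause no change is not tenable: a degree-$2$ vertical disk such as $\{y=4x^2\}$ is tangent at the origin to the degree-$1$ horizontal disk $\D\times\{0\}$, so the count must really be taken against vertical graphs (or disks extending to a lamination by vertical graphs), which is how the proof in the paper treats the $W_i$.

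The way the paper closes the gap is to change the projection rather than keep $\pi_1$: the union of the $W_i$ with $\fr^v\B$ is a lamination by vertical graphs, which by Slodkowski's theorem extends to a lamination of all of $\B$ by vertical graphs; projecting $\Delta$ to $\D$ along the leaves gives a proper degree-$d$ branched cover whose critical points are precisely the tangencies with the leaves (in particular with the $W_i$), with local degree equal to the contact order. Since near a critical point this projection factors as a holomorphic map composed with a quasiconformal homeomorphism, the Riemann--Hurwitz count applies and gives total branching at most $d-\chi(\Delta)\leq d-1$. This is where the disjointness of the $W_i$ is genuinely used: it is what allows them to be incorporated into a single lamination, i.e.\ a single projection to which the Euler-characteristic count can be applied. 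If you want to complete your route, you would need a substitute for this straightening step (for instance, a global argument-principle count taking all the $H_i$ simultaneously into account), not a refinement of the pointwise Schwarz--Pick rigidity.
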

 
\begin{proof}
The union of the $W_i$ and $\fr^v\B   = \bigcup_{\zeta\in \fr\D} \set{\zeta}\times \D$ is a lamination 
 by vertical graphs. By Lemma~\ref{lem:6.4}, 
 by slightly perturbing the radius of the bidisk, we may assume that 
$\Delta$ is transverse to $\fr^v\B$. If we fix a horizontal slice, for instance $L:=\D\times \set{0}$, 
 which we identify to $\D$, this lamination can be viewed as a holomorphic motion of 
 $\fr\D\cup (\bigcup W_i\cap L)$, which can be extended to a motion of $\overline \D$
by Slodkowski's theorem~\cite{slodkowski}. Note that since the motion is the identity on $\fr\D$, 
it must preserve $\D$. 
The corresponding lamination   of $\B$ by vertical graphs  fills up the whole 
bidisk. 

Let $\pi:\B\to\D$ be the projection along this lamination. Since $\Delta$ intersects any vertical graph in 
$d$ points, $\pi\rest{\Delta}:\Delta\to \D$ is a branched cover of degree $d$.
 If we can show that  this branched cover
 satisfies the Riemann-Hurwitz formula, then the total number of tangencies, counting 
multiplicity, is at most $(d-1)$, and the lemma follows. To prove this fact, we first note that by 
Lemma~\ref{lem:6.4}, only finitely many $W_i$ are tangent to $\Delta$. They correspond to the 
critical points of $\pi\rest{\Delta}$. 
Then we argue as in the 
usual proof of the Riemann-Hurwitz formula, 
by pulling back a triangulation of the base whose vertex set 
contains the critical values,   and computing the Euler characteristic of the pulled-back triangulation (which is equal to the number of components of $\Delta$). The only delicate point is to show that at the critical points, $\pi$ behaves topologically like a holomorphic map: this follows from the fact that near such a point, $\pi$ is of the form 
$u\circ k$, where $u$ is a holomorphic map and $k$ is a quasiconformal homeomorphism 
(see~\cite[p. 590]{approx}).  
\end{proof}

\begin{lem}\label{lem:vertical_tangencies}
Let $\Delta$ be a horizontal submanifold in $\B$ of degree $d$ 
which is a union of holomorphic disks. 
Assume that there is a vertical graph $W$  in $\B$ 
which is tangent to order $d-1$ to $\Delta$. 
Then $\Delta$ has $d$ vertical tangencies in $\B$
\end{lem}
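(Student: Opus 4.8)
The plan is to run the Riemann--Hurwitz bookkeeping behind Lemma~\ref{lem:riemann-hurwitz}, but with the single leaf $W$ in the role of the laminated family, and to observe that the hypothesis makes that count sharp; this pins down the geometry of $\Delta$ and the conclusion follows by inspection. Write $p$ for the point at which $W$ is tangent to $\Delta$ to order $d-1$.

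First I would extend $W$ to a lamination $\mathcal{F}$ of $\B$ by vertical graphs admitting $W$ as one of its leaves, by repeating the construction in the proof of Lemma~\ref{lem:riemann-hurwitz}: after slightly perturbing the radius of $\B$ so that $\Delta$ is transverse to $\fr^v\B$, one regards $\{W\}\cup\fr^v\B$ as a holomorphic motion of a horizontal slice and extends it to a lamination of $\B$ by Slodkowski's theorem. Let $\pi$ be the projection of $\B$ along $\mathcal{F}$. Then $\pi\rest{\Delta}\colon\Delta\to\D$ is a branched cover of degree $d$ whose critical points, counted with multiplicity, are precisely the tangencies of $\Delta$ with the leaves of $\mathcal{F}$; and the Euler-characteristic computation of that proof (triangulating the base through the critical values, pulling back, and using near a critical point the factorization of $\pi$ as a holomorphic map composed with a quasiconformal homeomorphism) yields that their total number equals $d-\chi(\Delta)=d-\#\{\text{connected components of }\Delta\}$.

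Now the tangency at $p$, being of order $d-1$ with the leaf $W$, contributes $d-1$ to this total by itself. Hence $\Delta$ has a single component, so it is one holomorphic disk; $p$ is the \emph{only} tangency of $\Delta$ with any leaf of $\mathcal{F}$; and its order is exactly $d-1$. In particular $\Delta$ meets $W$ in $d$ points all concentrated at $p$, and counting this maximal tangency through the $d$ transverse intersection points into which it unfolds under a generic perturbation -- the convention of Lemma~\ref{lem:6.4} -- we obtain exactly $d$ vertical tangencies of $\Delta$ in $\B$. As a consistency check, $\Delta$ being a disk, Riemann--Hurwitz applied to the first-coordinate projection $\pi_1\rest{\Delta}$, again a degree-$d$ branched cover of the disk, shows that the vertical tangencies of $\Delta$ have total order $d-1$, in agreement with the count above.

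I expect the one genuinely delicate point to be this final bookkeeping: degenerate tangencies must be weighted in the unfolding sense of Lemma~\ref{lem:6.4} (an order-$h$ tangency counting for $h+1$), and one must verify that passing between the lamination $\mathcal{F}$ and the vertical-line foliation neither creates nor destroys multiplicity. Everything else is a direct application of the machinery already developed for Lemma~\ref{lem:riemann-hurwitz}.
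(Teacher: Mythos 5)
Your first half is correct, though heavier than necessary: to see that $\Delta$ is a single disk you do not need Slodkowski's theorem at all. Since $W$ is a vertical graph and every component of $\Delta$ is horizontal of positive degree, every component meets $W$; but the total intersection number of $W$ with $\Delta$ is $d$ and it is entirely absorbed by the order-$(d-1)$ contact at $p$, so $W\cap\Delta=\set{p}$, and the disjoint disks making up $\Delta$ cannot all pass through $p$ unless there is only one of them. This is exactly the paper's argument; your laminated version proves the same thing (plus the fact, not needed, that $p$ is the only tangency of $\Delta$ with a leaf of $\mathcal F$).

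The genuine gap is the final step, where you pass from the tangency with $W$ to the \emph{vertical} tangencies of $\Delta$. A tangency between $\Delta$ and the vertical graph $W$ is not a vertical tangency: the common tangent direction at $p$ is the tangent of $W$, which in general is not vertical; and the $d$ transverse intersection points into which the contact with a nearby copy of $W$ unfolds (Lemma~\ref{lem:6.4}) are intersection points, not vertical tangencies, so ``counting the maximal tangency through these $d$ points'' does not produce vertical tangencies of $\Delta$. In particular nothing in your argument shows that the vertical tangencies are concentrated at $p$, and in general they are not: in the very application of the lemma (proof of Lemma~\ref{lem:lambda_n}) the degree-$(h+1)$ manifold has its vertical tangencies, of total multiplicity $h=d-1$, spread over several distinct points of the tube, and that total multiplicity is the count actually used there. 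The correct derivation is precisely what you relegate to a ``consistency check'': once $\Delta$ is known to be a single disk of horizontal degree $d$, Riemann--Hurwitz applied to the first-coordinate projection $\pi_1\rest{\Delta}:\Delta\to\D$, a proper degree-$d$ branched cover of a disk by a disk, shows that its critical points, i.e.\ the vertical tangencies, have total multiplicity $d-\chi(\Delta)=d-1$ (an order-$m$ vertical tangency being responsible for $m+1$ colliding sheets); this does not ``agree'' with your figure of exactly $d$ obtained by unfolding, except in the special case where all the ramification sits at a single point, which you have not shown and which need not hold. So what you present as a check is the actual proof of the second half, and what you present as the proof of that half is a non sequitur.
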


\begin{proof}
Fix $R<1$ such that $W$ is vertical in $R\D\times \D$. 
Since the number of vertical tangencies of $\Delta$ is finite, there exists $R\leq R'\leq 1$ such that 
$\Delta$ is transverse to $\fr (R'\D)\times \D$. Without loss of generality we replace $\D$ by $R'\D$. 
Each component of $\Delta$ is a horizontal submanifold, hence it intersects $W$. 
Since $W\cap \Delta$ admits  $d$ points counting multiplicities, we infer that 
$W\cap \Delta$ is reduced to the tangency point, of multiplicity $d$. Hence 
  $\Delta$ is made of a single component, and the result follows from the Riemann-Hurwitz formula applied to the first projection. 
\end{proof}

 \subsection{Tangency creation lemma}
 
\begin{lem}[see {\cite[Prop. 9.1]{tangencies}}]\label{lem:creating_tangencies}   
Let $(\Delta_\lambda)_{\lambda\in \Lambda}$ be a holomorphic family of horizontal submanifolds of 
degree $d$ in $\B$, with $\Lambda\simeq\D$. Assume that for $\lambda$ close to $\fr\Lambda$, $\Delta_\lambda$ is a union of horizontal graphs, and that  this property does not hold for some $\lambda_0\in \Lambda$. 

Let $(W_\lambda)_{\lambda\in \Lambda}$ be a holomorphic family of vertical graphs in $\B$. Then 
there exists a non-empty finite set of parameters such that $\Delta_\lambda$ and $W_\lambda$ are tangent. 
\end{lem}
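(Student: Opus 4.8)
The plan is to read the tangency parameters off a discriminant function on $\Lambda$, apply the argument principle, and then identify the resulting count with a number which only records that $\Delta_{\lo}$ fails to be a union of graphs. We may assume $d\geq 2$, the hypotheses being vacuous otherwise.

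\emph{The discriminant, and finiteness.} Being horizontal of degree $d$, $\Delta_\lambda$ is cut out by an equation $P_\lambda(x,y)=y^d+a_{d-1}(\lambda,x)y^{d-1}+\dots+a_0(\lambda,x)=0$ with holomorphic coefficients. Writing $W_\lambda=\{x=h_\lambda(y)\}$ and identifying $W_\lambda$ with $\D$ through the second projection, $\Delta_\lambda\cap W_\lambda$ is the zero set of $Q_\lambda(w):=P_\lambda(h_\lambda(w),w)$; since $\Delta_\lambda$ is horizontal and $W_\lambda$ is vertical, $Q_\lambda$ has $d$ zeros in $\D$ and none on $\fr\D$, for every $\lambda$. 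A contour-integral argument then gives $Q_\lambda=U_\lambda\cdot R_\lambda$ with $U_\lambda$ non-vanishing on $\overline\D$ and $R_\lambda$ monic of degree $d$ with coefficients holomorphic in $\lambda$, so $D(\lambda):=\mathrm{disc}_w(R_\lambda)$ is holomorphic on $\Lambda$, and $D(\lambda)=0$ iff $R_\lambda$ (equivalently $Q_\lambda$) has a multiple zero, iff $\Delta_\lambda$ and $W_\lambda$ are tangent at a point of $\B$. When $\lambda$ is close to $\fr\Lambda$, $\Delta_\lambda$ is a union of $d$ disjoint horizontal graphs, each of which meets $W_\lambda$ in a single transverse point (one intersection point, by the degree formula for horizontal and vertical varieties in Section~\ref{sec:geometric_lemmas}); as these graphs are disjoint, $\Delta_\lambda\cap W_\lambda$ consists of $d$ distinct points and $D(\lambda)\neq 0$. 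Hence $D\not\equiv 0$ and $D$ is non-vanishing outside a compact subset of $\Lambda\simeq\D$, so its zero set is finite. This establishes the finiteness assertion and reduces the lemma to showing that $D$ has a zero.

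\emph{A universal count.} Fix $r<1$ (in the identification $\Lambda\simeq\D$) such that $\Delta_\lambda$ is a union of $d$ graphs whenever $\abs\lambda\geq r$, and set $\delta(\lambda,x):=\mathrm{disc}_y P_\lambda(x,\cdot)$; this is holomorphic and not identically zero, and $\delta(\lambda,x)=0$ means that $\Delta_\lambda$ has a vertical tangent at a point lying over $x$. By the argument principle the number of zeros of $D$ in $\Lambda$ (all of which lie in $\{\abs\lambda<r\}$) equals the winding number of $D$ along $\{\abs\lambda=r\}$. Deforming $W_\lambda$ inside the family of vertical graphs, by replacing $h_\lambda$ with $(1-t)h_\lambda$ for $t\in[0,1]$, yields a homotopy of the associated discriminants which never vanishes on $\{\abs\lambda=r\}$ --- there $\Delta_\lambda$ is a union of graphs, hence transverse to every vertical graph --- so that winding number equals the winding number of $\lambda\mapsto\delta(\lambda,0)$. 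The same deformation, with $0$ replaced by an arbitrary $x'\in\D$ (moving along the segment $[0,x']$), shows it also equals the number $N(x')$ of zeros of $\delta(\cdot,x')$ in $\{\abs\lambda<r\}$, for every $x'\in\D$. Thus all these integers coincide with a single number $N$, and it remains to prove $N\geq 1$.

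\emph{Conclusion.} Since $\Delta_{\lo}$ is not a union of $d$ graphs it has a vertical tangent at a point over some $x_0\in\D$, i.e. $(\lo,x_0)$ lies on the curve $\mathcal Z:=\{\delta=0\}\subset\Lambda\times\D$. Let $C_0$ be an irreducible component of $\mathcal Z$ through $(\lo,x_0)$. Then $C_0$ is not contained in any slice $\Lambda\times\{c\}$: indeed $\mathcal Z$ is disjoint from $\{\abs\lambda\geq r\}\times\D$ by the choice of $r$, whereas $\Lambda\times\{c\}$ is not. Therefore the second projection is non-constant, hence open, on $C_0$, and its image is a non-empty open set $U\subseteq\D$; for $x'\in U$ there is $\lambda_1$ with $(\lambda_1,x')\in C_0$, necessarily with $\abs{\lambda_1}<r$, so $N=N(x')\geq 1$. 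Since $N$ also equals the number of zeros of $D$, some parameter makes $\Delta_\lambda$ tangent to $W_\lambda$, which finishes the proof. The delicate step is this last one: topology alone does not force a tangency --- the components of $\Delta_\lambda$ can merge as $\lambda$ varies without ever producing a tangency with a prescribed $W_\lambda$ --- so holomorphicity must be used through the argument principle, the crux being that the tangency count is a homotopy invariant independent of the family $(W_\lambda)$, which transfers the problem to the elementary geometry of the branch locus $\{\delta=0\}$.
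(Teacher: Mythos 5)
Your proof is correct, and it is in substance an independent one: the paper does not reprove this lemma but quotes it from \cite[Prop.~8.1]{tangencies} (adding only the remark that finiteness is implicit in that proof), so there is no in-paper argument to match yours against. Your route — encode $\Delta_\lambda$ by a fiberwise Weierstrass polynomial $P_\lambda(x,\cdot)$, restrict to the vertical graph to get a discriminant $D(\lambda)$ whose zeros are exactly the tangency parameters, check non-vanishing near $\fr\Lambda$ (finiteness), and then compute the total number of zeros by the argument principle plus a homotopy of $(W_\lambda)$ through vertical graphs down to a vertical line $\{x=x'\}$ — is sound: the key non-vanishing of the deformed discriminants on $\{\abs{\lambda}=r\}$ holds because a union of disjoint horizontal graphs is transverse to \emph{every} vertical graph, and the endpoint discriminant is exactly $\delta(\lambda,x')=\mathrm{disc}_y P_\lambda(x',\cdot)$, whose zeros detect vertical tangencies. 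Two small remarks. First, your final paragraph can be shortened: since $\Delta_{\lo}$ has a vertical tangent over some $x_0$ and $\delta(\cdot,x_0)$ is non-vanishing on $\{\abs{\lambda}=r\}$, you may take $x'=x_0$ directly and get $N=N(x_0)\geq 1$ from the zero at $\lo$; the detour through an irreducible component of $\{\delta=0\}$ and openness of the projection is correct but unnecessary (and the step ``$C_0$ contained in a slice'' silently uses that an irreducible curve contained in the irreducible slice must equal it). Second, you implicitly use that each $\Delta_\lambda$ is embedded: this is what guarantees that a multiple zero of $Q_\lambda$, resp.\ of $P_\lambda(x',\cdot)$, comes from a genuine tangency, resp.\ a vertical tangent, and not from two branches of $\Delta_\lambda$ crossing at a point of $W_\lambda$; this is consistent with the stated hypotheses (``submanifolds''), but worth flagging since the lemma is applied to pieces of unstable manifolds. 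A pleasant by-product of your argument, beyond what the statement asks, is that the number of tangency parameters counted with multiplicity is independent of the family $(W_\lambda)$ and equals the number of parameters with a vertical tangency of $\Delta_\lambda$ over any fixed base point — a counting statement in the spirit of the reference to \cite{taflin-tangencies} made after the lemma.
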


The finiteness of the set of tangency parameters was not stated in 
 \cite[Prop. 9.1]{tangencies} but it is explicitly mentioned in its proof 
(see also~\cite{taflin-tangencies} for a counting of the number of tangency parameters).

\section{Complex geometry of homoclinic tangencies}\label{sec:complex_tangencies}

\subsection{Conventions}\label{subs:conventions} ~\commentaire{Lots of changes in this subsection, in preparation for \S 4.3 (cf. Exposition remark \#2}
Let $\Omega\subset \C^2$ be an open set and $f:\Omega \to f(\Omega)\subset \C^2$ be a diffeomorphism with a saddle fixed point $p$, with local stable and unstable manifolds 
$W^{s/u}_\loc(p)$. 
In such a semi-local setting, the global stable  
  manifold $W^{s}(p)$ is the set of points $x\in\Omega$ such that $f^{n}(x)$ belongs to $\Omega$ for every $n\geq 0$ and eventually falls into $W^{s}_\loc(p)$, and likewise for $W^u(p)$. 
  
Assume that there is a  tangency between $W^s(p)$ and $W^u(p)$ at $\tau$. 
We will generally
 use the following normalization: replacing the tangency point by an iterate  
 one may assume that $\tau\in W^s_\loc(p)$;
we pick local coordinates $(x,y)\in \B$ 
such that $p=0$, $f(x,y)  = (ux, sy)+ \hot$ and 
$W^s_\loc(p) = \set{x=0}$, so that $\tau= (0, y_0)$. We assume that $\B$ is small enough  so that $f$ uniformly expands the horizontal direction and contracts the vertical direction in $\B$ (in particular it is a Hénon-like mapping of degree 1 in the sense of \cite{henonl}).
The component  of $\tau$ in $W^u(p)\cap \B$, 
denoted $\Delta^u$, is locally of the form $x = \varphi(y)$, where 
$\varphi$  a holomorphic function defined in a neighborhood of $y_0$, with 
$\varphi(y_0) = \varphi'(y_0) = 0$ (see Figure~\ref{fig:secondary}).

We say that the tangency is \emph{of order $h$} if the order of contact --which must be  finite in the holomorphic setting-- 
equals $h+1$, that is  
$\varphi(y)\cong (y-y_0)^{h+1}$ as $y\to y_0$ (recall that this means $\varphi(y)\sim
c (y-y_0)^{h+1}$ for some $c\neq 0$).   So a quadratic tangency is a tangency of order 1. 

Since for large $n$, $f^{-n}(\B)\cap B$ is a thin tube around $W^s_\loc(p)$, the component of $\tau$ in 
  $\Delta^u\cap f^{-n}(\B)$ is contained in the graph of $\varphi$. It follows that replacing 
  $\Delta^u$ by the cut-off iterate $f^n(\Delta^u)\cap \B$, $\Delta^u$ is a horizontal disk in $\B$ 
  with $\Delta^u\cap W^s_\loc(p) = \set{\tau}$, hence its horizontal degree equals $h+1$.

Now let $f$ depend holomorphically on a parameter $\lambda\in \Lambda$, so that all the above defined 
objects depend holomorphically on $\lambda$, 
and are denoted by $f_\lambda$, $p_\lambda$, etc. 
In this context, the notation  $p$ stands 
 for the holomorphically moving  family  $(p_\lambda)_{\lambda\in \Lambda}$.
The parameter space $\Lambda$ is typically the unit disk,
but for clarity we keep the notation $\Lambda$. Again we   choose  local 
 coordinates $(x,y)$ such that $p_\lambda = (0,0)$ and $W^s_\loc(p_\lambda) = \set{x=0}$. 
The tangency parameter is denoted by $\lambda_0$, and without loss of generality
 we often assume $\lambda_0 = 0$. 
%If the  study is local near $\lambda_0$,   we may reduce $\Lambda$ without further notice. 

\begin{normalization}\label{norm:tang}\commentaire{We make the normalization more explicit and precise for future reference (notably in \S 4.3)}
There are local coordinates $(x,y)\in \B$ such that the 
 holomorphic family $(f_\lambda)_{\lambda\in \Lambda}$ and the parameter $\lo$
satisfy  the following properties :
\begin{itemize}
\item $p_\lambda = (0,0)$ is a saddle fixed point of $f_\lambda$.
\item $W^s_\loc(p_\lambda) = \set{x=0}$, $W^u_\loc(p_\lambda) = \set{y=0}$.
\item At  $\lambda = \lo$ there is a tangency between  $W^u(p_\lambda)$ 
and $W^s_\loc(p_\lambda)$ at $\tau = (0, y_0)$. 
\item For $\lambda$ close to $\lambda_0$, there is a component 
$\Delta^u_\lambda$ of $W^u(p_\lambda)\cap \B$,  
which is a horizontal disk of degree $h+1$  in $\B$ 
depending holomorphically  on $\lambda$. For  $\lambda = \lambda_0$,  the only intersection point 
between $\Delta^u_{\lambda_0} $ and $W^s_\loc(p_{\lambda_0})$ is $(0,y_0)$, and  
near $(0,y_0)$, 
$\Delta^u_\lambda$ is expressed as a graph of the form 
$x = \varphi(\lambda, y)$, where $\varphi(\lambda, \cdot)$ is a holomorphic function defined in a neighborhood of $y_0$ and depending holomorphically  on $\lambda$.   
\end{itemize}
\end{normalization}
 
Any family of semi-local holomorphic diffeomorphisms with a homoclinic tangency, 
can be put under this  form by reducing the 
domain of definition, choosing appropriate coordinates, 
and considering a suitable iterate of the tangency point, as explained above.  
Note that the last statement that $\Delta^u_\lambda$
 is locally a graph over the second coordinate 
is an immediate consequence of the existence of a tangency.

The tangency is \emph{non-persistent} if
$\Delta^u_\lambda$ is transverse  to $W^s_\loc(p_\lambda)$ for some $\lambda\in \Lambda$. 
Note that if $\Lambda$ is the unit disk, this property will then hold for every $\lambda\neq\lambda_0$   
close to $\lambda_0$. By the persistence of 
proper intersections, $\Delta^u_\lambda$ must have $h+1$  transverse intersections with  $W^s_\loc(p_\lambda)$ for  such  $\lambda$.   
 Note also that as soon as $\Delta^u_\lambda$ remains horizontal, by the persistence of the intersection number with a given vertical line,  its degree remains constant and equal to $h+1$.

It is not so easy in the smooth category 
to define a formal notion 
of ``speed of motion'' for the tangency: first, when $\lambda\neq\lambda_0$, there is no homoclinic
 tangency to work with. 
A common idea is to consider   vertical tangencies of $\Delta^u_\lambda$ as a kind  of 
 ``virtual'' tangency, and look at the speed of motion of these tangencies (\footnote{Then, an additional argument would be required to show that this notion is intrinsic, in a given regularity 
 class for  the family $(f_\lambda)$.}).
In the complex setting, when $h>1$ the tangency usually splits into $h$ vertical 
tangencies, which  may or may not be followed holomorphically. The good news is that there is 
still a unequivocal notion of a \emph{generically unfolding} or ``positive speed'' tangency, 
as we will explain in \S\ref{subs:speed} below.  

\begin{rmk}\label{rmk:semi-continuity-persistent}
Note that for persistent tangencies,  it may be the case 
that for $\lambda=\lo$ the tangency is of order $h$, while for $\lambda\neq \lo$ 
it splits off into a persistent tangency of order $h'<h$ together with $h - h'$ vertical tangencies.
\end{rmk}

\subsection{Lifting to the projectivized tangent bundle} \label{subs:lifting}
The following viewpoint was developed in \cite[\S 9.1]{tangencies}. Consider a family of two holomorphically varying smooth complex submanifolds 
$(V_\la)_{\lambda\in \Lambda}$ and $(W_\la)_{\lambda\in \Lambda}$ in $\B$, with $\Lambda = \D$. 
For every $\la$, we denote by $\P T V_\lambda$ (resp. $\P T W_\lambda$) the lift 
of $V_\lambda$ (resp. $W_\lambda$) to the projectivized tangent bundle $\P T\B \simeq \B \times \pu$. Finally, we define a 2-dimensional subvariety 
 $\widehat {\P T V}$ of $\Lambda\times \B \times \pu$ by putting together the $\P T V_\lambda$, that 
 is $\widehat {\P T V} \cap \set{\lambda}\times \B \times \pu = \P T V_\lambda$, and likewise 
 for $\widehat {\P T W}$. 
  A non-persistent tangency between $V_0$ and $W_0$ then corresponds to an isolated   intersection of $\widehat {\P T V}$ and $\widehat {\P T W}$. Let us denote by $\widehat 0$ the corresponding intersection point.  Since in this 
 case $$\codim_{\widehat 0} 
 \lrpar{\widehat {\P T V}\cap \widehat {\P T W}}  = \codim_{\widehat 0}  \lrpar{\widehat {\P T V}} + 
 \codim_{\widehat 0}  \lrpar{\widehat {\P T W}}, $$ this intersection is proper so it has nice properties, in particular the intersection multiplicity $\mult_{\widehat{0}}\lrpar{\widehat {\P T V}, \widehat {\P T W}}$ is well-defined (see \cite[\S 12]{chirka}). By definition this number is the \emph{multiplicity} of the tangency. 
 
\begin{lem}\label{lem:multiplicity}
With notation as in \S \ref{subs:conventions}, $\widehat{\P TW^s}$ and 
$\widehat{\P T\Delta^u}$ are smooth, and   
\begin{equation}\label{eq:multiplicity}
\mult_{\widehat{0}}\lrpar{\widehat {\P T W^s}, \widehat {\P T \Delta^u}}\geq h ,
\end{equation}
where $h$ is the order of tangency.  
\end{lem}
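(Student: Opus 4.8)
The plan is to work in the local coordinates $(x,y)$ fixed in \S\ref{subs:conventions}, where $W^s_\loc(p_\lambda)=\set{x=0}$ for all $\lambda$, and $\Delta^u_\lambda$ is the graph $x=\varphi_\lambda(y)$ with $\varphi_0(y)\cong(y-y_0)^{h+1}$. First I would write down explicit coordinates on $\P T\B\simeq\B\times\pu$. Near the tangency point the relevant tangent directions are close to vertical, so I use the affine chart on $\pu$ given by the slope $m=dx/dy$ (i.e. the line $\C\cdot(m,1)$). In this chart, the lift $\widehat{\P T W^s}$ is simply $\set{x=0,\ m=0}$ as $(\lambda,y)$ vary — a smooth graph over $(\lambda,y)$-space, hence smooth. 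Similarly $\widehat{\P T\Delta^u}$ is parameterized by $(\lambda,y)\mapsto(\lambda,\varphi_\lambda(y),y,\varphi_\lambda'(y))$, again manifestly a smooth graph over $(\lambda,y)$, so both lifts are smooth. This disposes of the first assertion.

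**Computing the intersection multiplicity.** The intersection $\widehat{\P T W^s}\cap\widehat{\P T\Delta^u}$, read in these coordinates, is cut out inside the smooth surface $\widehat{\P T\Delta^u}\cong\Lambda\times(\text{$y$-disk})$ by the two equations $\varphi_\lambda(y)=0$ and $\varphi_\lambda'(y)=0$. So $\mult_{\widehat 0}$ equals the intersection multiplicity at $(0,y_0)$ of the two plane curves $\set{\varphi_\lambda(y)=0}$ and $\set{\partial_y\varphi_\lambda(y)=0}$ in the $(\lambda,y)$-plane (this uses that intersection multiplicity is preserved under the biholomorphism onto $\widehat{\P T\Delta^u}$, cf. \cite[\S 12]{chirka}). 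To bound this from below by $h$, I would restrict to the line $\set{\lambda=0}$, i.e. compute the multiplicity of $\varphi_0(y)$ and $\varphi_0'(y)$ as functions of $y$ alone: since $\varphi_0$ vanishes to order exactly $h+1$ at $y_0$, it has a zero of order $h+1$ there and $\varphi_0'$ a zero of order $h$. The key point is the standard lower-semicontinuity/monotonicity of intersection multiplicity: for any curve germ the intersection number with a second curve is at least the intersection number of its restriction to a generic (or any) line through the point, or more directly, $\mult_{\widehat 0}(\set{\varphi=0},\set{\partial_y\varphi=0})\ge\mult_0(\varphi_0,\partial_y\varphi_0\mid_{\lambda=0})$ because setting $\lambda=0$ can only drop the codimension of the common zero set. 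Concretely: $\mult_{\widehat 0}\ge\dim_\C \C\{\lambda,y\}/(\varphi,\partial_y\varphi)\ge\dim_\C\C\{y\}/(\varphi_0,\partial_y\varphi_0)=\min(h+1,h)=h$, which is \eqref{eq:multiplicity}.

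**Main obstacle.** The delicate point is making the comparison "$\mult$ in the $(\lambda,y)$-plane $\ge$ $\mult$ after setting $\lambda=0$" airtight. One has to be careful that $\varphi_\lambda(y)$ and $\partial_y\varphi_\lambda(y)$ need not have isolated common zero along $\lambda=0$ as curve germs in two variables unless one knows the intersection is proper — but properness is exactly the non-persistence hypothesis, which guarantees $\widehat{\P T W^s}\cap\widehat{\P T\Delta^u}$ is a finite set near $\widehat 0$, hence $\varphi$ and $\partial_y\varphi$ have no common component. Given that, the cleanest route is either (i) the algebraic inequality $\dim_\C\mathcal O/(F,G)\ge\dim_\C(\mathcal O/\lambda)/(\bar F,\bar G)$ for a regular element $\lambda$ modulo which $(F,G)$ stays zero-dimensional, or (ii) a direct Weierstrass-preparation argument writing $\varphi_\lambda(y)=u(\lambda,y)\prod_{i=1}^{h+1}(y-a_i(\lambda))$ (Puiseux branches) near $\widehat 0$ and counting how the $h+1$ zeros of $\varphi_\lambda$ and $h$ critical points interleave — the tangency multiplicity being at least the number of these branches forced to collide at $\lambda=0$, which is $h$. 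I would present (i) as it is shortest, invoking the standard properties of intersection multiplicity for proper intersections from \cite{chirka}. Everything else is bookkeeping in the slope chart.
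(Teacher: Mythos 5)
Your argument is correct and is essentially the paper's own proof: you use the same slope chart to see both lifts as smooth graphs over $(\lambda,y)$, reduce the multiplicity to that of the plane curves $\set{\varphi_\lambda(y)=0}$ and $\set{\partial_y\varphi_\lambda(y)=0}$ via \cite[\S 12]{chirka}, and bound the colength from below. The only cosmetic difference is in the last step: the paper asserts the linear independence of $1,t,\dots,t^{h-1}$ in $\mathcal{O}(\cd,0)/\langle\varphi,\partial_y\varphi\rangle$, which is exactly your quotient-by-$\lambda$ argument made explicit, and your appeal to non-persistence to ensure the common zero is isolated matches the paper's parenthetical remark.
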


\begin{proof} 
The multiplicity in \eqref{eq:multiplicity} can be computed by intersecting two curves, as follows. Recall that we fixed local  coordinates such that $W^s_\loc(p_\lambda) = \set{x=0}$, and 
$\Delta^u_\lambda = \set{x= \varphi_\lambda(y)}$. Write $y=y_0+t$ and $\varphi_{y_0}(\lambda, t ) :=\varphi(\lambda, y_0+t)$ so that near the point of tangency, $\Delta^u_\lambda$ is parameterized by $(  \varphi_{y_0}(\lambda, t ),y_0+t )$, whose tangent vector 
is $(\dot{\varphi}_{y_0}(\lambda, t),1)$, 
where the dot denotes derivation with respect to the $t$-variable. In 
$\P T\C^2\simeq\P^1$, denote by $w$ the coordinate in 
 the affine chart containing $[0, 1]$. Thus we have local coordinates near $\widehat 0$ in
 $(\lambda, t, x, w)$   in 
 $\Lambda\times \B \times \pu$, in which we have the equations 
 $$\begin{cases} x= \varphi_{y_0}(\lambda, t) \\ w  = \dot{\varphi}_{y_0}(\lambda,  t) \end{cases} \text{ for } 
 \widehat {\P T \Delta^u} \ \text{ and } 
 \begin{cases} x=0 \\ w  = 0 \end{cases} \text{ for } 
 \widehat {\P T W^s},$$
 and it follows that both varieties are smooth near $\widehat 0$.   
Since $\Delta^u_\lambda$ is smooth and varies holomorphically, a similar argument  establishes  the  smoothness of  $\widehat {\P T \Delta^u}$ at every point. 
 
 We claim that    
   $  \mult_{\hat{0}}({\widehat {\P T W^s}, \widehat {\P T \Delta^u}})$ is equal to 
 the intersection multiplicity of the curves 
 $\mathcal C:= \set{\varphi_{y_0}(\lambda, t)  = 0}$ and $\dot{\mathcal {C}}:= 
 \set{\dot{\varphi}_{y_0}(\lambda, t) = 0}$, which 
 admit an isolated intersection at $(0,0)$. \commentaire{Precision added here}
 Indeed,  
  the multiplicity  is  the number of intersections of  generic local  translations of 
 $ \widehat {\P T \Delta^u}$ and $ \widehat {\P T W^s}$ (see \cite[\S 12]{chirka}), so it is the number of intersection points between 
   $$\begin{cases} x= \varphi_{y_0}(\lambda, t) +\e_1 \\ w  = \dot{\varphi}_{y_0}(\lambda,  t)  +\e_2 \end{cases}   \text{ and } 
 \begin{cases} x=\e_3 \\ w  = \e_4 \end{cases},$$ for generic $(\e_1, \e_2, \e_3, \e_4)$, which corresponds to  
 $$\set{x= \varphi_{y_0}(\lambda, t) +\e_1 - \e_3}\cap \set{w  = \dot{\varphi}_{y_0}(\lambda,  t) + \e_2  -\e_4}
 $$ 
 and the claim follows.

On the other hand, that the classical definition of multiplicity of the intersection of two curves 
 is  $\mult_0(\mathcal C, \dot{\mathcal C}) = 
 \dim\mathcal{O}(\cd, 0)/\langle \varphi_{y_0}, \dot \varphi_{y_0}\rangle$.
 %\romain{quelle ref pour les courbes analytiques?}
  In the situation at hand we have 
\begin{equation}\label{eq:gamma_gammaprime}
\begin{cases}
 \varphi_{y_0}(\lambda, t)   = c t^{h+1} + O({t^{h+2}})+ O\lrpar{\lambda} \\
 \dot\varphi_{y_0}(\lambda, t)   = ch t^{h} + O({t^{h+1}})+ O\lrpar{\lambda} \end{cases}
 \end{equation} hence 
$\mathcal{O}(\cd, 0)/\langle \varphi_{y_0}, \dot  \varphi_{y_0}\rangle$ contains all monomials 
$1, t , \ldots , t^{h-1}$ and its dimension is at least $h$.  
\end{proof}
  
\subsection{Speed of motion} \label{subs:speed}  
We now explain how  the multiplicity defined above  takes into account both the order 
and the speed of motion of the tangency. %For this it is convenient to use Puiseux parameterizations. 
Recall from the   proof of Lemma~\ref{lem:multiplicity} 
that the multiplicity of tangency equals 
$\mult_0(\mathcal C, \dot{\mathcal C})$, where  
 $\mathcal C= \set{\varphi_{y_0}(\lambda, t)  = 0}$ and $\dot{\mathcal {C}}= 
 \set{\dot{\varphi}_{y_0}(\lambda, t) = 0}$.
 
For simplicity , let us first assume that $\dot{ \mathcal {C} }$ is irreducible at $(0, 0)$. For a
 fixed  small $\lambda\neq 0$, the equation $\dot\varphi_{y_0}(\lambda, t) = 0$ admits $h$ solutions 
 (that is, there are $h$ vertical tangencies)
  so  $\dot{\mathcal C}$ can be parameterized by a Puiseux series in $t^{1/h}$. 
 In usual complex geometric language, there is a coordinate $\mu$ on the normalization 
 of ${\dot{\mathcal C}}$ such that the expression of the composition of the 
 normalization map and the first projection $(\lambda, t)\mapsto \lambda$ is
 $\mu\mapsto \mu^h$, hence $\dot{\mathcal C}$ admits a  local (injective) 
 parameterization of the form 
  $\mu\mapsto (\mu^h, \theta(\mu))$, for some holomorphic function $\theta$ with 
 $\theta(0) = 0$.
 Writing 
 \begin{equation}
 \varphi_{y_0}(\lambda, t) = c t^{h+1} + O({t^{h+2}})+ \lambda \psi(\lambda, t)
 \end{equation}
  and 
 substituting, we infer that  
\begin{equation}\label{eq:gamma}
\varphi_{y_0}(\mu^h, \theta(\mu)) = c\theta (\mu)^{h+1} +  O({\theta (\mu)^{h+2}}) + 
 \mu^h \psi(\mu^h, \theta(\mu)),
 \end{equation}
 and the multiplicity $m$  of tangency  is the order of vanishing of this expression at $\mu = 0$ (since the tangency is non-persistent, $\mu\mapsto\varphi_{y_0}(\mu^h, \theta(\mu))$ has an isolated zero at the origin). 
 This yields another proof that $m\geq h$, with equality if and only if 
 $\psi(0, 0)\neq 0$, that is, 
\begin{equation}\label{eq:speed_order1}
\varphi_{y_0}(\lambda , t ) = ct^{h+1} + d \lambda  +\hot, \text{ with }  d=\psi(0,0). 
\end{equation}
 In the language of Puiseux series, this reads
\begin{equation}\label{eq:gamma_puiseux}
\varphi_{y_0}(\lambda, \theta(\lambda^{1/h})) = c\theta (\lambda^{1/h})^{h+1} +  
 O({\theta (\lambda^{1/h})^{h+2}}) + 
 \lambda \psi(\lambda , \theta(\lambda^{1/h})). 
 \end{equation}
The  speed of motion $\sigma$ of the vertical tangencies is characterized  by the 
 exponent of $\lambda$ in $\varphi_{y_0}(\lambda , \theta(\lambda^{1/h}))$, 
 that is, 
 \begin{equation}\label{eq:sigma}
 \varphi_{y_0}(\lambda , \theta(\lambda^{1/h}))\cong \lambda^\sigma
 \text{, where }
 \sigma = m/h\geq 1\end{equation} ($\sigma$  is typically not an integer), 
 and we see that \emph{non-vanishing speed of motion at $\lambda =0$ 
 (i.e. non-degenerate unfolding)
 corresponds   to $m=h$}, as expected from~\eqref{eq:speed_order1}.  
 Note that   for quadratic tangencies  ($h=1$), this simply 
means that \emph{$\widehat {\P T W^s}$ and $\widehat {\P T \Delta^u}$ are transverse at $\widehat 0$}. 
 
 Beware, however, that non-vanishing speed of motion (i.e. $\sigma = 1$) 
 does \emph{not} 
 imply that the vertical tangencies can be followed holomorphically, as shown for instance 
 by the example $\varphi_{y_0}(\lambda, t) = t^{h+1} + \lambda (1+t)$, for 
 which the abscissae of vertical tangencies are given by $\lambda+ c \lambda^{1+1/h}$ for some 
 explicit $c$.   
  
 In the general case where $\dot{\mathcal C}$ is reducible, write 
 $\dot\varphi_{y_0}(\lambda, t) \cong \prod_{j=1}^q \xi_j(\lambda, t)$ (up to some invertible element of 
 $\mathcal O(\cd, 0)$), where $\xi_j(0, t) = t^{h_j}+\hot$, and $\sum_j h_j  = h$.  Each branch 
 $\dot{\mathcal C}_j$ is injectively parameterized by $\mu\mapsto (\mu^{h_j}, \theta_j(\mu))$, $\theta_j(0) =0$,
 and substituting this expression in $ \varphi_{y_0}$ as in~\eqref{eq:gamma} 
 gives 
 \begin{equation}\label{eq:gamma_j}
\varphi_{y_0}(\mu^{h_j}, \theta_j(\mu)) = c\theta_j (\mu)^{h+1} +  O({\theta_j (\mu)^{h+2}}) + 
 \mu^{h_j} \psi(\mu^{h_j}, \theta_j(\mu)).
 \end{equation}
 This shows that $m_j   \geq  h_j $, where 
 $m_j = \mult_0(\mathcal {C}, \dot{\mathcal C}_j)$, with equality if and only if $\psi(0, 0)\neq 0$ (note that this condition does not depend on $j$), and 
    $\sigma_j:=m_j/h_j$  
 is the   speed exponent  of the block of $h_j$ 
  tangencies corresponding to $\dot{\mathcal C}_j$. By the additivity of multiplicity, we conclude that 
  $m  = \sum_{j=1}^q h_j \sigma_j$. Combining this relation with  $\sum_j h_j  = h$, shows that 
   {$m=h$ if and only if 
   $\sigma_j = 1$ for every $j$}. As observed above, this property is actually independent of $j$.
   %so again \emph{the equality $m = h$ characterizes a non-degenerate unfolding}. 
 
 The following statement summarizes this discussion. \commentaire{New statement for Exposition Rmk \#3}
 
\begin{propdef}\label{propdef:speed}
Let  $(f_\lambda)$ be as in Normalization~\ref{norm:tang}, with a non-persistent tangency at 0, 
 $\Lambda$ being the unit disk. 
Let $m$ (resp. $h$) is the multiplicity (resp. order) of this tangency. Then there exists   decompositions $m = \sum_{j=1}^q m_j$ and $h = \sum_{j=1}^q h_j$, indexed by the set of  irreducible components of $\set{\fr \varphi/\fr y  = 0}$ at $(0,0)$, such that for every $j$, $m_j\geq h_j\geq 1$. By definition, 
$\sigma_j = m_j/h_j$ is the speed of motion of the block of tangencies corresponding to the 
 $j^{\rm th}$ component, 
and the unfolding has positive speed (we may also say that it is  non-degenerate)
 if $\sigma_j   = 1$ for all (or equivalently some) $j$, 
that is,   $m=h$. 
\end{propdef}

\subsection{Secondary intersections} 
The following result is specific to   complex diffeomorphisms. Note  that for polynomial automorphisms 
of $\C^2$ it also follows from global arguments (see~\cite[\S 9]{bls}). 

\begin{prop}\label{prop:secondary}
If $f:\Omega\to\cd$ is a diffeomorphism with a homoclinic tangency associated to $p$, 
then there are also
 transverse homoclinic intersections between $W^s(p)$ and $W^u(p)$. 
\end{prop}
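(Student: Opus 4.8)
The plan is to exploit the complex structure: a non-persistent homoclinic tangency at $p$ already carries, nearby, the combinatorial data of a horseshoe, and the tangency point is a degenerate (hence non-generic) intersection that must be accompanied by transverse ones for degree/multiplicity reasons. Concretely, I would work in the local coordinates of \S\ref{subs:conventions}, where $p=0$, $W^s_\loc(p)=\{x=0\}$ and the tangent branch $\Delta^u$ of $W^u(p)$ is $x=\varphi(y)$ with $\varphi(y)\cong (y-y_0)^{h+1}$ near $\tau=(0,y_0)$. The first step is to set up a suitable bidisk $\B$ (after an affine rescaling/iteration) in which $W^s_\loc(p)$ is the central vertical axis and a long piece of $W^u(p)$ — obtained by iterating $\Delta^u$ forward under $f$, which stretches it along the unstable direction — becomes a \emph{horizontal} submanifold $\Delta$ of some degree $d\geq 2$; the degree is at least $2$ precisely because $\Delta$ contains a tangency of order $h\geq 1$ with the vertical line $\{x=0\}$ (or, after iterating, with vertical graphs approximating it). The point is that $W^s_\loc(p)$, being part of the stable manifold, can be thickened into a nearly-vertical lamination (stable manifolds of nearby points in the horseshoe, or just vertical graphs $C^1$-close to $\{x=0\}$).

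Second step: apply Lemma~\ref{lem:6.4} (or Lemma~\ref{lem:riemann-hurwitz}). Take a vertical graph $W$ that is $C^1$-close to $\{x=0\}$ but disjoint from $\{x=0\}$ — for instance a piece of the stable manifold $W^s(q)$ of a nearby saddle point $q$ in the horseshoe (whose existence near a homoclinic tangency is classical, via Smale's theorem), or even a trivial vertical graph $\{x=\e\}$. By Lemma~\ref{lem:6.4} with $\Delta'=\{x=0\}\supset W^s_\loc(p)$ and $\Delta''=W$, this $W$ meets $\Delta$ transversally in $h+1\geq 2$ points near $\tau$. Pulling $W$ back by the appropriate power of $f$ that was used to stretch $\Delta^u$ into $\Delta$, and using that $W$ is (a piece of) a stable manifold in the horseshoe so its backward iterates return near $p$ as vertical-type graphs accumulating $W^s_\loc(p)$, we produce genuine transverse intersection points between $W^u(p)$ and $W^s(p)$ (or $W^s(q)$ with $q$ a saddle whose stable/unstable manifolds are homoclinically related to those of $p$, which by the $\lambda$-lemma / inclination lemma and the horseshoe structure forces transverse homoclinic points for $p$ itself).

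Third step: upgrade "transverse intersection of $W^u(p)$ with a nearby stable manifold" to "transverse homoclinic intersection of $p$." This is where I use that $p$ sits in a horseshoe: the homoclinic tangency, together with transversality of the horseshoe's invariant laminations, means $W^s(p)$ and $W^u(p)$ are dense in the corresponding laminations; any transverse intersection of a leaf of $W^u$ with a leaf of $W^s$ in the horseshoe is then a limit of (or can be directly perturbed within the family of actual manifolds to) a transverse point of $W^s(p)\cap W^u(p)$, using the inclination lemma to spread intersections along the orbit. Alternatively, and more cleanly, since after forward iteration $\Delta$ is a horizontal submanifold of degree $d\geq h+1\geq 2$ in $\B$ and $W^s_\loc(p)$ is vertical of degree $1$, Bézout in the bidisk (\S2.2: $V_1\cap V_2$ has $d_1 d_2=d$ points with multiplicity) gives $d$ intersection points counted with multiplicity; the tangency at $\tau$ absorbs only $h+1\leq d$ of them when $d>h+1$, or when $d=h+1$ one iterates once more to strictly increase $d$ while keeping the tangency, so at least one \emph{simple} (transverse) intersection point remains, and this is a transverse homoclinic point. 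The main obstacle is the bookkeeping in this last step: ensuring that the forward iterate $\Delta$ genuinely has degree strictly greater than the local intersection multiplicity at $\tau$ (equivalently, that not all $d$ intersections collapse at the tangency), which requires controlling how $f$ acts on a long horizontal piece of $W^u$ — this is a standard but slightly delicate argument showing that iterating $\Delta^u$ increases its degree as a horizontal submanifold, using the expansion $|u|>1$ in the unstable direction and the fact that $\Delta$ must eventually "wrap around" enough to acquire new intersections with $\{x=0\}$.
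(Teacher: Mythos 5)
Your argument has a genuine circularity problem. At two key places you invoke a horseshoe near $p$ (stable manifolds $W^s(q)$ of nearby saddles as the vertical graphs to intersect, and in the third step the "transversality of the horseshoe's invariant laminations" to upgrade intersections to homoclinic points of $p$). But Smale's theorem produces a horseshoe from a \emph{transverse} homoclinic point, which is exactly what this proposition is asserting exists; indeed the paper uses Proposition~\ref{prop:secondary} precisely to justify that $p$ automatically lies in a horseshoe (see the remark in the Introduction and the beginning of \S\ref{subs:reduction}). So neither the nearby saddles nor the stable lamination are available as inputs. Your non-circular fallback — take $\Delta''=\{x=\e\}$, or count intersections of a forward iterate $\Delta$ of $\Delta^u$ with the vertical line $W^s_\loc(p)=\{x=0\}$ by B\'ezout — also does not close: intersections with $\{x=\e\}$ are not homoclinic points, and the B\'ezout count against $\{x=0\}$ only helps if the horizontal degree of $\Delta$ over a bidisk around $W^s_\loc(p)$ strictly exceeds $h+1$, i.e.\ if $W^u(p)$ has intersections with $W^s_\loc(p)$ beyond the orbit of tangency. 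That is essentially the statement to be proved; forward iteration pushes $\Delta^u$ along the unstable direction and there is no a priori "wrapping around" that creates new crossings of $\{x=0\}$ inside the local bidisk. You flag this as delicate bookkeeping, but it is the missing idea, not a technicality.

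The paper avoids both problems by playing a forward-iterated unstable piece against a \emph{backward}-iterated stable piece, so that the bidegree count is automatic. Pulling back the tangency configuration by $f^{k}$ gives a branch $\Delta^s\subset W^s(p)$ tangent to $W^u_\loc(p)$ at some $(x_0,0)$ with the same order $h$; for small $\delta$ it is a vertical disk of degree $h+1$ in $\D\times D(0,\delta)$, while $\Delta^u$ is a horizontal disk of degree $h+1$ in $D(0,\delta)\times\D$. By the Inclination Lemma, $f^n$ acts as a degree-one crossed mapping between these bidisks, so the graph transform $\el^n\Delta^u$ is horizontal of degree $h+1$ in $\D\times D(0,\delta)$ and meets $\Delta^s$ in $(h+1)^2$ points with multiplicity — no degree growth needs to be engineered. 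Transversality then comes exactly from the tools you had in mind, but applied near $W^u_\loc$ rather than near $W^s_\loc$: since $\el^n\Delta^u$ is a topological disk (maximum principle) with a single vertical tangency (Riemann--Hurwitz, cf.\ Lemma~\ref{lem:riemann-hurwitz}) located near $x=0$, over $D(x_0,r)$ it is a union of $h+1$ horizontal graphs disjoint from and $C^1$-close to $\{y=0\}$, and Lemma~\ref{lem:6.4} applied against $\Delta^s$ (tangent to $\{y=0\}$ at $(x_0,0)$) forces all the intersections to be transverse. Since both pieces lie in $W^u(p)$ and $W^s(p)$ respectively, these are genuine transverse homoclinic points, with no horseshoe needed beforehand.
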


To get an  intuition of  what is going on, 
let us explain the argument (which is classical) in the oversimplified case 
where the dynamics in linearizable at $p$. In this case we can simply write $f(x,y) = (ux, sy)$. With notation as in \S\ref{subs:conventions} we have a branch $\Delta^u$ of $W^u(p)$ tangent to 
$W_\loc^s(p)$ at $(0, y_0)$, with equation $x = \varphi(y) \cong (y-y_0)^{h+1}$. Pulling back 
this pair of curves by for some 
 iterate $f^k$, we get a branch $\Delta^s  \subset W^s(p)$ tangent 
to $W^u_\loc(p)$ at $(x_0, 0)$, with equation $ y =\psi(x)\cong (x-x_0)^{h+1}$. If $(x,y)\in \Delta^u$ with 
$\abs{x} \ll 1$, then a tangent vector to $\Delta^u$ at $(x,y)$ is   
$v \asymp (x^{h/(h+1)}, 1)$, whose slope is $\asymp x^{ -h/(h+1)}$. Likewise, if 
$(x,y)\in \Delta^s$ with  $\abs{y} \ll 1$, then the  slope of $\Delta^s$ at $(x,y)$ is  
$\asymp y^{ h/(h+1)}$. 
Now let us assume for the moment that some version of the argument principle guarantees that $f^n(\Delta^u)$ intersects $\Delta^s$ for large $n$, so there exists  
$(x_0, y_0)\in \Delta^u$ such that 
$(x_n, y_n) = (u^nx_0, s^ny_0) \in \Delta^s$. Then with notation as above, $v  
\asymp (u^{- nh/(h+1)}, 1)$ so its image under $df^n$ is $\asymp (u^nu^{- nh/(h+1)}, s^n)$ 
 whose slope is $s^nu^{- nh/(h+1)}$. On the other hand the slope of $T\Delta^s$ at 
 $(x_n, y_n)$ is $\asymp s^{n h/(h+1)}  \gg s^nu^{- nh/(h+1)}$ so 
 $f^n\Delta^u$ is transverse to $\Delta^s$. Making this argument rigorous in the non-linearizable case is quite technical, and it is remarkable that in the complex setting 
 all these estimates can be replaced by geometric analysis  considerations.
 
 \begin{proof}
 We keep   notation as above, and use the formalism of 
 horizontal/vertical objects 
  and crossed mappings  from~\cite{HOV2} 
  (or Hénon-like mappings of degree 1 in the language of 
 \cite{henonl}).  
 Consider a thin vertical bidisk around $W^s(p)$ of the form 
 $D(0, \delta)\times \D$. Then for small enough $\delta$, $\Delta^u$ is a horizontal disk of degree $h+1$ in  $D(0, \delta)\times \D$. Likewise, reducing $\delta$ if necessary, 
  $\Delta^s$ is a vertical  disk of degree $h+1$ in  $\D\times D(0, \delta)$. 
 By the Inclination lemma, for large $n$, 
 $f^n$ defines a crossed mapping of degree 1  from  
 $D(0, \delta)\times \D$ to $\D\times D(0, \delta)$. Therefore the graph transform 
 $\el^n\Delta^u$  of $\Delta^u$ (that is, its image under the crossed mapping) is 
 a horizontal disk of degree $h+1$ in $\D\times D(0, \delta)$, so it intersects $\Delta^s$ in 
 $(h+1)^2$ points, counting multiplicities. We claim that for large $n$ all these intersections are transverse. Indeed when $\delta$ is small enough, $\Delta^s \cap (\D\times D(0, \delta))$ is contained 
 in $D(x_0, r)\times D(0, \delta)$, where $r<\abs{x_0}/2$. Now, since  $\el^n\Delta^u$ 
 admits a tangency of order $h$ with 
$W^s_\loc(p)$ and since by the maximum principle it is a topological disk, by the 
Riemann-Hurwitz formula it admits no other vertical tangency (see also Lemma~\ref{lem:riemann-hurwitz}).  So 
$\el^n\Delta^u\cap (D(x_0, r)\times D(0, \delta))$ is the union of $h+1$ horizontal graphs disjoint from 
$D(x_0, r)\times \set{0}$, and close to it. Then by Lemma~\ref{lem:6.4} these graphs must be transverse to $\Delta^s$, and we are done. (Note that we may also apply Lemma~\ref{lem:riemann-hurwitz} since $\Delta^s$ has a horizontal tangency of maximal order.)  
 \end{proof}
 
\begin{rmk}\label{rmk:near_tangencies}
Since there are homoclinic intersections, 
$W^u(p)$ must accumulate itself. In particular there are disks   contained in  $W^u(p)$ arbitrary $C^1$ close to $\Delta^u$. Applying Lemma~\ref{lem:6.4} again then produces transverse 
intersections between these disks and $W^s_\loc(p)$, such that the angle 
between the tangent spaces at the intersection  is arbitrarily small. This observation will be crucial later. 
\end{rmk}

\subsection{Comments on higher dimensional families}  \label{subs:comments_higherdim}
Still working under  the conventions of \S\ref{subs:conventions}, assume in this paragraph   
 that $k:=\dim(\Lambda)>1$. In this context, we say that a non-persistent tangency has positive speed 
 if there exists a smooth 1-dimensional subfamily
 $\Lambda_1\ni\la_0$ along which this property holds.
 Let $\mathcal{T}\subset \Lambda$ be the   locus  where
 $W^s_\loc(p_\la)$ and $\Delta^u_\lambda$ are tangent. \commentaire{We add a formal statement here for Exposition Rmk \#3}

 \begin{lem}\label{lem:higherdim}
 Let $(f_\lambda)_{\lambda\in \Lambda}$ 
 be as in Normalization~\ref{norm:tang}, with a non-persistent tangency. 
 The tangency locus $\mathcal T$ is an analytic hypersurface. If the unfolding has positive speed, 
 $\mathcal T$ is smooth at $\lo$.  
 \end{lem}
  
\begin{proof}
 Arguing exactly as in Lemma~\ref{lem:multiplicity}, we see that 
$\widehat{\P TW^s}$ and  $\widehat{\P T\Delta^u}$ are smooth and of codimension 2 in 
$\Lambda\times \B\times \P^1$, so by~\cite[\S 3.5]{chirka} 
 $\dim (\widehat{\P TW^s}\cap \widehat{\P T\Delta^u})\geq 2(k+1) - (k+3) = k-1$.  
The natural projection 
$\pi_\Lambda:\Lambda\times \B\times \P^1\to \Lambda$ is finite and locally proper in restriction to 
$\widehat{\P TW^s}\cap \widehat{\P T\Delta^u}$, so 
it preserves  dimensions~\cite[\S 3.3]{chirka} and $\mathcal T$ is an analytic subset of dimension 
at least $k-1$. 
Since the tangency is not persistent, $\dim(\mathcal T)<k$, and we conclude that 
$\dim(\mathcal T)= k-1$, that is, 
 {$\mathcal T$ is an analytic  hypersurface}.

%Slightly abusing terminology, we say that  
%the homoclinic tangency has \emph{positive speed} if there exists a smooth 1-dimensional subfamily
%$\Lambda_1\ni\la_0$ along which this property holds. 
% Then, 
If  now we assume that the unfolding has positive speed, and  
if as above $\Lambda_1$ is a 1-dimensional subfamily with non-degenerate unfolding, then
in the 4-dimensional subspace 
 $\Lambda_1\times \B\times \pu$, the 
 intersection $\widehat{\P TW^s}\cap \widehat{\P T\Delta^u}$ 
 (which is reduced to a point) is transverse. 
 Counting dimensions in the tangent space, it is easy to see that 
 it implies the corresponding transversality in  $\Lambda\times \B\times \pu$. And since 
 the fibers of the projections $\pi_{\Lambda}$ and $\pi_{\Lambda_1}$ coincide, we also deduce that 
 in $\Lambda\times \B\times \pu$, $\widehat{\P TW^s}\cap \widehat{\P T\Delta^u}$ is transverse to 
 $\pi_\Lambda\inv(\set{\lo})$. This implies  that %\emph{if the unfolding has positive speed, 
 $\mathcal T$  is smooth at $\lo$, as asserted.
 \end{proof} 

\section{Unfolding degenerate tangencies}\label{sec:quadratic}

\subsection{Linearization and graph transform estimates}\label{subs:graph_transform}
As in \cite[\S 3]{takens}, a key technical fact in the argument of Theorem~\ref{thm:quadratic} is that in the graph transform, 
  higher derivatives converge faster and  faster to zero. This is obvious for the linear map 
$f(x,y) = (ux, sy)$: indeed the forward iterate of the graph $y  =\gamma(x)$ is $y = \el^n\gamma (x)$, 
with $\el^n\gamma : x\mapsto s^n \gamma( x/u^n)$ so $\smallnorm {(\el^n\gamma )^{(\ell)} }  = O((su^{-\ell})^n)$. 
It is unclear to us whether such an 
 estimate holds in general, and to achieve this, as in \cite{takens}
we use $C^\ell$ linearization, which
 imposes some conditions on the multipliers $u$ and $s$. 
 Even if this belongs to real dynamics, we want to salvage as much complex geometry as possible 
 --in particular we  need to be able to talk about the \emph{complex} multipliers $u$ and $s$, and not only their moduli, which is important in the parameter exclusion in \S\ref{subs:reduction}--
 so the presentation is different from that of Takens (and since this matter is  quite delicate   
 we  actually give more details). 
 In particular we will not switch between different coordinate systems  in the proof of the main theorem, 
 and always stay in holomorphic coordinates, 
 which in our opinion makes the argument neater. 
 
 An important remark is that to achieve 
 $\smallnorm {(\el^n\gamma)^{(j)} }  = O((su^{-j})^n)$ for all $j\leq \ell$ it is not enough to merely know the existence of 
 a system of $C^\ell$ linearizing coordinates: we also need this coordinate system 
  to be flat up to a high  order $K=K(\ell)$ along the separatrices, to 
 prevent lower derivatives of the chart  to spoil the estimate 
 $\smallnorm {(\el^n\gamma)^{(j)} }  = O((su^{-j})^n)$.
 Finally, we need some uniformity of these estimates with respect to parameters. 

\subsubsection{Normal form}  The first stage is to put $f$ in an appropriate normal form, under a non-resonance condition. 
Denote by $\fkm$ the maximal ideal of the local ring of germs of holomorphic functions in $(\cd, 0)$,
 that is, the 
ideal of functions vanishing at the origin.
Recall that if $f(x,y) = (ux, sy)+ \hot$, a \emph{resonance of order $k$} is a relation of the form 
$u = u^a s^b$ or $s = u^as^b$, where $a$ and $b$ are positive integers with $a+b = k$. When 
 $\abs{s}<1<\abs{u}$, this can be rewritten as $u^a s^b = 1$, with $a+b = k-1$. 
 It is well-known that resonances are obstructions to holomorphic (and even formal) linearization. 
 More precisely, a resonance of the form 
 $u = u^a s^b$ (resp.  $s = u^a s^b$) 
 prevents from killing the term $x^ay^b$ in the first (resp. second) 
 component of $f$. Conversely, if $f$ has no resonance up to order $k$,   
 a holomorphic (actually polynomial) change of coordinates brings $f$ to the form 
\begin{equation}\label{eq:form_f}
f(x,y) = (u x+ g_1(x,y), sy+g_2(x,y)), \text{ with } g_1, g_2\in \fkm ^{k+1}.
\end{equation}

The following more precise normal form  is presumably  
 known to some  experts. We include the proof for completeness. 

\begin{prop}\label{prop:sternberg}
Let $f\in \Diff(\cd,0)$ with a saddle fixed point at the origin,
 with eigenvalues $u$ and $s$. If there is no resonance up to order $k+1$, $f$ can be brought to the form 
\begin{equation}\tag{$\star_k$}
f(x,y)  =  (ux(1+ yg_1(x,y)), sy(1+xg_2(x,y))), \text{ with } g_1, g_2\in \fkm^k. 
\end{equation}
by a holomorphic change of coordinates. 
\commentaire{Precision added for \S 4.1.3 (cf. Exposition Rmk \#3). Several comments added in the proof about this. }Furthermore, if $f$ depends holomorphically on a parameter $\lambda$, then this change of coordinate, as well as $g_1$ and $g_2$,  can be chosen to depends holomorphically on $\lambda$ as well. 
\end{prop}

\begin{proof}
We review the proof of the existence of the local stable and unstable manifolds, by checking that the corresponding change of coordinates are sufficiently tangent to identity at the origin, and depend holomorphically on $f$.  Let us first deal with the local unstable manifold. We follow   Sternberg~\cite[Thms 7 to 9, \S 9]{sternberg1}. To stick with the notation of~\cite[p. 823]{sternberg1}, 
we put $T = f\inv$.  Thanks to the non-resonance assumption, we can assume that 
\begin{equation}\label{eq:form_T}
T(x,y) = (u\inv x + \tilde g_1(x,y), s\inv y + \tilde g_2 (x,y)), \text{ with } \tilde g_1, \tilde g_2\in \fkm ^{k+2}.
\end{equation}
\commentaire{added comment on holomorphic variation} If $f$ depends holomorphically on a parameter $\lambda$, 
then so do $\tilde g_1$ and $\tilde g_2$, since they are obtained by explicit transformations on power series. 
We first look for a change of coordinates  $R:(x,y)\mapsto (x, y-\phi(x))$ with $\phi'(0) = 0$
such that $RTR\inv(x,0) = (*, 0)$, so that the axis $\set{y=0}$ is invariant, hence it is 
 the local stable manifold of $T$ (unstable manifold of $f$). The existence (and uniqueness) of such a $\phi$ is guaranteed by the Stable Manifold Theorem. Here we only need to check that $\phi(x) = O(x^{k+2})$.  
With $T$ as in~\eqref{eq:form_T},
  the relation $RTR\inv(x,0) = (*, 0)$ is equivalent to 
 \begin{equation}
 s\inv \phi(x) + \tilde g_2(x,\phi(x)) = \phi(u\inv x + \tilde g_1(x, \phi(x))). 
 \end{equation}
In other words, $\phi$ is a fixed point of the operator $\mathcal D$ defined by 
\begin{equation}
\mathcal D \phi :x\longmapsto   s\lrpar{ \phi(u\inv x + \tilde g_1(x, \phi(x))) -\tilde g_2(x,\phi(x))  }. 
\end{equation}
This is a contracting operator in a suitable Banach space of holomorphic functions on $r\D$ for small $r$, and since $g_1, g_2 \in \fkm^{k+2}$, it preserves the closed subspace of functions vanishing to order 
$k+2$, and the fixed point belongs to this subspace, as desired. \commentaire{added comment on holomorphic variation} Note that
if $f$ depends holomorphically on a parameter $\lambda$, then so do 
  $u$, $s$, $\tilde g_1$ and $\tilde g_2$, hence $\mathcal D$, so the fixed point depends holomorphically on $\lambda$ as well. 

Then we do the same with the stable direction, by a change of coordinate of the form $(x,y)\mapsto (x-\psi(y), y)$, therefore we have shown  that there is a change of coordinate tangent to the identity to the order $k+1$ such that the stable and unstable manifold are the coordinate axes. In these new coordinates, $f$ is of the form
\begin{equation}
f(x,y) = (f_1(x,y), f_2(x,y))  = (ux(1+h_1(x,y)), sy(1+h_2(x,y)), \text{ with } h_1, h_2\in \fkm ^{k+1}. 
\end{equation}

To reach the desired form $(\star_k)$ we linearize $f$ inside $W^{s/u}_\loc(0)$. 
More precisely, we consider the one-dimensional map $h:x\mapsto f_1(x, 0) = ux(1+h_1(x,0)) = ux+ O(x^{k+2})$.  By   Koenigs' theorem it is locally conjugate to $x\mapsto ux$ by some 
 local diffeomorphism $\phi$, which depends holomorphically on $\lambda$ if $f$ does\commentaire{added comment on holomorphic variation}. Moreover, examining the proof (see e.g. \cite{milnor_book}) reveals that 
 $\phi(x)   = x+ O(x^{k+2})$. So if  we conjugate $f$ by $(x,y)\mapsto (\phi(x), y)$, in the new coordinates we get 
 \begin{equation}
f(x,y) =  (ux(1+\tilde h_1(x,y)), sy(1+\tilde h_2(x,y)), \text{ with } \tilde h_1,\tilde  h_2\in \fkm ^{k+1}, 
\end{equation}
 and furthermore $ \tilde h_1(x, 0) = 0$, hence $\tilde  h_1 (x,y) = y \hat h_1(x,y)$, with 
 $\hat h_1(x,y)\in \fkm^k$. 
 Repeating this operation in the stable direction (which does not affect the form 
 of the first coordinate of $f$) concludes the proof.
 \end{proof}

\subsubsection{$C^\ell$ estimates.}
 Let $f$ be a diffeomorphism defined in   $2 \B$, 
with a saddle fixed point at the origin, which is 
of the form  $(\star_k)$ of Proposition~\ref{prop:sternberg}. 
Fix a constant $\rho>0$ such that 
\begin{equation}\label{eq:rho}
1+\rho \leq \abs{u}\leq 1+\rho\inv \text{ and } 1+\rho \leq \abs{s}\inv\leq 1+\rho\inv 
\end{equation}
If $\norm{g_1}_{2  \B}$ and $\norm{g_2}_{2 \B}$ in $(\star_k)$ are small enough, then by the Cauchy estimates, the 
graph transform $\el$ acting on horizontal graphs in $\B$ is well defined. We leave it as an exercise to the reader to check that the condition  
\begin{equation}
\label{eq:maxg}
\max(\norm{g_1}_{2  \B},\norm{g_1}_{2  \B})\leq \frac{\rho}{10} 
%\leq \unsur{10}\max (1 - \abs{u}\inv, \abs{s}\inv - 1)
\end{equation}
 is sufficient. 
 Note that if $f$ is of the form $(\star_k)$ in some small neighborhood of $0$, 
 then by scaling the coordinates 
we can assume that it is defined in $2\B$ and achieve any desired bound 
on $\max(\norm{g_1}_{2  \B},\norm{g_1}_{2  \B})$.

\begin{prop}\label{prop:graph_transform}
For every   integer $\ell\geq 2$, there exists  $k = k(\ell, \rho)$ such that if 
 $f$ is a  diffeomorphism defined in   $2 \B$ of the form $(\star_k)$   satisfying 
 \eqref{eq:rho} and \eqref{eq:maxg}, then
there exists     a constant $C = C(\ell,\rho)$\commentaire{$\norm{g_1}_{2  \B},\norm{g_1}_{2  \B}$ removed, cf minor comment \#3}
such that for every $j \leq \ell$, $\norm{(\el^n\gamma)^{(j)}}_\D \leq C\abs{s u^{-j}}^n$. 
\end{prop}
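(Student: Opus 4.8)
The plan is to reduce the estimate to the (elementary) one for the linear map $L(x,y)=(ux,sy)$, by conjugating $f$ to $L$ through a $C^\ell$ change of coordinates that is highly tangent to the identity along the two separatrices; the whole point of assuming the normal form $(\star_k)$ with $k$ large is to make such a chart available.

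\smallskip\noindent\textbf{Step 1 (a flat $C^\ell$-linearizing chart).} I would first fix an auxiliary integer $K=K(\ell,\rho)$, to be pinned down at the end, and choose $k=k(\ell,\rho)$ large enough that the following holds: every $f$ of the form $(\star_k)$ satisfying \eqref{eq:rho} admits a $C^\ell$ diffeomorphism $h$, defined near $0$, with $h(0)=0$, such that $h\inv\circ f\circ h=L$ and moreover $h-\mathrm{id}$ vanishes to order $\geq K$ along each axis $\{x=0\}$ and $\{y=0\}$. This is a quantitative form of Sternberg's $C^\ell$-linearization theorem; its proof runs the successive-approximation scheme already used in the proof of Proposition~\ref{prop:sternberg} (kill the resonance-free jet, conjugate on each separatrix, then iterate), keeping track of the flatness along the axes at each step: since $f$ in the form $(\star_k)$ coincides with $L$ on $\{xy=0\}$ and is $(k+2)$-tangent to $L$ at $0$, each correction gains flatness, so $h$ is as flat as wanted along the axes once $k$ is large. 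I would also record that $h$, $h\inv$ and their $C^\ell$-norms are uniform over the family of $f$'s satisfying \eqref{eq:rho}, \eqref{eq:maxg} and the non-resonance hypothesis, since the multipliers $u,s$ then range in a fixed compact subset of $\{\abs{u}>1>\abs{s}\}$ avoiding the finitely many relevant resonance loci. Finally, after the rescaling mentioned below \eqref{eq:maxg}, I work on $2\B$ with $\norm{g_1}_{2\B},\norm{g_2}_{2\B}$ as small as needed.

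\smallskip\noindent\textbf{Step 2 (reduction to $L$).} Since $\el^n\gamma$ is characterized by $\mathrm{graph}(\el^n\gamma)=f^n(\mathrm{graph}(\gamma))=h\bigl(L^n(h\inv(\mathrm{graph}(\gamma)))\bigr)$, and $h$ is $C^1$-close to the identity, $h\inv(\mathrm{graph}(\gamma))$ is again a horizontal graph $\{y=\tilde\gamma(x)\}$ with $\norm{\tilde\gamma}_{C^\ell(\overline\D)}\leq C_0(\ell,\rho,\norm{g_i})$ (using Cauchy estimates on the holomorphic graph $\gamma$ and the $C^\ell$-bound on $h\inv$). The linear transform is explicit, $\el_L^n\tilde\gamma(x)=s^n\tilde\gamma(x/u^n)$, so for $j\leq\ell$
\[
\bigl\|(\el_L^n\tilde\gamma)^{(j)}\bigr\|_\D=\abs{su^{-j}}^n\,\bigl\|\tilde\gamma^{(j)}\bigr\|_{u^{-n}\D}\leq C_0\,\abs{su^{-j}}^n,
\]
and it remains to pass from $\mathrm{graph}(\el_L^n\tilde\gamma)$ to $\mathrm{graph}(\el^n\gamma)=h(\mathrm{graph}(\el_L^n\tilde\gamma))$.

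\smallskip\noindent\textbf{Step 3 (transport through $h$, and the main obstacle).} On $\mathrm{graph}(\el_L^n\tilde\gamma)$ one has $\abs{y}=\abs{\el_L^n\tilde\gamma(x)}\leq C_0\abs{s}^n$ uniformly. Because $h-\mathrm{id}$ is flat of order $K$ along $\{y=0\}$, any partial derivative of $h-\mathrm{id}$ using at most $b$ differentiations in $y$ still vanishes to order $K-b$ there; restricting to the graph and differentiating along it, a Faà di Bruno / Leibniz bookkeeping shows that $h(\mathrm{graph}(\el_L^n\tilde\gamma))$ is the graph of a function $\el^n\gamma$ with
\[
\bigl\|(\el^n\gamma)^{(j)}-(\el_L^n\tilde\gamma)^{(j)}\bigr\|_\D\leq C\,\abs{s}^{(K-j)n}\qquad (j\leq\ell).
\]
Combined with Step 2 this gives $\|(\el^n\gamma)^{(j)}\|_\D\leq C_0\abs{su^{-j}}^n+C\abs{s}^{(K-j)n}$, and choosing $K$ so that $\abs{s}^{(K-j)n}\leq\abs{su^{-j}}^n$ for all $j\leq\ell$ and all $n$, i.e. $(K-j-1)\ln\abs{s}\inv\geq j\ln\abs{u}$, which by \eqref{eq:rho} holds as soon as $K\geq\ell+1+\ell\,\tfrac{\ln(1+\rho\inv)}{\ln(1+\rho)}$, yields the claimed bound with $C=C(\ell,\rho,\norm{g_i})$, uniformly over the family; this fixes $K$ and hence $k=k(\ell,\rho)$. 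The substance of the argument lies in Steps 1 and 3: one transports a clean linear estimate through a merely $C^\ell$, non-holomorphic chart, and the danger — exactly the one flagged in the introduction — is that a low-order derivative of $h$, which need not be small, multiplies a power of $y$ that decays only like $\abs{s}^n$ rather than $\abs{su^{-j}}^n$; this is why $h$ must be flat to a large order $K$ along the separatrices, and why $k$ must be large. Making the flatness-gaining mechanism in Sternberg's iteration effective and uniform in the parameters is the most delicate point; once the flatness of $h$ is secured, the estimates of Step 3 are routine.
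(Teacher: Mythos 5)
Your proposal follows essentially the same route as the paper: reduce to the linear model via a Sternberg-type linearizing chart flat to high order along the separatrices, transport the explicit linear estimate through the chart by Faà di Bruno bookkeeping (including inverting the induced reparametrization of the graphs), and choose the flatness order so that the error $\abs{s}^{(K-j)n}$ is beaten by $\abs{su^{-j}}^n$ — your condition on $K$ matches the paper's choice of $r(\ell,\rho)$ in \eqref{eq:choice_r}. The only point where you are lighter than the paper is the existence of the flat chart itself (your ``each correction gains flatness'' heuristic, and the domain issue of having the chart cover a neighborhood of $\overline\D\times\set{0}$ rather than just a neighborhood of $0$): the paper secures this, together with the uniformity in parameters and the dependence $k=k(r,\rho)$, by following Sternberg's fundamental-domain construction in \cite{sternberg2} rather than the jet-killing scheme of Proposition~\ref{prop:sternberg}.
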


\begin{proof} The proof proceeds by constructing a $C^r$-diffeomorphism linearizing $f$, which is 
tangent to the identity up to order $r$ along the axes, for a sufficiently large  $r$.  In a first stage we estimate the required value of $r$, and then we  follow Sternberg~\cite{sternberg2} to show that if $k$ is large enough, such a  linearization exists.  

 \medskip

\noindent{\emph{Step 1. Estimation of the order of differentiability $r$. }}

Here we show that there exists $r = r(\ell, \rho)$ such that 
if there exists  a $C^r$-diffeomorphism $R$ whose image contains a neighborhood 
of $\overline \D\times \set{0}$,   
linearizing $f$, i.e. 
  $R\circ L  = f\circ R$ with $L(x,y) = (ux,sy)$ (for convenience the 
  notation here is as in~\cite{sternberg2}, except that $f$ is denoted by $T$ there), and 
   which  is tangent to the identity up to order $r$ along 
  $\set{y=0}$, then $\norm{(\el^n\gamma)^{(j)}}_\D \leq C\abs{s u^{-j}}^n$ for every $j\leq \ell$. 
  
 Write 
 $R = (R_1, R_2)$ and denote by $\pi_1$, $\pi_2$ the  coordinate projections, so that $R_1 - \pi_1$ and $R_2 - \pi_2$ have vanishing first $r$ derivatives along $\set{y=0}$.  
 Start with $R\inv(\set{y = \gamma(x)})$ which is of the form $y = \psi(x)$ 
 and iterate $L$ to get 
 a sequence of graphs  $y = \psi_n(x) =  s^n\psi(xu^{-n})$, whose image under $R$ 
 is   $\set{y  = \gamma_n(x)}$, where $\gamma_n = \el^n\gamma$. 
  Unwinding the definitions gives 
$R_2(x, \psi_n(x)) = \gamma_n(R_1(x, \psi_n(x)))$,
% which we write  
%$\gamma_n( S_n(x)) = R_2(x, \psi_n( x))$, where $S_n(x) =  R_1(x, \psi_n(x))$. 
 and we have to estimate the derivatives of $\gamma_n$. 
A caveat is in order here: $x$ is a complex variable and $\gamma_n$ is holomorphic,
but  $R$ is not, so formally we  have to  write $x = x_1+ix_2$ and deal with the partial derivatives
$\fr_{x_1}^{i_1}\fr_{x_2}^{i_2}$. 
For notational ease, we not dwell on this point and do as if everything was holomorphic 
(this does not change the structure of the estimates). 

Write $\widetilde \gamma_n = \gamma_n( R_1(x, \psi_n(x)))$ and let us estimate the derivatives of 
$\widetilde \gamma_n  - \psi_n$. The $j^{\rm th}$-derivative of 
$\widetilde \gamma_n(x) - \psi_n=  (R_2 - \pi_2) (x , \psi_n(x))$   is a sum of terms involving 
the partial derivatives of $R_2-\pi_2$ multiplied by
 polynomial expressions  in the derivatives of $\psi_n$ 
(which can be 
computed exactly using  the Faa Di Bruno formula). Analyzing this expression and using 
 $\fr^j(R_2 - \pi_2)(x,y) = O( {y}^{r-j})$ and $\psi_n^{(j)} = O((s u^{-j})^n)$, it is not 
difficult to convince oneself that the dominant term is the one obtained 
 by differentiating with respect to the 
first variable (i.e. the first two real variables) of $(R_2 - \pi_2)$, that is, the only term which comes with no additional
multiplicative factor. This term 
  is of order of magnitude $O(\abs{\psi_n}^{r-j}) = O(\abs{s}^{n(r-j)})$, and we conclude that 
$\big\vert{\widetilde \gamma_n^{(j)} - \psi_n^{(j)}}\big\vert \lesssim \abs{s}^{n(r-j)}$. 

We now choose the order $r$ such that for every $j\leq \ell$, ${s}^{n(r-j)} = o((su^{-j})^n)$. For this it is enough that 
$\abs{s}^{r- j}<\abs{su^{-j}}$, that is $\abs{s}^r < \abs{s (su\inv)^j}$, hence  it suffices that 
$r> (1+\alpha)\ell+1$, where $\alpha$ is such that $\abs{u}\inv = \abs{s}^\alpha$, and  the choice 
 \begin{equation}\label{eq:choice_r}
 r(\ell, \rho) = 2+ \lrpar{   1+\frac{\ln(1+\rho\inv)} {\ln(1+\rho)} } \ell
 \end{equation}
works. 

 Let us now estimate $\big\vert{ \gamma_n^{(j)} }\big\vert$. Recall that  $\widetilde \gamma_n = \gamma_n( R_1(x, \psi_n(x)))$, and that 
 at this stage we know that $\big\vert{\widetilde \gamma_n^{(j)}  }\big\vert  = O ((su^{-j})^n)$. Write $h_n (x) = R_1(x, \psi_n(x))$. This is 
 a diffeomorphism such that $h_n(x) - x =O(s^{rn})$. Arguing as above shows that 
 $  \fr h_n(x) = 1+ O(s^{n(r-1)})$ and for $2\leq j\leq r$,  $\fr^j h_n(x)  = O(s^{n(r-j)})$. It follows that the inverse diffeomorphism 
 $h_n\inv$ satisfies the same estimates. Indeed, the expression for the $j^{\rm th}$   derivative of $h_n\inv$  is a rational function whose denominator is a power of  $  \fr h_n( y)$, $y = h_n\inv(x)$,
 and whose numerator is a polynomial expression in 
 the $\fr^ih_n  (y)$, $i\leq j$, with a single term of order $j$. Plugging in the estimate $\fr^i h_n(x)  = O(s^{n(r-i)})$
 shows that $\fr^j h_n\inv(x)  = O(s^{n(r-j)})$. 
 
 Now we write $ \gamma_n = \widetilde \gamma_n \circ h_n\inv$. 
 Taking the $j^{\rm th}$ derivative of this expression gives a sum of terms of the form 
 $P_i(\fr h_n\inv, \ldots , \fr^j h_n\inv) \fr^i \widetilde \gamma_n(h_n\inv)$, with 
 $i\leq j$. Furthermore, when $i=j$, $P_j$ depends only on $\fr h_n\inv$, so the corresponding 
  term is of order of magnitude $O(\fr^j \widetilde \gamma_n) = O((su^{-j})^n)$. In 
 the remaining terms with $i<j$, all monomials of $P_i$ 
 involve higher derivatives of 
 $h_n\inv$, so they are bounded by  $O(s^{n(r-j)})$. Our choice of $r$ guarantees that $s^{n(r-j)} = o ((su^{-j})^n)$ so we conclude 
 that $\big\vert{ \gamma_n^{(j)} }\big\vert \leq C \abs{su^{-j}}^n$, as announced.

\medskip

\noindent{\emph{Step 2.  Construction of a $C^r$-linearization.}}

We   follow step by step  the proof of   Theorem~1 pp. 628-629 in~\cite{sternberg2} 
to show that there exists $k = k(r, \rho)$ 
such that if $f$ is of the form   $(\star_k)$ and satisfies  \eqref{eq:rho} and \eqref{eq:maxg}, then 
a $C^r$-diffeomorphism $R$  linearizing $f$ as in Step 1 exists. 
The assumption that $f$ is of the form 
  $(\star_k)$ is precisely the conclusion of Lemma~7 in~\cite{sternberg2}.  
   Sternberg constructs $R$    by  first   prescribing it   on some fundamental domain of $\B \setminus \set{xy = 0}$ for the action of $L$, then 
extending $R$ to   $\B \setminus \set{xy = 0}$ by the  equivariance, and finally showing that 
 $R$ together with its first $r$ derivatives approach  the identity along the axes.  (We enlarge a little bit $\B$   so that $R(\B)\supset \overline\D\times \set{0}$.)
Bounding  these derivatives relies on the   iteration  
of an  operator $D_T$ introduced in equation (19) p.~629 (all references in the next few lines are relative to \cite{sternberg2}). The order  $k$  (which is denoted by $q$ there)
is chosen at this stage, 
according to   the requirement that the estimate~(20) p.~169 holds with
$\alpha<\abs{u}^{r}$.
Thus $k$  depends only on $r$ and $\rho$, hence ultimately on $\ell$ and $\rho$. 
Then, the growth of $\norm{D_T^n }$  is governed 
by   $\alpha   = \alpha(\ell, \rho)$, $\norm{g_1}_{2\B}$ and $\norm{g_2}_{2\B}$ (see 
equations~(20) and (21) on  p.~629; $g_1$, $g_2$ correspond  to the error term $F$), and we are done. 
\end{proof}

\begin{rmk}\label{rmk:finv}
In the proof of Theorem~\ref{thm:quadratic} we will actually use this result with the stable and unstable directions reversed; of course for this it is enough to apply the result to $f\inv$. 
We prefer to stick with this presentation here to make the comparison with~\cite{sternberg2} easier.
\end{rmk}

\begin{rmk}\label{rmk:uniformity_k}
It is a much studied and difficult problem to study the dependence of the integer 
$k$ in terms of $(r, \rho)$.  No explicit estimates are  given in~\cite{sternberg2}; 
results in this direction can be found e.g.  in \cite{belitsky, sell}. 
%(it seems to follow from these results  that in our 2-dimensional setting $k$ depends only on $r$). 
\end{rmk}

\subsubsection{Comments on families}\label{subs:remark_families}
If  $f$ belongs to some family $(f_\lambda)$, the property of 
having no resonance up to a certain order is   open   in  
 parameter space. In this case    
 Proposition~\ref{prop:sternberg} shows that in this open set, 
 $f_\lambda$ is reduced to the form $(\star_k)$ 
 in a fixed neighborhood of the origin,
 by  a change of coordinates 
 depending   holomorphically on $\lambda$. 
 After proper rescaling we may assume  that $f_\lambda$ is defined in $2\B$
 and the corresponding 
 $\norm{g_{1, \lambda}}_{2\B}$ and $\norm{g_{2, \lambda}}_{2\B}$  are locally uniformly bounded, 
 and the quantity $\rho$ from Equations~\eqref{eq:rho} and~\eqref{eq:maxg} 
 can be chosen locally uniformly. 
It follows from the statement of  Proposition~\ref{prop:graph_transform} that 
the implied constant 
$C$  is locally uniformly bounded as well. \commentaire{No verification is now left to the reader in this paragraph (cf. Exposition Rmk \#3)}
 
\subsection{Dynamical slope}\label{subs:slope}
In \cite[\S 3.4]{takens}, Takens  defines a notion of \emph{angle of crossing}, that we prefer to call \emph{dynamical slope}, whose purpose is to make the estimates in the Inclination Lemma more precise, and plays an important role in the main argument. To understand the idea, let us   consider the real 
 linear case: 
if $f(x,y)  = (ux, sy)$ and $v$ is a tangent vector at $(0, y_0)$, $y_0\neq 0$, 
 with slope $m$, then its $n^{\rm th}$
  image is a tangent vector at $(0, y_n) = (0, s^ny_0) $ with slope $m_n=(su\inv)^nm$.  
So if $\alpha>1$ is such that $su\inv = s^\alpha$, we see that $ {m_n}{y_n^{-\alpha}}$ 
is an invariant quantity, which by definition is the dynamical slope. In particular, if the dynamical slope is non-zero, then $m_n\cong (su\inv)^n$.
This definition can be extended to the non-linear case by linearization,
 under an appropriate non-resonance 
assumption. 

In the complex setting, we cannot directly extend this definition (because $y_n^{-\alpha}$ does not make sense), 
so we content ourselves with explaining  that there is a well-defined notion of having \emph{non-zero} 
dynamical slope, which suffices  for our needs (see Remark~\ref{rmk:slope2} for   further discussion).   
 
For the notion of slope to make 
 sense, we need the coordinates to be properly normalized, so we will 
assume that $f\in \Diff(\C^2, 0)$ is of the form $(\star_k)$ (for some $k\geq 1$ to be determined below)
and consider the action 
of $f$ (denoted by $f_\varstar$)
on projective tangent vectors $[v]\in \P T\cd\rest{W^s_\loc(0)}$. If $[v] = [v_1:v_2]$ is not tangent 
to $W^s_\loc(0)  = \set{x=0}$ its slope is by definition $\slope([v]) = v_2/v_1$. 
%We work under a non-resonance condition, 
%and to allow for some uniformity it is convenient to express it in terms 
%of the constant $\rho$ of~\eqref{eq:rho}.

\begin{prop}\label{prop:slope}
Assume that  $f\in \Diff(\C^2, 0)$ is  of the form $(\star_k)$, 
where $k$ is such that $\abs{s}^k< \abs{u}\inv$. 
Then there exists an invariant  holomorphic section $Z$  of the bundle 
  $\P T\C^2\rest{W^s_\loc(0)}\to W^s_\loc(0)$, disjoint from $[TW^s]$,
 such that if 
$[v]\in \P T\C^2\rest{W^s_\loc(0)}$  neither  belongs to the image of $Z$ nor to $[TW^s]$, then 
$\slope (f_\varstar^n [v]) \cong (u\inv  s)^n$. 
\end{prop}
  
By definition $Z$ is the \emph{zero dynamical slope section}. Since $Z$ is uniformly transverse to 
$W^s_\loc$, it follows that any tangent vector which is sufficiently close to $TW^s_\loc$, but not tangent to it, has non-zero dynamical slope. 

\begin{rmk}\label{rmk:slope}
With the formalism of Equation~\eqref{eq:rho}, if there is no resonance up to order 
\begin{equation}\label{eq:k'}
k'(\rho) = 2 + \frac{\ln(1+\rho\inv)}{\ln(1+\rho)}, \end{equation} then 
Proposition~\ref{prop:sternberg} guarantees that 
$f$ can be put in the form $(\star_k)$,  with  $\abs{s}^k< \abs{u}\inv$, and 
Proposition~\ref{prop:slope} applies in the corresponding  coordinates. 
\end{rmk}

\begin{proof} \commentaire{Proof largely rewritten, cf. Exposition remark \#5.}
We work in coordinates where $f$ is of the form $(\star_k)$ and 
  study the dynamics of $f_\varstar$ on $\P T\C^2\rest{W^s_\loc(0)}$, which is a $\P^1$-bundle over the disk. The ``central fiber'' $\P T_0\cd$ is globally attracting and contains two fixed points, one saddle 
$(0, [E^s])$ corresponding to the stable direction and one attracting $(0, [E^u])$
corresponding to the unstable direction. Fix local coordinates $(y,m)$ near $(0, [E^u])$, where 
$m$ is the slope. Since the differential of $f$ along $W^s_\loc(0) = \set{x=0}$ is of the form 
$$df_{(0,y)}  = 
\begin{pmatrix}
u(1+yg_1(0,y)) & 0 \\
syg_2(0,y) & s(1+yg_2(0,y))
\end{pmatrix} $$ 
we infer that 
$f_\varstar$ 
 is expressed as  $f_\varstar (y,m) = (sy, (  s u\inv) m +h(y,m))$, with $h\in \mathfrak{M}^{k+1}$. 
 
Thanks to our assumption on $k$, the Poincaré-Dulac theorem guarantees that 
$f_\varstar$ can be linearized at the origin. The precise form 
of the linearizing map is important in the argument: fortunately, due to the special form of $f$, 
the linearization is easily obtained as follows.  Let 
$L(y,m) = (sy,(su\inv)m)$ and write $L^{-n}f_\varstar^n (y,m) = (y, \varphi_n(y,m))$. Put also 
$f_\varstar^n (y,m) = (y_n, m_n)$
Then 
\begin{equation}
\varphi_{n+1}(y,m)= (su\inv)^{-(n+1)}  m_{n+1} 
= (su\inv)^{-(n+1)} \lrpar{(su\inv)m_n + h(y_n, m_n)}
\end{equation} hence
\begin{equation}\label{eq:PD}
\varphi_{n+1}(y,m)  - \varphi_n(y,m) = (su\inv)^{-(n+1)} h(y_n, m_n).
\end{equation}
Since $\abs{su\inv}<\abs{s}$, $\norm{(y_n, m_n)} = O(\abs{s}^n)$, so 
$\norm{h(y_n, m_n)} = O(\abs{s}^{(k+1)n})$, and since  
$\abs{s}^{k+1}< \abs{su\inv}$,  we deduce from~\eqref{eq:PD} that 
$\abs{\varphi_{n+1} - \varphi_n}$ converges exponentially to 0. Therefore 
$L^{-n}f_\varstar^n$ converges to the linearizing map  
\begin{equation}\label{eq:PD2}
\Phi \colon (y,m)\longmapsto %\lrpar{y, \varphi_1(y,m) +\sum_{n=1}^\infty \varphi_{n+1}(y,m)  - \varphi_n(y,m)}=
  \lrpar{y, m+ (su\inv)\inv h(y,m) + \sum_{n=1}^\infty (su\inv)^{-(n+1)} h(y_n, m_n)} .
\end{equation}
 
In the linearizing coordinates,  the dynamics becomes $L:(y', m')\mapsto (sy', (su\inv) m')$, so if 
$m'\neq 0$, the second coordinate  of $L^n(y',m')$ decays like $c(su\inv)^n$ with $c\neq 0$. 
Back to the initial coordinates, since by~\eqref{eq:PD2} the linearizing map 
$\Phi$ is of the form 
$\Phi(y,m) = (y,m+\varphi(y,m))$  with 
$\varphi\in \mathfrak{M}^{k+1}$ and $\abs{s}^{k+1}< \abs{su\inv}$,
this estimate is not spoiled by terms coming from $\varphi$, and  
we conclude  that there is an 
invariant  holomorphic curve $Z= W^{ws}(0, [E^u])$ transverse to the central fiber (associated to the ``slow'' eigenvalue $s$)  such that if 
$(y,m)\notin Z$ then $m_n\cong (su\inv) ^n$. Thus we have defined the announced 
section $Z$  in some neighborhood of $(0, [E^u])$, and we extend it to $\P T\C^2\rest{W^s_\loc(0)}$ 
by pulling back. Finally, to finish the proof, it is enough to observe that 
any $v\in \P T\C^2\rest{W^s_\loc(0)}$ not 
tangent to $\set{x=0}$ is eventually attracted by the unstable direction, so the previous analysis applies. 
\end{proof}
  
\begin{rmk} \commentaire{New remark, cf Exposition remark \#5}
In the dissipative case we have $\abs{s}<\abs{u}\inv$, and $f$ can be put in   $(\star_1)$ form so Proposition~\ref{prop:slope} applies without further assumption. 
\end{rmk}
  
\begin{rmk}\label{rmk:slope_family}\commentaire{New remark for Exposition remark \#3}
If $(f_{\la})_{\la\in \Lambda}$ is a holomorphic family of diffeomorphisms 
of the form $(\star_k)$, with $\abs{s_\lambda}^k<\abs{u_\lambda}\inv$, then 
 it is clear from the proof that the zero slope section $Z$ depends holomorphically on $\lambda$. 
\end{rmk}

\begin{rmk}\label{rmk:slope2}
Even if it is not clear how to define the dynamical slope as a number in the complex case, 
the notion of two tangent directions  
having the same dynamical slope does make sense. Indeed, with notation as in the previous proof, we can pull back the foliation $\set{m' = \cst}$ by the linearizing coordinates, which   defines sections of constant dynamical slope in $\P T\C^2\rest{W^s_\loc(0)}$. Still, it is not obvious to decide whether two 
tangent directions at different points of $W^s_\loc(0)$ have the same slope or not. Since the idea of taking distinct slopes plays an important role in~\cite{takens}, we have to modify the concluding
  argument  in  the proof of Proposition~\ref{prop:main} so that considering one  non-zero slope is sufficient.   
\end{rmk}

%remarque pour moi: pas complètement clair ce qu'on peut dire dans le cas résonant. Il semble que la 
%section Z n'existe pas et que toutes les pentes décroissent comme $(su\inv)^n$ (à vérifier). 

\subsection{Reduction of Theorem~\ref{thm:quadratic} to  Proposition~\ref{prop:main}} \label{subs:reduction} ~ \commentaire{Subsection rewritten completely and many details added for Exposition Remark \#2}
Suppose that $(f_\lambda)_{\lambda\in \Lambda}$ is a family
admitting a non-persistent homoclinic tangency at $\lambda_0=0$, of order $h_0$,
associated to a saddle fixed point $p$ 
that is not persistently resonant, as in Theorem~\ref{thm:quadratic}. It is enough to treat the case where 
$\Lambda\simeq \D$. 
%Recall that by assumption there is no persistent resonance between $u_\lambda$ and $s_\lambda$. 
%We keep the notation and conventions of \S\ref{subs:conventions}, so
%in particular we always work in local coordinates such that $p_\lambda = (0,0)$, 
%$W^u_\loc(p_\lambda) = \set{y=0}$ and $W^s_\loc(p_\lambda)  = \set{x=0}$, 
%and there is a disk $\Delta^u_\lambda \subset W^u(p_\lambda)$ such that 
%$\Delta^u_0$ is tangent to  $\set{x=0}$ at $(0, y_0)$. Rescaling the first coordinate in $\B$ if needed, 
% we may assume that $\Delta^u_0$ is a horizontal submanifold in $\B$ of degree $h_0+1$ with a unique vertical tangency (with $\set{x=0}$), which
%  admits a continuation $\Delta^u_\la$ as a horizontal submanifold of degree $h_0+1$
%in $\B$ throughout $\Lambda$, and also that the vertical tangencies escape $\B$ in the sense that for $\lambda$ close to $\fr\Lambda$, $\Delta^u_\la$ is a union of $h_0+1$ horizontal graphs. 

Since by Proposition~\ref{prop:secondary} 
there are also  transverse homoclinic intersections at $\lambda=0$, 
 by unfolding the tangency  as described in the introduction,
we obtain infinitely  many secondary tangency parameters  
(still associated to  $W^{s/u}(p_\lambda)$). 
%For every such   parameter we may    
%apply   the  reductions of  \S\ref{subs:conventions}, and  put ourselves under 
%Normalization~\ref{norm:tang}.   

Our purpose in this paragraph is to show that in any neighborhood of $\lambda_0$ 
we can construct a    special
  homoclinic  tangency parameter $\lambda^\varstar$ 
and a topological disk $\Lambda^\varstar\subset \Lambda$ containing $\lambda^\varstar$ 
such that after putting $f_{\lambda^\varstar}$ in Normalization~\ref{norm:tang}, 
the following properties (P1-5) hold. 
Reducing $\Lambda$ if necessary, 
we fix  a constant $\rho>0$   such that the estimate \eqref{eq:rho} holds for every 
$\lambda\in \Lambda$.  Let $k =\max (k(h_0+1, \rho), k'(\rho))$, 
where these numbers are respectively defined in 
 Proposition~\ref{prop:graph_transform} and Remark~\ref{rmk:slope}.

\begin{itemize}
\item [(P1)] \textit{for every $\lambda\in \Lambda^\varstar$, $f_\la$ is of the form $(\star_k)$ and satisfies \eqref{eq:maxg}.}
\item [(P2)]  \textit{There is a horseshoe $\mathcal H_\lambda$, moving holomorphically with $\lambda\in \Lambda^\varstar$, whose local stable manifolds in $\B$ are vertical graphs 
accumulating $W^s_\loc(p_\lambda) = \set{x=0}$ and having non-zero dynamical slope at their intersection with  $W^u_\loc(p_\lambda)$.}
\end{itemize}

The  components of  $W^s(p)\cap \B$ contained in the semi-local stable manifolds of 
 $\mathcal H_\lambda$  form a countable dense subset of the local stable lamination 
 $W^s_\loc(\mathcal H_\lambda)$, which we enumerate as $(W^{s,i}_\lambda)_{i\in \N}$ 
 (each depending holomorphically on $\lambda$). 
% 
% \begin{itemize}
%\item [(P3)] For every $i$, there is a tangency between $\Delta^u_\lambda$ and 
%branch $W^{s,i}_\lambda$.
%\end{itemize}
% 
 Let $\mathcal T\subset\Lambda^\varstar$ be the   locus where a tangency occurs 
between $\Delta^u_\lambda$ and a branch
 $W^{s,i}_\lambda$.
\begin{itemize}
 \item[(P3)] \textit{For every $\lambda \in \mathcal T$ the   order of tangency between
  $\Delta^u_\lambda$ and the corresponding  branch
 $W^{s,i}_\lambda$
 is equal to a constant $h$ and there is a unique point of tangency.}
 \item[(P4)] \textit{The parameter of tangency between $\Delta^u_\lambda$ and  
 $W^{s,i}_\lambda$ is unique, and the multiplicity of tangency is equal to a constant $m$. }
\end{itemize}
 
 The proof will proceed by   successively choosing 
    tangency parameters $\lambda_i$ and corresponding (shrinking) neighborhoods
   $\Lambda_i$ to secure these properties one after the other. Each 
  time we will consider a suitable iterate of the tangency point to bring it back to 
  $W^s_\loc(p_\lambda)$ and  renormalize the coordinates. 

Start with $\lambda_0 = 0$, fix a disk $\Lambda_0$ containing $\lambda_0$
 and assume that Normalization~\ref{norm:tang} 
holds for the initial tangency. 

By Proposition~\ref{prop:secondary},   there is a 
transverse   intersection between $W^s(p_0)$ and $W^u(p_0)$ at some point $\eta_0$
which by Smale's procedure allows to  creates a horseshoe $\mathcal H_{0,0}$ 
invariant by $f_0^N$ for some $N$. 
Taking some (negative) iterate, the intersection point can be chosen to belong to 
a small neighborhood of 0 in 
 $W^u_\loc(0)$, and increasing $N$,  
the horseshoe can be chosen to be arbitrarily close to 
$\set{0,  \eta_0}$. In particular the 
  the semi-local stable manifolds of the
 horseshoe form a Cantor set of  vertical graphs in  
 $\B$,  close to $\set{x=0}$  and accumulating it. Reduce $\Lambda_0$ if needed so that 
$\mathcal H_{0,0}$  can be followed holomorphically as $\mathcal H_{0, \lambda}$ in $\Lambda_0$. 

Reducing  $\Lambda_0$ again  if necessary, 
  we may assume that   
$\Delta^u_\la$ as a horizontal submanifold of degree $h_0+1$ for every 
$\lambda\in \Lambda_0$.
and also that for $ \la\in \Lambda_0\setminus\set{\lambda_0}$, $\Delta^u_\la$ is transverse to $\set{x=0}$. 
Hence,  by  the  compactness of $\fr\Lambda_0$, 
 there is a  uniform $r$ such that  for $\lambda\in \fr\Lambda_0$,
 $\Delta^u_\lambda\cap D(0,r)\times \D$ is a union 
of $h_0+1$ graphs over $D(0,r)$, and by the inclination lemma, there is a uniform 
$M$ such that $f^M_\lambda (\Delta^u_\lambda)\cap \B$ is a union of graphs. Therefore, 
replacing $\Delta^u_\lambda$ by $f^M_\lambda (\Delta^u_\lambda)\cap \B$, which does not affect 
Normalization~\ref{norm:tang}, 
we may assume that 
 the vertical tangencies escape $\B$ in the sense that for $\lambda$ close to 
 $\fr\Lambda_0$, $\Delta^u_\la$ is a union of $h_0+1$ horizontal graphs. 
 
 As above, denote by $(W^{s,i}_\lambda)_{i\in \N}$
the  components of  $W^s(p)\cap \B$ contained in the semi-local stable manifolds of 
 $\mathcal H_{0,\lambda}$, and note that since the   tangencies escape $\B$ for $\la\in \fr\Lambda_0$, no tangency
 between $\Delta^u_\lambda$ and some $W^{s,i}_\lambda$ can be persistent. Denote by 
 $\mathcal T_0$ the union of all these tangency parameters.

 \begin{lem}\label{lem:perfect}
 $\mathcal T_0$ is a countable perfect set. 
 \end{lem}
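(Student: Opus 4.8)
The plan is to establish the three assertions separately: that $\mathcal{T}$ is nonempty, that it is closed and discrete-free (perfect), and that it is countable. Countability is immediate since the $W^s_i$ are countably many and, by Lemma~\ref{lem:creating_tangencies} (or Lemma~\ref{lem:riemann-hurwitz}), for each fixed $i$ the set of $\lambda$ with $\Delta^u_\lambda$ tangent to $W^s_i$ is finite; hence $\mathcal{T}$ is a countable union of finite sets. Nonemptiness follows by applying Lemma~\ref{lem:creating_tangencies} to the family $(\Delta^u_\lambda)$ and a single well-chosen vertical graph $W^s_i$: near $\fr\Lambda$, $\Delta^u_\lambda$ is a union of horizontal graphs, while at $\lambda_0$ it has a vertical tangency with $\set{x=0}$, which is accumulated by the $W^s_i$; so for $i$ large enough $\Delta^u_{\lambda_0}$ fails to be a union of graphs avoiding $W^s_i$ (it either meets $W^s_i$ non-transversally or the relevant counting of intersection points breaks), and the lemma produces a tangency parameter.

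\textbf{The perfect set property.} First I would show $\mathcal{T}$ is closed. The natural candidate argument is: if $\lambda_k \to \lambda_\infty$ with $\Delta^u_{\lambda_k}$ tangent to some $W^s_{i_k}$, then either infinitely many $i_k$ coincide — in which case closedness of the tangency locus for a fixed $i$ (an analytic subvariety, by the lifting formalism of \S\ref{subs:lifting} applied to $\widehat{\P T\Delta^u}$ and $\widehat{\P T W^s_i}$) gives $\lambda_\infty \in \mathcal{T}$ — or the $i_k$ escape, meaning the graphs $W^s_{i_k}$ accumulate on $\set{x=0}$ (or on another stable leaf), and a limiting argument using the degree-$(h_0+1)$ bound and the persistence of the vertical tangency of $\Delta^u_{\lambda_\infty}$ with $\set{x=0}$ forces a tangency with a genuine component $W^s_i$ near $\lambda_\infty$. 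I would then use Lemma~\ref{lem:creating_tangencies} locally near $\lambda_\infty$ to land this tangency at an honest parameter, concluding $\lambda_\infty \in \overline{\mathcal{T}} = \mathcal{T}$. The key structural input is that the $(W^s_i)$ are \emph{dense} among the stable lamination of the horseshoe, so any tangency parameter is approached by other tangency parameters.

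\textbf{No isolated points.} This is the heart of the matter. Fix $\lambda_* \in \mathcal{T}$, so $\Delta^u_{\lambda_*}$ is tangent to some $W^s_{i_0}$ at a point $q$. By density of the $(W^s_i)$ in the stable lamination, pick a sequence $W^s_{j} \to W^s_{i_0}$ of nearby stable graphs. For each such $j$, apply Lemma~\ref{lem:creating_tangencies} on a small parameter disk around $\lambda_*$: near the boundary of this disk $\Delta^u_\lambda$ is (after shrinking) transverse to $W^s_j$ or has the "generic" intersection count, while the existence of the tangency with $W^s_{i_0}$ at $\lambda_*$, combined with $W^s_j$ being squeezed against $W^s_{i_0}$, forces the graph-property to fail somewhere inside; the lemma then yields a tangency parameter $\lambda_j \in \mathcal{T}$, distinct from $\lambda_*$ (since the tangency is with a different component), and with $\lambda_j \to \lambda_*$ as $j \to \infty$ by a normal-families/continuity argument.

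\textbf{Main obstacle.} The delicate point will be making the application of Lemma~\ref{lem:creating_tangencies} rigorous in the perfectness argument — specifically, verifying its hypothesis that $\Delta^u_\lambda$ is a union of horizontal \emph{graphs} near the boundary of the small parameter disk around $\lambda_*$. This need not hold automatically, since $\Delta^u_\lambda$ may retain vertical tangencies with $\set{x=0}$ throughout a neighborhood of $\lambda_0$ (the homoclinic tangency at $\lambda_0$ persists as a vertical tangency even when the homoclinic relation breaks). One must instead apply the lemma to a suitable sub-piece of $\Delta^u_\lambda$ — e.g. after restricting to a thin sub-bidisk around $W^s_{i_0}$ where $\Delta^u_\lambda$ decomposes, or by tracking one branch of the vertical tangency — and argue that on the boundary of the small parameter disk this piece genuinely separates into graphs relative to the slightly-shrunk bidisk. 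Handling this bookkeeping carefully, while keeping the degree count $(h_0+1)$ under control, is where the real work lies; the rest is a routine combination of analyticity, Riemann–Hurwitz (Lemma~\ref{lem:riemann-hurwitz}), and the tangency-creation lemma.
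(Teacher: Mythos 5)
Your countability and non-emptiness steps are fine, and in fact simpler than you make them: the hypotheses of Lemma~\ref{lem:creating_tangencies} concern only the family $(\Delta^u_\lambda)$ (union of horizontal graphs near $\fr\Lambda$, failure of this at $\lambda_0$), which is exactly the normalization arranged in \S\ref{subs:reduction}; so the lemma applies to \emph{every} $W^s_i$ at once and gives each $\mathcal T_i$ non-empty and finite, with no need to choose $i$ large or ``well-chosen''. The genuine gap is in the heart of the matter, the absence of isolated points: you propose to apply Lemma~\ref{lem:creating_tangencies} on a small parameter disk around $\lambda_*$ and then concede, under ``main obstacle'', that you cannot verify its hypothesis there. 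That concession is precisely the difficulty, and it is not mere bookkeeping: near $\lambda_*$ the vertical tangency of $\Delta^u_\lambda$ in general persists (only the tangency with the specific leaf $W^s_{i_0}$ is non-persistent) and may move arbitrarily slowly, since degenerate unfolding is exactly what cannot be excluded at this stage of the paper; hence for a fixed small parameter disk there is no way to choose a sub-bidisk around $W^s_{i_0}$ so that $\Delta^u_\lambda$ separates into graphs at the boundary parameters. The paper's proof sidesteps this entirely: a point of $\mathcal T_{i_0}$ is an isolated, hence proper, intersection point of $\widehat{\P T W^s_{i_0}}$ and $\widehat{\P T \Delta^u}$ in $\Lambda\times\B\times\pu$; taking $W^s_{i_j}\to W^s_{i_0}$ (density of the components in the stable lamination of the horseshoe), the \emph{persistence of proper intersections} produces points of $\widehat{\P T W^s_{i_j}}\cap\widehat{\P T \Delta^u}$ converging to it, and the finite projection to $\Lambda$ yields parameters of $\mathcal T_{i_j}$ accumulating $\lambda_*$ (only finitely many of which can equal $\lambda_*$, by the bound of Lemma~\ref{lem:riemann-hurwitz} at the fixed parameter $\lambda_*$). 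You invoke the lifting formalism of \S\ref{subs:lifting} for closedness, but never use its key payoff --- persistence of proper intersections --- where it is actually needed.

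A secondary problem is your reading of ``perfect'' as including closedness. A countable subset of $\C$ without isolated points cannot be closed (a non-empty perfect set in a complete metric space is uncountable), and indeed $\overline{\mathcal T}$ contains tangency parameters with limit leaves of the lamination that are not among the chosen components $W^s_i$; what is meant, and what the paper proves, is that $\mathcal T$ has no isolated points. Your closedness paragraph is moreover circular at its end (concluding $\lambda_\infty\in\overline{\mathcal T}=\mathcal T$ presupposes the closedness being proved). So that step should be dropped, and the perfectness sketch replaced by the proper-intersection argument above.
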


\begin{proof}
Denote by $\mathcal T_{0,i}\subset \Lambda$   the  set of parameters for which  a  
tangency occurs between $\Delta^u_\la$ and  $W^s_{i, \lambda}$ so that 
$\mathcal T_0 = \bigcup_i \mathcal T_{0,i}$. By Lemma \ref{lem:creating_tangencies}, 
each $\mathcal T_{0,i}$ is a non-empty finite set, so $\mathcal T_0$ is countable. 
 Since $\bigcup W^{s,i}$ is dense in a Cantor set of graphs, it is perfect. Thus, given $i_0\in \N$,   there exists a sequence $(i_j)$ such that $W^{s,{i_j}}$ converges to $W^{s,{i_0}}$, hence $\widehat{\P T W}^{s,{i_j}}$ converges to $\widehat{\P T W}^{s,{i_0}}$ (notation as in \S\ref{subs:lifting}). The persistence of proper intersection shows that   $\widehat{\P T W}^{s,{i_0}}\cap \widehat{\P T \Delta^u}$ is accumulated 
 by  $\widehat{\P T W}^{s,{i_j}}\cap \widehat{\P T \Delta^u}$, and since the projection to
  $\Lambda$ is finite we infer that 
 any point of $\mathcal T_{0,i_0}$ is accumulated by the $\mathcal T_{0,i_j}$, 
 so $\mathcal T_0$ is perfect, as announced. 
\end{proof}

With $k$ chosen as above, since there is no 
persistent resonance between the multipliers of $p_\lambda$, there is a locally 
finite subset $F_k$ of $\Lambda$ such that outside $F_k$ there is no 
resonance up to order $k+1$.  Since $\mathcal T_0$ is perfect, 
 $\mathcal T_0\setminus F_k$     is relatively open and dense in $\mathcal T_0$.
Pick  $\lambda_1\in \mathcal T_0\setminus F_k$  and a   disk 
$\Lambda_1 $  containing    $\lambda_1$,  contained in $\Lambda_0$ and disjoint from $F_k$.  By Proposition~\ref{prop:sternberg}, for $\lambda \in \Lambda_1$
we can choose local coordinates near $p_\lambda$, 
depending holomorphically on $\lambda$,  
in which $f_\lambda$ is of the form  $(\star_k)$. Taking a smaller 
neighborhood and rescaling it, we may assume that $f_\lambda$ is defined in $2\B$ and 
\eqref{eq:maxg} holds there. Replacing the  tangency point and 
  $\Delta^u_\lambda$ by a sufficiently large iterate (here and in the following for convenience 
  we still denote this iterate  by $\Delta^u_\lambda$)  we may assume that Normalization~\ref{norm:tang} holds at $\lambda_1$ in the new coordinates, and that    the vertical tangencies of $\Delta^u_\lambda$ 
  escape $\B$  along $\fr\Lambda_1$. At this stage, Property (P1) is satisfied. The bidisk $\B$ and corresponding coordinates will not be changed anymore.

For $\lambda\in \Lambda_1$ the notion of dynamical slope is 
well defined, and by Remark~\ref{rmk:near_tangencies} 
there is a transverse intersection  $\xi_{\lambda_1}$ between 
$W^s(p_{\la_1})$ and $W^u_\loc(p_{\la_1})$ 
whose dynamical slope is non-zero (see the comments after 
Proposition~\ref{prop:slope}). As before we choose $\xi_{\lambda_1}$ sufficiently close to 0 
so that its semi-local stable manifold is  a vertical graph.  
Given this intersection, we can repeat
 the horseshoe construction from the above paragraph, to get a new horseshoe $\mathcal H$ (this will be the final one)
 with the additional property 
that the stable manifolds of the horseshoe have non-zero dynamical slope. 
Indeed, if the horseshoe is sufficiently thin (which implies that the iterate $N$ is large), 
all the semi-local stable manifolds in the ``half  piece'' of $\mathcal H$ close to $\xi_{\lambda_1}$  
 intersect $W^u_\loc(p_{\lambda_1})$ with a non-zero dynamical slope. Then 
 by invariance we infer that   the same holds for all semi-local stable manifolds of the horseshoe, 
 except for $W^s_\loc(p_{\la_1})$.% (\footnote{We will actually only need the existence of infinitely many $f^N$ orbits of branches with non-zero slope}). 

Since the property of having non-zero slope is open (see Remark~\ref{rmk:slope_family}), 
we can find     an open 
neighborhood $\Lambda_2\subset \Lambda_1\setminus F_k$\commentaire{cf. Minor comment \#4}
of $\lambda_1$ 
where $\mathcal H$ can be followed holomorphically and the slope remains non-zero. Take some further iterate of $\Delta^u_\lambda$ so that   $\Delta^u_\lambda$ is a union of disks for $\lambda\in \fr\Lambda_2$. 
 Properties (P1) and  (P2) are now satisfied in $\Lambda_2$.
   For convenience replace $f_\la$ by $f_\la^N$ so that 
  $\mathcal H_{\lambda}$ is fixed by $f_\lambda$.  
  
We let  $\mathcal T_2\subset \Lambda_2$ be the new corresponding 
  tangency locus. Note that since the vertical tangencies of 
  $\Delta^u_\lambda$ escape $\B$ 
at $\fr\Lambda_2$, all these tangencies are non-persistent,  therefore, 
 exactly as for $\mathcal T_0$,
   Lemma~\ref{lem:perfect} shows that  $\mathcal T_2$ is a countable perfect set. 
For $\lambda\in \mathcal T_2$ there are  a number of tangencies between $\Delta^u_\lambda$ and
$W^s(p)$. Pick $\lambda_3\in \mathcal T_2$ together with    
a tangency point $t(\lambda_3) \in \Delta^u_{\lambda_2}\cap W^s_{\lambda_2}(p_{\lambda_2})$
whose order is minimal among all tangencies appearing in $\mathcal T_2$. Denote this order 
by $h$. 

Take some iterate of $t(\lambda_3)$ (and of $\Delta^u_{\lambda_3}$) so that Normalization~\ref{norm:tang} holds back again, and reduce $\Lambda_2$ to some $\Lambda_3$ so that the new
$\Delta^u_\lambda$ is a horizontal disk of degree $h$ for $\lambda\in \Lambda_3$, 
and let $\mathcal T_3 = \mathcal T_2\cap \Lambda_3$. 
 For $\lambda\in \mathcal T_3$, $\Delta^u_\lambda$ is a horizontal manifold of degree 
 $h+1$ in  $\B$, with a tangency with a semi-local stable manifold that  is a
  vertical graph, 
 thus by Lemma~\ref{lem:riemann-hurwitz}
this tangency is necessarily of order 
 $\leq h$. Since $h$ was chosen to be minimal, the order of tangency 
 is equal to $h$, 
 and again by  Lemma~\ref{lem:riemann-hurwitz}, the tangency point is unique, so (P3) holds. 
Observe that by uniqueness and the fact that $\mathcal T_3$ is perfect, 
the tangency point  moves continuously 
 with $\lambda\in \mathcal T_3$. 
 
 Now we minimize the multiplicity. As before, enumerate as $(W^s_i)$ the vertical 
 components of $W^s(p)$ contained in $\mathcal H$. For every   $i$, 
 $\widehat{\P T\Delta^u}\cap \widehat{\P T W^s_i}$, if non-empty,  consists of  one or several points 
 (which must then correspond to  distinct parameters since for fixed
  $\lambda\in\Lambda_3$ the tangency point is unique), 
 with an associated multiplicity. Pick a parameter $\lambda_4\in \mathcal T_3$
 and an intersection point where this multiplicity is minimal and denote it by $m$.   
 Then by upper-semicontinuity of the  multiplicity,
  there is an open disk $\Lambda_4\subset \Lambda_3$ around $\lambda_4$ in which 
 all tangencies  have minimal multiplicity $m(\lambda)\equiv m$ (and order $h$). 
 Furthemore, restricting $\Lambda_4$ again if necessary, the tangency 
 parameter is unique\commentaire{cf. Minor comment \#5}. Indeed, for $\lambda$ close to $\lambda_4$, the intersection point of $\widehat{\P T\Delta^u}$ and  $\widehat{\P T W^s_i}$, which has 
  minimal multiplicity in $\Lambda\times \B\times \P^1$, cannot split into several points of smaller multiplicity. Thus, property (P4) holds. 
 
The reduction is now complete, and we   put $\lambda^\varstar = \lambda_4$. 
 For the last time, we iterate the tangency point and the component $\Delta^u_\lambda$, 
  and  reduce $\Lambda_4$ if necessary to a smaller disk $\Lambda^\varstar$ 
 so that Normalization~\ref{norm:tang} holds at $\lambda^\varstar$, 
and  the new $\Delta^u_\lambda$ is a horizontal disk in $\B$ of degree $h$. Put $\mathcal T = \Lambda^\varstar\cap \mathcal T_3$. 
 
 Theorem~\ref{thm:quadratic} then reduces to the following proposition:
  
\begin{prop}\label{prop:main}
With notation as above, $h = m=1$.   
\end{prop}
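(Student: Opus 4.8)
The plan is to argue by contradiction: suppose that the minimal order $h$ and minimal multiplicity $m$ realized in $\mathcal T_3$ satisfy $m>1$ (which covers both $h>1$ and the case $h=1$ with nontrivial speed exponent, since $m=h=1$ is exactly what we want to rule out the failure of). Working with the fixed horseshoe, whose semi-local stable manifolds $(W^s_i)$ form a Cantor set of vertical graphs with \emph{non-zero dynamical slope} accumulating $W^s_\loc(p)=\set{x=0}$, the strategy is to take a parameter $\lambda\in\mathcal T_3$ with its unique tangency point $t(\lambda)\in\Delta^u_\lambda\cap W^s_\loc(p)$, and apply the inclination lemma / graph transform to push $\Delta^u_\lambda$ forward under $f_\lambda^n$. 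By Proposition~\ref{prop:graph_transform} (applied to $f\inv$, per Remark~\ref{rmk:finv}), in suitable $C^\ell$-linearizing coordinates the pushed-forward manifold $\el^n\Delta^u_\lambda$ has higher derivatives decaying like $\abs{su^{-j}}^n$, so it becomes extremely flat; near $\set{x=0}$ it looks, to high order, like a small perturbation of $W^s_\loc(p)$. The point is that $\el^n\Delta^u_\lambda$ is still a horizontal submanifold of degree $h+1$ in $\B$, carrying the tangency of order $h$ with $\set{x=0}$.

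The crux is a \emph{comparison of the contact of $\Delta^u$ with $W^s_\loc(p)$ against its contact with the nearby $W^s_i$}. Here the dynamical slope enters: because the stable lamination of the horseshoe has non-zero dynamical slope, the $W^s_i$ do \emph{not} converge to $\set{x=0}$ in the $C^1$-sense with the same ``profile'' — their tangent directions along $W^s_\loc$ differ from $[TW^s]$ at a definite (non-zero-slope) rate, whereas after many forward iterates the tangent direction of $\el^n\Delta^u_\lambda$ at the tangency point is $\cong (u\inv s)^n$, i.e. also approaches $[TW^s]$, but the two rates are governed by the \emph{same} exponent $(u\inv s)^n$ (Proposition~\ref{prop:slope}) \emph{only} for vectors of non-zero dynamical slope. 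The key asymmetry Takens exploits — and which we import — is that $\Delta^u$ has, by construction, a \emph{zero}-dynamical-slope tangency with $W^s_\loc$ (it is genuinely tangent there), so after iteration its slope decays \emph{strictly faster} than $(u\inv s)^n$, while the $W^s_i$ keep non-zero dynamical slope. Consequently, for $n$ large, when we transport everything back by $f^{-n}$ and look at the tangency between $\Delta^u_\lambda$ (equivalently $\el^n\Delta^u_\lambda$) and an appropriately chosen $W^s_{i}$ close to $\set{x=0}$, the geometry forces the contact order to drop: a degree-$(h+1)$ horizontal manifold that is $C^\ell$-close to a graph and meets a nearby vertical graph of non-zero slope must do so with order $<h$, unless $h=1$; and when $h=1$ a finer version of the same estimate, now tracking the $\lambda$-derivative, shows the speed exponent must be $1$, i.e. $m=1$. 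Making this quantitative is where Lemma~\ref{lem:riemann-hurwitz} and Lemma~\ref{lem:6.4} do the bookkeeping: the degree-$(h+1)$ horizontal manifold $\el^n\Delta^u_\lambda$ can have at most $h$ vertical tangencies total, and if the contact with $\set{x=0}$ absorbed order $h$ while a nearby $W^s_i$ is forced to be tangent too, we exceed the budget unless orders collapse.

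Concretely, the steps I would carry out are: (1) fix $\lambda_3$, pass to $C^\ell$-linearizing coordinates for $f_{\lambda_3}\inv$ flat to high order along the axes (Propositions~\ref{prop:sternberg}, \ref{prop:graph_transform}); (2) for a parameter $\lambda\in\mathcal T_3$ near $\lambda_3$, express $\Delta^u_\lambda=\set{x=\varphi_\lambda(y)}$ and compute the forward graph-transform $\el^n\Delta^u_\lambda=\set{x=\varphi_{\lambda,n}(y)}$, getting the derivative bounds $\norm{\varphi_{\lambda,n}^{(j)}}\le C\abs{su^{-j}}^n$ uniformly in $\lambda$ (§\ref{subs:remark_families}); (3) locate a stable leaf $W^s_{i(n)}$ of the horseshoe within distance $\sim$ (something like $\abs{s}^n$) of $\set{x=0}$ at the relevant height, using density of $\bigcup W^s_i$ in the Cantor set, and write it as a vertical graph $x=\omega_{i(n),\lambda}(y)$ whose $y$-derivative at the relevant point is bounded below in modulus by a constant times $\abs{su^{-1}}^n$ (non-zero dynamical slope — this is the uniform lower bound that makes the comparison work); (4) compare the contact of $\el^n\Delta^u_\lambda$ with $\set{x=0}$ versus with $W^s_{i(n),\lambda}$: since $\varphi_{\lambda,n}$ has a zero-dynamical-slope tangency with $\set{x=0}$, its first derivative at the tangency is $o(\abs{su^{-1}}^n)$, strictly below the derivative of $\omega_{i(n),\lambda}$, forcing a transverse-type crossing that, combined with the degree-$(h+1)$ constraint and Lemma~\ref{lem:riemann-hurwitz}, produces a tangency parameter in $\mathcal T_3$ of order $<h$ (contradiction if $h\ge2$), or, when $h=1$, by differentiating the crossing equation in $\lambda$ and using that $\mathcal T_3$ is perfect, a tangency of multiplicity $<m$ (contradiction if $m\ge2$); (5) conclude $h=m=1$, and note the positive-speed assertion is then the statement $m=h=1$, i.e. transversality of $\widehat{\P TW^s}$ and $\widehat{\P T\Delta^u}$ as in §\ref{subs:speed}.

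The main obstacle I anticipate is Step~4, the careful quantitative comparison: one must juggle three small quantities with different exponential rates — the flatness of $\el^n\Delta^u_\lambda$ (rate $\abs{su^{-j}}^n$), the distance of the chosen leaf $W^s_{i(n)}$ to $\set{x=0}$, and the dynamical-slope lower bound on that leaf (rate $\abs{su^{-1}}^n$) — and show that for $n$ large they combine to violate the tangency-order budget of Lemma~\ref{lem:riemann-hurwitz}. The subtlety flagged in Remark~\ref{rmk:slope} — that in the complex setting one cannot freely choose \emph{two distinct} non-zero slopes and compare, as Takens does — means the concluding argument must be reorganized to use a \emph{single} non-zero reference slope (the zero-dynamical-slope section $Z$ of Proposition~\ref{prop:slope}) against which both $\Delta^u$ (on the zero side) and the horseshoe leaves (on the non-zero side) are measured; getting this reorganization to actually close the estimate, uniformly in the parameter $\lambda\in\mathcal T_3$ so that one genuinely lands a new tangency parameter of smaller order or multiplicity rather than merely a non-parameter point, is the delicate heart of the proof.
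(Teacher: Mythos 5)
There is a genuine gap, and it lies exactly where you locate the ``delicate heart'': your Step~4 has no mechanism behind it, and the contradiction you aim for is not the one the argument can actually reach. You propose to convert the slope mismatch (flatness of the iterated unstable disk versus the non-zero dynamical slope of a nearby horseshoe leaf) into \emph{a tangency of order $<h$} at a nearby parameter, contradicting minimality. But a slope mismatch only yields transverse crossings, and transverse crossings contradict nothing --- $\Delta^u_\lambda$ meets every vertical leaf in $h+1$ points anyway, almost all of them transverse. Moreover your use of the dynamical slope is off: Proposition~\ref{prop:slope} only defines (non)zero dynamical slope for directions \emph{not} tangent to $W^s_\loc$, so the phrase ``$\Delta^u$ has a zero-dynamical-slope tangency with $W^s_\loc$, hence its slope decays strictly faster than $(u\inv s)^n$'' is not something the machinery gives you. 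In the paper the contradiction for $h\geq 2$ is not geometric but \emph{arithmetic}: one works at the tangency parameters $\lambda_n$ between $\Delta^u_\lambda$ and the truncated pull-backs $\Gamma^s_{\lambda,n}$ of a fixed leaf with non-zero dynamical slope, proves $\lambda_n^m\cong u_0^{-nh}$ (Lemma~\ref{lem:lambda_n}), and then computes $\varphi'_{\lambda_n}$ near the tangency point in two ways --- via the curve $\set{\varphi^{(h)}_\lambda=0}$, giving $\cong\lambda_n^{q}$, and via the non-zero dynamical slope together with the $C^2$ and $C^h$ graph-transform estimates of Proposition~\ref{prop:graph_transform}, giving $\cong (u_0\inv s_0)^n$ (this is where $h\geq2$ enters). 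Matching the two asymptotics forces a resonance $u_\lambda^{m-hq}=s_\lambda^{m}$ persistently, contradicting the non-resonance hypothesis. Your sketch never produces a resonance and never uses the non-resonance assumption in the final contradiction, so the $h\geq2$ half does not close.

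The $m=1$ half is also missing its engine. Your phrase ``differentiating the crossing equation in $\lambda$ and using that $\mathcal T_3$ is perfect'' does not substitute for the paper's argument, which is a counting argument: when $h=1$ the unique vertical tangency has abscissa $x(\lambda)=\lambda^m+\hot$, and if $m\geq 2$ the equation $x(\lambda)=\alpha(\lambda)u_\lambda^{-n}$ has $m$ solutions lying in $m$ disjoint parameter disks (Lemma~\ref{lem:elementary_analysis}); in each disk the tangency-creation Lemma~\ref{lem:creating_tangencies} produces a tangency parameter between $\Delta^u_\lambda$ and $\Gamma^s_{\lambda,n}$, giving $m\geq 2$ distinct tangency parameters with a single leaf and contradicting the uniqueness of the tangency parameter secured in the reduction (\S\ref{subs:reduction}). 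So the correct targets of the two contradictions are, respectively, the non-resonance hypothesis (for $h=1$) and the uniqueness/minimality of the multiplicity (for $m=1$); your proposal aims both halves at minimality of the order, for which no workable mechanism is provided.
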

 
% \romain{à terminer} Before embarking to the proof, we make one last coordinate change: 
% fix a neighborhood   $\Lambda_4$ of $\lambda_3$ and local coordinates 
% depending holomorphically on $\lambda$, 
% so that  in $\Lambda_4$, 
%  $f_\lambda$ 
% is of the form $(\star_k)$ in $2\B$,      
% with uniform bounds on 
%$\norm{g_1}_{2\B}$ and $\norm{g_2}_{2\B}$. In particular 
% Proposition~\ref{prop:graph_transform} 
%applies uniformly (see the comments in \S\ref{subs:remark_families}). 
%
%
% Without loss of generality we rename $\Lambda_4$ into $\Lambda$, 
% $\mathcal {T}_3$ into $\mathcal T$, 
% put $\lambda_3=0$, and 
% resume the conventions of \S\ref{subs:conventions}.

 Before embarking to the proof we make a last observation: after the restrictions that we made, it is not necessarily 
true that   every component   $W^s_{i, \lambda}$ is tangent to  $\Delta^u_\lambda$ for some parameter (this was guaranteed for  $\Lambda_2$ thanks to the property that vertical tangencies escape $\B$ 
at $\fr\Lambda_2$, but not afterwards). However this holds for every 
 component $W^{s,i}$ in some neighborhood of $\set{x=0}$. In the following 
 we fix $r>0$ with the property  that every 
 branch $W^{s,i}$ contained in $D(0,r)\times \D$ admits a tangency with $\Delta^u_\lambda$ for some $\lambda\in \Lambda^\varstar$. We  furthermore request that  for any vertical 
 graph contained in $D(0,r)\times \D$,  the  graph transform  $f\inv(\Gamma)\cap \B$ is contained in 
 $D(0,r)\times \D$ as well.

 For convenience in the proof of Proposition \ref{prop:main} we put $\lambda^\varstar = 0$ 
and rename $\Lambda^\varstar$ into $\Lambda$.

\subsection{Proof of Proposition \ref{prop:main},   part I: $h=1$} \label{subs:proof_part1}
We argue by contradiction so assume that $h\geq 2$. 
Fix a vertical component of 
$  W^s_\loc(\mathcal H_\lambda)\cap W^s(p_\lambda)$ contained in $D(0,r)\times \D$, and denote it by 
$\Gamma^s_\lambda$.
 Let $\Gamma^s_\lambda\cap \set{y=0} =  (\alpha(\lambda), 0)$, 
 so that $\alpha$ is a non-vanishing holomorphic function.
 Let $\Gamma^s_{\lambda, n}$  be the truncated pull-back
of $\Gamma^s_\lambda$ by $f_\lambda^n$, and note that 
$\Gamma^s_{\lambda, n}\cap \set{y=0} =  (u_\lambda^{-n} \alpha(\lambda), 0)$. Thanks to 
 property (P1), the estimate of Proposition~\ref{prop:graph_transform} holds uniformly in $\Lambda$ (see   
the comments in \S~\ref{subs:remark_families}).

For $\lambda = 0$ there is a tangency of order $h$ between the branch $\Delta^u_0$ of 
$W^u(p_\lambda)$ and $W^s_\loc(p_\lambda)$ at $(0, y_0)$, 
which unfolds with the parameter $\lambda$. 
This   initial     tangency may split into several vertical tangencies as 
$\lambda$ evolves, but by (P4) for every $n$
 there is a unique   parameter $\lambda_n$ at which
  $\Gamma^s_{\lambda, n}$ and $\Delta^u_\lambda$ are tangent, 
and the  tangency point is unique by (P3), and of order $h$ by assumption. 
Denote  by $y(\lambda_n)$ the $y$-coordinate of the tangency point. 
Note that $\lambda_n\to 0$ as $n\to\infty$. 
%Denote $u_0 = u$ and $s_0 = s$ so that  

The multipliers satisfy 
 $u_\lambda = u_0(1 + O(\lambda))$ and $s_\lambda = s_0(1+O(\lambda))$.  We will repeatedly use the following elementary
 observation: if $\mu_n$ converges exponentially fast to zero, then 
 $u_{\mu_n}^n\sim u_0^n$ (see~\eqref{eq:ulambda} for the argument). 
 %Indeed 
%$$u_{\mu_n}^n = u_0^n(1+O(\mu_n))^n = u_0^n \exp (n\ln (1+O(\mu_n)))= u_0^n \exp (nO(\mu_n))\sim u_0^n.$$

\begin{lem}\label{lem:lambda_n}
There is a unique speed exponent $\sigma$, in particular $\sigma = m/h$. The tangency parameter $\lambda_n$ satisfies 
$\abs{\lambda_n} \cong \abs{u_0}^{-n/\sigma}$. More precisely we have 
$\lambda_n^m \cong u_0^{-nh}$. 

\end{lem}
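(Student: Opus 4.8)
The plan is as follows. I would first dispose of the \emph{uniqueness of the speed exponent}. Recall from \S\ref{subs:reduction} that for every $n$ there is a single tangency parameter between $\Delta^u_\lambda$ and $\Gamma^s_{\lambda,n}$, occurring at a single tangency point, of order $h$ and multiplicity exactly $m$. Suppose the decomposition $\dot\varphi_{y_0}\cong\prod_{j}\xi_{j}$ of \S\ref{subs:speed} involved branches $\dot{\mathcal C}_j$ of $\set{\dot\varphi_{y_0}=0}$ with distinct speed exponents $\sigma_j$. Since $\varphi_{y_0}$ restricted to $\dot{\mathcal C}_j$ is $\cong\lambda^{\sigma_j}$, while the abscissa of $\Gamma^s_{\lambda,n}$ is $\cong u_0^{-n}$, the vertical tangencies of $\Delta^u_\lambda$ lying on the $j$-th branch would sweep across $\Gamma^s_{\lambda,n}$ when $\abs\lambda\asymp\abs{u_0}^{-n/\sigma_j}$; an argument-principle count in the spirit of Lemma~\ref{lem:creating_tangencies} would then produce, for each value of $\sigma_j$, a tangency parameter of size $\asymp\abs{u_0}^{-n/\sigma_j}$, hence several tangency parameters of distinct sizes, contradicting uniqueness. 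So the $\sigma_j$ all coincide, and $m=\sum_jh_j\sigma_j$ together with $h=\sum_jh_j$ forces the common value to be $\sigma=m/h$. It then remains to prove $\lambda_n^m\cong u_0^{-nh}$, which gives $\abs{\lambda_n}\asymp\abs{u_0}^{-nh/m}=\abs{u_0}^{-n/\sigma}$.

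For this last estimate I would encode the tangency as a common zero of two holomorphic functions of $(\lambda,t)$ and study the associated resultant. Write $\Gamma^s_{\lambda,n}=\set{x=\gamma_{\lambda,n}(y)}$ (a holomorphic graph, being a piece of $W^s(p_\lambda)$) with $\gamma_{\lambda,n}(0)=u_\lambda^{-n}\alpha(\lambda)$, and with $\varphi_{y_0}(\lambda,t)=\varphi_\lambda(y_0+t)$ as in \S\ref{subs:conventions} set
\[ G_n(\lambda,t):=\varphi_{y_0}(\lambda,t)-\gamma_{\lambda,n}(y_0+t),\qquad \dot G_n:=\partial_t G_n, \]
so that $(\lambda,y_0+t)$ is a tangency of $\Delta^u_\lambda$ with $\Gamma^s_{\lambda,n}$ iff $G_n(\lambda,t)=\dot G_n(\lambda,t)=0$. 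Let $t_1^*(\lambda),\dots,t_h^*(\lambda)$ be the zeros of $\dot G_n(\lambda,\cdot)$ in a fixed disk around $t=0$ (there are $h$ of them for $n$ large, by Rouché, since $\dot G_n(0,\cdot)\to\dot\varphi_{y_0}(0,\cdot)\cong t^h$), and put $R_n(\lambda):=\prod_{j=1}^hG_n(\lambda,t_j^*(\lambda))$: this is holomorphic in $\lambda$ and its zeros near $0$ are exactly the tangency parameters, counted with tangency multiplicities. Because $\gamma_{\lambda,n}\to0$ in $C^\ell$ as $n\to\infty$ (Proposition~\ref{prop:graph_transform} applied to $f\inv$, cf. Remark~\ref{rmk:finv}) we have $G_n\to\varphi_{y_0}$ and $R_n\to R_\infty:=\prod_j\varphi_{y_0}(\lambda,t_j^{**}(\lambda))$, with $t_j^{**}(\lambda)$ the corresponding zeros of $\dot\varphi_{y_0}(\lambda,\cdot)$; by (the proof of) Lemma~\ref{lem:multiplicity}, $\mathrm{ord}_0R_\infty=\mult_0(\mathcal C,\dot{\mathcal C})=m$, so $R_\infty\cong\lambda^m$. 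Hence, by Hurwitz's theorem, $R_n$ has exactly $m$ zeros near $0$ for $n$ large, and by the uniqueness of the tangency parameter they all coincide with $\lambda_n$; therefore $R_n(\lambda)=e_n(\lambda)(\lambda-\lambda_n)^m$ with $e_n$ holomorphic and nowhere vanishing near $0$, and $e_n(0)\to e_\infty(0)\neq0$ (comparing $R_n\to R_\infty=e_\infty(\lambda)\lambda^m$ on a small circle, using $\lambda_n\to0$).

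The crux is then to evaluate $R_n(0)$. Applying Proposition~\ref{prop:graph_transform} to $f\inv$ (with the stable and unstable directions exchanged) gives, uniformly for $t$ in a fixed disk, $\gamma_{0,n}(y_0+t)=u_0^{-n}\alpha(0)+O(\abs{s_0u_0\inv}^n)$ and $\gamma_{0,n}'(y_0+t)=O(\abs{s_0u_0\inv}^n)$. Since $\dot\varphi_{y_0}(0,t)\cong t^h$ and $\varphi_{y_0}(0,t)\cong t^{h+1}$ by \eqref{eq:gamma_gammaprime}, the roots satisfy $\abs{t_j^*}\asymp\abs{s_0u_0\inv}^{n/h}$, hence
\[ G_n(0,t_j^*)=-u_0^{-n}\alpha(0)+O(\abs{s_0u_0\inv}^{n(h+1)/h})+O(\abs{s_0u_0\inv}^n)\cong u_0^{-n}, \]
because both error terms are $o(\abs{u_0}^{-n})$ (an elementary consequence of $\abs{s_0}<1<\abs{u_0}$). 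Thus $R_n(0)=\prod_{j=1}^hG_n(0,t_j^*)\cong u_0^{-nh}$, and comparing with $R_n(0)=e_n(0)(-\lambda_n)^m$ and $e_n(0)\to e_\infty(0)\neq0$ yields $\lambda_n^m\cong u_0^{-nh}$. I expect this evaluation to be the main obstacle: it rests on the fact that the graph transform $\el^{-n}$ renders $\Gamma^s_{\lambda,n}$ \emph{flat} to order $\abs{s_0u_0\inv}^n$ — not merely $C^0$-close to a vertical line — so that the perturbation $\gamma_{0,n}$ of the equation $\set{\varphi_{y_0}=0}$ stays negligible against the leading term $u_0^{-n}$ at every relevant scale of the variable $t$; this is precisely why the sharp estimates of Proposition~\ref{prop:graph_transform}, rather than a soft form of the Inclination Lemma, are needed.
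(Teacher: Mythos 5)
Your proposal is correct in substance but takes a genuinely different route from the paper's proof. The paper argues geometrically: by the $\ell=1$ case of Proposition~\ref{prop:graph_transform}, $\Gamma^s_{\lambda,n}$ lies in the tube $D(u_\lambda^{-n}\alpha(\lambda), C\abs{u_\lambda\inv s_\lambda}^n)\times \D$, and at $\lambda=\lambda_n$ the order-$h$ tangency, the maximum principle and Lemma~\ref{lem:vertical_tangencies} force \emph{all} $h$ vertical tangencies of $\Delta^u_{\lambda_n}$ into that tube; since $\abs{x_{i,j}(\lambda)}\cong\abs{\lambda}^{\sigma_j}$, this gives $\abs{\lambda_n}^{\sigma_j}\cong\abs{u_0}^{-n}$ simultaneously for every block $j$, which yields the uniqueness of $\sigma$ and the size of $\lambda_n$ in one stroke, and the relation $x_{i,j}(\lambda)^h\cong\lambda^m$ then gives $\lambda_n^m\cong u_0^{-nh}$. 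You never localize the vertical tangencies; instead you pin down the zero divisor of the resultant-type function $R_n$ as $m\cdot[\lambda_n]$ (Hurwitz against $R_\infty\cong\lambda^m$, plus uniqueness of the tangency parameter from \S\ref{subs:reduction}) and evaluate $R_n(0)$ using the $C^1$ flatness of $\gamma_{0,n}$. This is a valid alternative: the identification $\mathrm{ord}_0 R_\infty=\mult_0(\mathcal C,\dot{\mathcal C})=m$ is the standard resultant computation of intersection numbers (or can be read off the Puiseux decomposition of \S\ref{subs:speed} with $m=\sum m_j$), and your error terms are indeed $o(\abs{u_0}^{-n})$ because $\abs{s_0}^{h+1}<\abs{u_0}$; what it buys is the refined relation $\lambda_n^m\cong u_0^{-nh}$ without any discussion of where the vertical tangencies sit, at the price of invoking the full force of the uniqueness/minimal-multiplicity normalization.

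Two caveats. First, only the upper bound $\abs{t_j^*}\lesssim\abs{s_0u_0\inv}^{n/h}$ is justified (the roots could be smaller if $\gamma'_{0,n}$ undershoots its bound), but that is all your estimate uses. Second, your treatment of the uniqueness of the speed exponent is the weak point: the assertion that each block with exponent $\sigma_j$ produces a tangency parameter of size $\asymp\abs{u_0}^{-n/\sigma_j}$ is only sketched, and making it precise requires the quantitative crossing argument of Lemma~\ref{lem:elementary_analysis} together with Lemma~\ref{lem:creating_tangencies} (as in part II of the proof of Proposition~\ref{prop:main}), adapted to Puiseux branches when $h_j>1$: the abscissae are not holomorphic in $\lambda$, so one must work on the normalization and control $\mu\mapsto \varphi_{y_0}(\mu^{h_j},\theta_j(\mu))-u_{\mu^{h_j}}^{-n}\alpha(\mu^{h_j})$, check that the other blocks stay outside the narrow bidisk, and verify the union-of-graphs hypothesis on the boundary of the parameter disk. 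This is doable but noticeably heavier than the paper's tube argument, which delivers the uniqueness of $\sigma$ essentially for free.
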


%Note that we are not claiming that  the curve $\dot{\mathcal{C}}$ 
%is irreducible (notation as in \S\ref{subs:speed}). 
 
\begin{proof}
By the case $\ell=1$ of 
Proposition~\ref{prop:graph_transform},  there is a constant $C$ such that for every $\lambda$, 
$\Gamma^s_{\lambda, n} $ is contained in the ``tube'' 
$D(u_\lambda^{-n} \alpha(\lambda), C\abs{u_\lambda^{-1} s_\lambda }^n)\times \D$.  
For $\lambda = \lambda_n$, 
$\Delta^u_{\lambda_n}$ is a  horizontal submanifold 
of degree $h+1$ in $\B$, with a tangency of order $h$ with the vertical graph 
$\Gamma^s_{\lambda_n, n}$. By the maximum principle, every component 
of $\Delta^u_{\lambda_n}\cap 
D(u_\lambda^{-n} \alpha(\lambda), C\abs{u_\lambda^{-1} s_\lambda }^n)\times \D$ 
is a holomorphic disk. Lemma~\ref{lem:vertical_tangencies} then implies that 
 $\Delta^u_{\lambda_n}$ admits $h$ vertical tangencies in 
 $D(u_\lambda^{-n} \alpha(\lambda), C\abs{u_\lambda^{-1} s_\lambda }^n)\times \D$.

By Proposition-Definition~\ref{propdef:speed}, the vertical tangencies of $\Delta^u_{\lambda}$ are decomposed in blocks of $h_j$ tangencies 
moving like $\lambda^{\sigma_j}$, $\sigma_j\geq 1$. Denote by $x_{i, j}(\lambda)$ the abscissae of 
vertical tangencies, where  $1\leq j\leq q$ and  $ 1\leq i\leq h_j$. 
These do not necessarily define holomorphic functions, but we know that 
$\abs{ x_{i, j}(\lambda)}\cong \abs{\lambda}^{\sigma_j}$ as $\lambda\to 0$. 
By the first part of the proof, for all $i,j$, $x_{j, i}(\lambda_n)$
belongs to $D(u_\lambda^{-n} \alpha(\lambda), C\abs{u_\lambda^{-1} s_\lambda }^n)$. 
Taking moduli we see 
that $\abs{x_{j, i}(\lambda_n)}\sim \abs{u_{\lambda_n}}^{-n}  \abs{\alpha(\lambda_n)}$.
From the two previous relations we get that 
$\abs{\lambda_n}^{\sigma_j}\cong \abs{u_{\lambda_n}}^{-n}  \abs{\alpha(\lambda_n)}$. Since $\lambda_n\to 0$, this shows that $\lambda_n$ decays exponentially, so 
$\abs{u_{\lambda_n}}^{-n}\sim \abs{u_0}^{-n}$, hence 
$\abs{\lambda_n}^{\sigma_j}\cong  \abs{u_0}^{-n}$, from which it follows that 
$\sigma_j$ is independent of $j$ and $\abs{\lambda_n} \cong  \abs{u_0}^{-n/\sigma}$.
 From the discussion in \S\ref{subs:speed} (in particular Equation~\eqref{eq:gamma_puiseux} and the discussion following it) we have that $x_{j, i}(\lambda)^h\cong \lambda^m$, so the same reasoning 
 shows that $\lambda_n^m\cong u_0^{-nh}$, as desired. 
\end{proof}

\begin{rmk}\label{rmk:similarity} \commentaire{New remark related to Major remark \#3 and minor comment \#11}
Record for future reference that what we only 
need from Proposition~\ref{prop:graph_transform} in this lemma is the existence of a bound of the form 
$o(u_\lambda^{-n})$ for the slope of $\Gamma^s_{\lambda,n}$. We will see in the proof of Theorem~\ref{thm:persistent} below that such a bound always holds. 
We also used the uniqueness of the tangency parameter. 
Thus, in a general setting, 
for tangencies with $h=m=1$, the proof applies without modification to show  that 
$\lambda_n\cong u_0^{-n}$. 
\end{rmk}

Recall that the equation of $\Delta^u_\lambda$ near $(0, y_0)$ is of the form $x=\varphi_\lambda(y)$, with $\varphi_\lambda(y)  = c(y-y_0)^{h+1}+ O((y-y_0)^{h+2})+ \lambda \varphi_1(\lambda, y)$. Let 
$\mathcal C^{(h)}$ be the curve in $(\lambda, y)$ space defined near $(0, y_0)$ 
by $ \varphi_\lambda^{(h)}(y)   = 0$ (derivative with respect to the $y$ variable). Since 
\begin{equation}
 \varphi_\lambda^{(h)}(y)   \cong   (y-y_0) + O((y-y_0)^2 + O(\lambda),
 \end{equation}
  we see that $\mathcal C^{(h)}$ is smooth and  locally a graph over the $\lambda$-coordinate, of the form 
$y  = \psi(\lambda)$, with $\psi(0) = y_0$.  \commentaire{We do not define anymore the integer $q$ here, cf. Major Remark \#1}
%Since $(\la, y)\mapsto\varphi_\la(y)$ is holomorphic and 
%$\varphi_0'(0) = 0$, 
%there exists an integer $q\geq 1$ such that 
%$\varphi_\lambda' (\psi(\lambda)) \cong \lambda^q$ as $\lambda\to 0$. 

Let $x= \gamma_{\lambda_n,n}(y)$ be the equation of $\Gamma^s_{\lambda_n, n}$, and
note that by Proposition~\ref{prop:graph_transform} (for $\ell=h$), we have 
$\abs{\gamma_{\lambda_n,n}^{(h)} (y(\lambda_n))}\lesssim  
\abs{u_{\lambda_n} \inv s_{\lambda_n}^{h}} ^n$. 
Since  $\Delta^u_{\lambda_n}$ and $\Gamma^s_{\lambda_n, n}$ are tangent to order $h$ 
at $y(\lambda_n)$ we obtain  
\begin{equation}
\abs{\varphi_{\lambda_n}^{(h)} (y(\lambda_n)) }\leq 
C \abs{u_{\lambda_n} \inv s_{\lambda_n}^{h}} ^n
\end{equation}
and since $(\lambda_n)$ decays exponentially we get 
  $\big( u_{\lambda_n}\inv  s_{\lambda_n}^{h}\big) ^n \sim \big(u_0 \inv  s_0^{h} \big)^n$, thus 
\begin{equation}\label{eq:ylambda_n}
\abs{\varphi_{\lambda_n}^{(h)} (y(\lambda_n)) }\leq 
C \abs{u_0 \inv s_0^{h}} ^n. 
\end{equation}
Since there exists $\delta>0$ such that  in the neighborhood of $(0,y_0)$, 
$\big\vert{\varphi_{\lambda}^{(h+1)}}\big\vert\geq \delta$, from~\eqref{eq:ylambda_n} we deduce 
 that there exists $\widetilde y_n$ with 
 $\abs{y(\lambda_n) - \widetilde y_n} \lesssim \abs{u_0 \inv s_0^{h}} ^n$   such that 
 $\varphi_{\lambda_n}^{(h)} (\widetilde y_n) =0$. Hence 
  $(\lambda_n , \widetilde y_n)\in \mathcal C^{(h)}$, 
  so $\widetilde y_n = \psi(\lambda_n)$,
 and we record the estimate
 \begin{equation}\label{eq:varphi_prime_lambda1}
 %\varphi_{\lambda_n}' (\widetilde y_n)) = \varphi_{\lambda_n}(\psi(\lambda_n)) 
 \abs{y(\lambda_n) - \psi(\lambda_n)} \lesssim \abs{u_0 \inv s_0^{h}} ^n
 \end{equation}
 
 Using again the fact that $\Delta^u_{\lambda_n}$ and 
 $\Gamma^s_{\lambda_n, n}$ are tangent  
at $y(\lambda_n)$, we get $\gamma_{\lambda_n,n}' (y(\lambda_n)) = \varphi_{\lambda_n}'(y(\lambda_n))$.
By Proposition~\ref{prop:graph_transform} for $\ell=2$ we have 
\begin{equation}\label{eq:gamma_n}
\abs{\gamma_{\lambda_n,n}' (y(\lambda_n))- \gamma_{\lambda_n,n}' (0)}\lesssim \norm{\varphi''_{\lambda_n}} \lesssim \abs{u_0\inv s_0^2}^n.
\end{equation}
Now recall that $\Gamma_\lambda^s$ was chosen so that its dynamical slope is non-zero for every $\lambda\in \Lambda$. This 
 is an invariant property so it holds for $\Gamma_{\lambda_n,n}^s$, which by Proposition~\ref{prop:slope}
 implies that 
 $\gamma_{\lambda_n,n}' (0)\cong (u_0\inv  s_0)^n$. 
 By~\eqref{eq:gamma_n} we get $\gamma_{\lambda_n,n}' (y(\lambda_n))\cong (u_0\inv s_0)^n$, hence 
 $\varphi_{\lambda_n}'(y(\lambda_n)) \cong (u_0\inv s_0)^n$. 
 Since  $\abs{\varphi''_{\lambda_n}}$ is uniformly bounded, 
 %and   $\abs{y(\lambda_n) - \widetilde y_n} \lesssim \abs{u_0 \inv s_0^{h}} ^n \ll \abs{u_0 \inv s_0} ^n$,  we conclude  that 
 from~\eqref{eq:varphi_prime_lambda1} we get 
 \begin{equation}\label{eq:varphi_prime_lambda2}
 \varphi_{\lambda_n}' (\psi(\lambda_n))  =  \varphi_{\lambda_n}' (y(\lambda_n)) + O \lrpar{(u_0 \inv s_0^{h})^n}   \cong (u_0\inv  s_0)^n
 \end{equation}
 (note that   $h\geq 2$ is used exactly here).
%Putting together~\eqref{eq:varphi_prime_lambda1} and 
%\eqref{eq:varphi_prime_lambda2}  it   follows that 
%$\varphi_{\lambda_n}(\psi(\lambda_n))\cong (u_0\inv  s_0)^n$. 
 In particular $\lambda \mapsto \varphi_\lambda'(\psi(\lambda))$ is not identically 0, and since it is holomorphic and $\varphi_\lambda'(y_0) = 0$\commentaire{cf. Minor comment \#6}, there exists an integer $q\geq 1$ such 
 that $\varphi_\lambda'(\psi(\lambda))\cong \lambda^q$ as $\lambda\to 0$\commentaire{Major Remark \#1 (continued): the integer $q$ is introduced here}. Thus~\eqref{eq:varphi_prime_lambda2} rewrites as  
$\lambda_n^q\cong (u_0\inv  s_0)^n$ and using 
Lemma~\ref{lem:lambda_n} we finally get that as $n\to\infty$,
\begin{equation}\label{eq:su}
u_0^{-nhq}\cong (u_0\inv  s_0)^{nm}. 
\end{equation}

\begin{lem}
If $a$ and $b$ are non-zero complex numbers such that $a^n\cong b^n$ as $n\to\infty$,  then $a=b$. 
\end{lem}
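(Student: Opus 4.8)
The plan is to recall that the notation $a^n \cong b^n$ means $a^n \sim c\, b^n$ for some nonzero complex constant $c$, uniformly as $n \to \infty$. First I would take moduli: from $\abs{a}^n \sim \abs{c}\,\abs{b}^n$ we immediately get $(\abs{a}/\abs{b})^n \to \abs{c} \in (0,\infty)$, which forces $\abs{a} = \abs{b}$ (otherwise the left side would tend to $0$ or $\infty$). So $a$ and $b$ have the same modulus, and we may write $a = \abs{a} e^{i\alpha}$, $b = \abs{a} e^{i\beta}$; after dividing by $\abs{a}^n$ the relation becomes $e^{in\alpha} \sim c\, e^{in\beta}$, i.e. $e^{in(\alpha - \beta)} \to c$, and in particular $\abs{c} = 1$.

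Next I would argue about the argument (phase). Set $\theta = \alpha - \beta$; then $e^{in\theta}$ converges to $c$ as $n \to \infty$. But $e^{i(n+1)\theta} = e^{i\theta} \cdot e^{in\theta}$, so passing to the limit gives $c = e^{i\theta} c$, and since $c \neq 0$ this yields $e^{i\theta} = 1$, i.e. $\theta \in 2\pi\Z$. Hence $e^{i\alpha} = e^{i\beta}$, so $a = b$ (and incidentally $c = 1$).

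I do not expect any genuine obstacle here: the statement is elementary and the only subtlety is keeping straight that $\cong$ refers to asymptotic proportionality with a possibly complex (but necessarily unimodular, as it turns out) constant, rather than literal equality. The one point worth being careful about is to extract \emph{both} the modulus information and the phase information from the single relation; the modulus part uses that $\abs{b} \neq 0$ and the phase part uses the shift trick $n \mapsto n+1$ together with $c \neq 0$. Everything else is a one-line computation.
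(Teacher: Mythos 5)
Your proof is correct and follows essentially the same route as the paper, which simply reduces to the observation that $z^n\to c\neq 0$ forces $z=1$ (with $z=a/b$); your modulus-plus-phase argument is a detailed verification of exactly that fact. As a small streamlining, the shift trick alone already suffices: from $(a/b)^n\to c$ and $(a/b)^{n+1}\to c$ one gets $c=(a/b)\,c$, hence $a/b=1$, without first separating $\abs{a}=\abs{b}$.
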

\begin{proof}
Indeed if $z\in \C$ is such that $z^n\to c\neq 0$ then $z=1$ and $c=1$.  
\end{proof}

Thus the relation~\eqref{eq:su} implies that $u_0^{-hq} = (u_0\inv  s_0)^{m}$, that is, 
$u_0^{m-hq} = s_0^m$. 
 Now  we repeat  this entire reasoning for every parameter 
$\lambda\in \mathcal T$ sufficiently close to 0, and it follows that 
 for every such $\lambda$ there is a relation 
of the form $u_\lambda^{-hq } = (u_\lambda\inv  s_\lambda)^{m}$, where $q$ depends a priori on $\lambda$.
Since  $\abs{u_\lambda}$ and $\abs{s_\lambda}$ are  uniformly bounded 
 away from 0 and 1, and $h$ and $m$ are fixed, 
 we infer  that $q$ 
is uniformly bounded. Therefore we can 
 select an infinite subset  $\mathcal T'\subset \mathcal T$  where 
 the relation $u_\lambda^{m-hq} = s_\lambda^m$ holds for a fixed $q$, so by analyticity
$u_\lambda^{m-hq} = s_\lambda^m$ for every $\lambda\in \Lambda$. This contradicts the non-existence of persistent resonances, thereby completing    the proof. \qed

\subsection{Proof of Proposition \ref{prop:main},   part II: $m=1$} This is a rather direct consequence 
of the uniqueness of the tangency parameter and the argument principle, 
so the result is  simpler in the complex case  than in the real case.

At this stage we know that $h=1$, so $\Delta^u_\lambda$  admits 
 a unique vertical tangency. Denote by $x(\lambda)$ its first coordinate.
With notation as in \S\ref{subs:speed}  since $h=1$, we have    
 $x(\lambda) = \varphi_{y_0}(\lambda, \theta(\lambda)) = c\lambda^m+\hot$ (see~\eqref{eq:sigma}), 
 and from  the relation 
 $\sum h_j = h$, we see that  $\dot{\mathcal C}$ is irreducible.  
 For notational simplicity we rescale the parameter space so that 
 $c=1$. 
 Assume by way of contradiction that $m\geq 2$.
 As before, fix a  
  vertical component   $\Gamma^s_{\lambda}$ of 
$  W^s_\loc(\mathcal H_\lambda)\cap W^s(p_\lambda)$ contained in $D(0,r)\times \D$,
  with $\Gamma^s_\lambda\cap \set{y = 0}= (\alpha(\lambda), 0)$, with $\alpha(\lambda)\neq 0$, 
  and let $\Gamma^s_{\lambda, n}$ its  $n^{\rm th}$ truncated pull back.
  We may assume that $\alpha(\lambda)$ does not move too much in the sense that 
  $\abs{\alpha(\lambda) - \alpha(0)}\leq \abs{\alpha(0)}/10$. 
   We will 
  show that for large $n$ there are $m$ distinct parameters such that 
 $\Delta^u_\lambda$ is tangent to $\Gamma^s_{\lambda, n}$, 
 which contradicts  property (P4).
 % the minimality of $m$ (see the comments before Proposition~\ref{prop:main}). 
 For this, we use the following facts: (i) for large $n$,
 $\Gamma^s_{\lambda, n}$ is very close to the vertical line through 
 $(\alpha(\lambda)u_\lambda^{-n}, 0)$, and (ii) the equation 
 $x(\lambda) = \alpha(\lambda)u_\lambda^{-n}$ has $m$ solutions. 
 
The exact formulation of step (ii) is the following:
 
\begin{lem}\label{lem:elementary_analysis}
If  $ \delta < \abs{u_0}\inv$   is fixed, then  for 
  sufficiently large $n$,  there are $m$ disjoint topological disks $\Lambda_{n, i}\subset \Lambda$, 
  $1\leq i\leq m$, with $\Lambda_{n, i} \subset D(0, C \abs{u_0}^{-n/m})$, 
  in which 
$\lambda \mapsto x(\lambda ) - \alpha(\lambda) u_\lambda^{-n} $ realizes a 
biholomorphism
$\Lambda_{n, i}\to D(0, \delta^n)$. 
\end{lem}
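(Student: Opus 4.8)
The plan is to treat $\lambda\mapsto x(\lambda)-\alpha(\lambda)u_\lambda^{-n}$ as a small perturbation of the model map $\lambda\mapsto \lambda^m-\alpha(0)u_0^{-n}$ and apply Rouché / the argument principle on suitably chosen small circles, then upgrade the resulting zero count to a biholomorphism statement via the open mapping theorem. First I would fix $\delta<\abs{u_0}^{-1}$ and observe that, since $x(\lambda)=\lambda^m+\hot$ (after rescaling so $c=1$), the solutions of the idealized equation $\lambda^m=\alpha(0)u_0^{-n}$ are the $m$-th roots $\lambda=\alpha(0)^{1/m}u_0^{-n/m}\zeta$, $\zeta^m=1$, which are $m$ distinct points lying on the circle of radius $r_n:=\abs{\alpha(0)}^{1/m}\abs{u_0}^{-n/m}$. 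I would then center topological disks $\Lambda_{n,i}$ of radius $\eta r_n$ (for a small fixed $\eta$, small enough that the $m$ disks are disjoint and contained in $D(0,Cr_n)$) at these $m$ roots.

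The key estimate is that on the boundary circle $\partial\Lambda_{n,i}$ the true map is exponentially close to the model. Writing $x(\lambda)=\lambda^m(1+O(\lambda))$ and $\alpha(\lambda)u_\lambda^{-n}=\alpha(0)u_0^{-n}(1+o(1))$ where the error in the second factor comes from $\abs{\alpha(\lambda)-\alpha(0)}\leq\abs{\alpha(0)}/10$ together with $u_\lambda=u_0(1+O(\lambda))$ and $\abs{\lambda}\lesssim r_n\to 0$ (so that $u_\lambda^{-n}\sim u_0^{-n}$, the elementary observation already used repeatedly above), one finds that on $\partial\Lambda_{n,i}$ the quantity $\abs{\lambda^m-\alpha(0)u_0^{-n}}$ is bounded below by $c\abs{u_0}^{-n}$ for some $c=c(\eta)>0$, while the difference between the true map and the model is $O(r_n\cdot\abs{u_0}^{-n})=o(\abs{u_0}^{-n})$. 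Hence Rouché applies on each $\partial\Lambda_{n,i}$ and the true map has exactly one zero in each $\Lambda_{n,i}$, namely a simple one (the model has a simple zero there since the $m$-th roots are distinct). Moreover the same Rouché comparison applied on the slightly shrunk circle where $\lambda\mapsto x(\lambda)-\alpha(\lambda)u_\lambda^{-n}$ takes values of modulus exactly $\delta^n$ shows — once we check $\delta^n\ll c\abs{u_0}^{-n}$, which holds for large $n$ precisely because $\delta<\abs{u_0}^{-1}$ — that after possibly shrinking $\Lambda_{n,i}$ the map has modulus $<\delta^n$ on $\Lambda_{n,i}$ and $>\delta^n$ on its boundary, so it is a proper degree-one map onto $D(0,\delta^n)$, i.e. a biholomorphism. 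The location $\Lambda_{n,i}\subset D(0,C\abs{u_0}^{-n/m})$ is immediate from the choice of centers and radii.

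The main obstacle, such as it is, is bookkeeping the two competing small scales: the radius $r_n\asymp\abs{u_0}^{-n/m}$ of the disks versus the target radius $\delta^n$, and making sure the lower bound $c\abs{u_0}^{-n}$ on the model map along the relevant circles genuinely dominates both the perturbation error $o(\abs{u_0}^{-n})$ and the target value $\delta^n$. This forces the constraint $\delta<\abs{u_0}^{-1}$ (so $\delta^n=o(\abs{u_0}^{-n})$) and is the only place the hypothesis on $\delta$ is used; the rest is routine Rouché and open-mapping arguments. One mild point to keep clean is the holomorphy of $\lambda\mapsto u_\lambda^{-n}$ and $\alpha(\lambda)$ — both are honest holomorphic functions of $\lambda$ on $\Lambda$ (the saddle multiplier and the intersection point of $\Gamma^s_\lambda$ with $\{y=0\}$ move holomorphically), so $\lambda\mapsto x(\lambda)-\alpha(\lambda)u_\lambda^{-n}$ is holomorphic and the argument principle applies without caveat.
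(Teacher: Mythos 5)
Your argument is correct, but it follows a genuinely different route from the paper. You compare $F(\lambda)=x(\lambda)-\alpha(\lambda)u_\lambda^{-n}$ with the explicit model $\lambda^m-\alpha(0)u_0^{-n}$ via Rouch\'e on circles of radius $\eta r_n$ about the $m$ model roots, and then upgrade ``one simple zero per disk, with $\abs{F}>\delta^n$ on the boundary'' to a biholomorphism onto $D(0,\delta^n)$ by a proper degree-one map argument. The paper instead locates the zeros directly, by writing $x(\lambda)=g(\lambda)^m$ with $g$ univalent and solving $g(\lambda)=(\alpha(\lambda)u_\lambda^{-n})^{1/m}$ for each choice of root, and then invokes a quantitative inverse-function lemma: bounding $\abs{F'}$ from above and $\abs{F'(\lambda_{n,i})}$ from below on a disk of radius $\beta^n$ around each zero (with $\delta\abs{u_0}^{(m-1)/m}<\beta<\abs{u_0}^{-1/m}$) shows that $F$ maps an approximately round domain of size $\asymp\beta^n$ biholomorphically onto a disk of radius $\gg\delta^n$, from which the statement follows by restriction. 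Your route is more elementary (only Rouch\'e and the argument principle, no derivative estimates or univalence lemma) and delivers exactly the statement needed downstream, including the zero $\lambda_{n,i}$ inside each disk; the paper's route yields slightly more geometric information (explicitly located zeros and roughly round domains of controlled size $\beta^n$), which is not needed for the application. Two small points to tighten in a write-up: the perturbation of the second term is $\alpha(\lambda)u_\lambda^{-n}-\alpha(0)u_0^{-n}=O(n\,r_n\abs{u_0}^{-n})$ (the factor $n$ comes from $u_\lambda^{-n}=u_0^{-n}\exp(O(n\lambda))$), not $O(r_n\abs{u_0}^{-n})$, but this is still $o(\abs{u_0}^{-n})$ so Rouch\'e goes through; and the final step is cleaner if phrased as ``take the connected component of $F^{-1}(D(0,\delta^n))$ inside $\Lambda_{n,i}$ containing the zero; it is compactly contained since $\abs{F}>\delta^n$ on $\fr\Lambda_{n,i}$, so $F$ restricted to it is proper of degree one, hence a biholomorphism'' rather than via ``the circle where $\abs{F}=\delta^n$'', which is not literally a circle.
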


\begin{proof}[Proof of Lemma \ref{lem:elementary_analysis}] We  rely on the following   fact 
 from elementary  complex analysis, which we leave as an exercise to the reader: if $f$ is a holomorphic function on $\D$ such that $f(0)  = 0$, $f'(0) = 1$ and $\abs{f'}\leq M$, then there exists $\rho= \rho(M)$ 
and a domain $\Omega$ with $D(0, \rho/2)\subset \Omega\subset D(0, 3\rho/2)$ such that $f\rest{\omega}:\Omega\to D(0, \rho)$ is a biholomorphism. By rescaling it holds with $f$ holomorphic in $D(0, R)$, 
$\abs{f'(0)} = d$, $\abs{f'}\leq dM$ and the image radius is $\rho Rd$.

Recall that by assumption $x(\lambda) = \lambda^m+\hot$
 Let us first show that there are $m$ solutions to the equation $x(\lambda)   = \alpha(\lambda) u_\lambda^{-n}$ close to the origin.
 Write  $x(\lambda) = g(\lambda)^m$, where $g$ is holomorphic in some disk 
$D(0, R)$ and $g'(0) = 1$, so that  by the above result 
$g$ is a univalent  map  $\Omega \to D(0,  \rho R)$, for some domain $\Omega$.

Recall that if 
 $\lambda$ is exponentially small (i.e. $\abs{\lambda}\leq (1-\eta)^n$ for some $\eta>0$), then
$u_\lambda^{-n}\sim  u_0^{-n}$: indeed $u_\lambda = u(1+O(\lambda))$, so 
\begin{align}\label{eq:ulambda}
u_\lambda^{-n} &= u_0^{-n}(1+ O((1-\eta)^n))^n = u_0^{-n} \exp(-n\ln(1+O((1-\eta)^n) ))   \\ & \notag
= u_0^{-n} \exp(n O((1-\eta)^n) )  
  \sim u_0^{-n}. 
\end{align}
 It follows that for every choice $\zeta_i$, $1\leq i \leq m$ 
  of  $m^{\rm th}$ root of $\alpha(\lambda) u_\lambda^{-n}$, we have a solution $\lambda_{n,i}$ 
of $g(\lambda)   = \zeta_i$ in $\Omega$ with $\lambda_{n,i}\asymp \abs{u_0}^{-n/m}$. 
%(For notational convenience we drop the $i$ and write $\lambda_n = \lambda_{n,i}$.)
At $\lambda_{n,i}$ we have $x'(\lambda_{n,i}) 
\sim m \abs{\lambda_{n,i}}^{m-1} \asymp
\abs{u_0}^{-n({m-1})/{m}}$, while reasoning as in \eqref{eq:ulambda} we get that 
$\frac{d}{d\lambda}\big\vert_{\lambda = \lambda_{n, i}}( \alpha(\lambda) u_\lambda^{-n}) 
= O(\abs{u_0}^{-n})$. Let $\beta$ be such that 
$ \delta\abs{u_0}^{(m-1)/m}< \beta < u^{-1/m}$. Then in $D(\lambda_{n,i}, \beta^n)$ we get that 
$\abs{\frac{d}{d\lambda}(x(\lambda) - \alpha(\lambda) u_\lambda^{-n}} \leq C \abs{u_0}^{-n({m-1})/{m}}$, 
so by the preliminary fact, 
$\lambda\mapsto x(\lambda) - \alpha(\lambda) u_\lambda^{-n}$ realizes  a biholomorphism 
 from an approximately round domain $\Lambda_n$
 of size $\asymp \beta^n$  about $\lambda_{n,i}$
 to a disk centered at the origin and 
 of radius $\asymp  \beta^n\abs{u_0}^{- n(m-1)/m}\gg \delta^n$, and we are done. 
\end{proof}

Fix $\delta$ such that 
$\max(\abs{s_0u_0\inv}, \abs{u_0}^{-1-1/m})<\delta < \abs{u_0}\inv$, and let $(\Lambda_{n,i})_{1\leq i\leq m}$ 
be as in the previous lemma.  
Let us conclude the proof of Proposition \ref{prop:main} 
by showing that for every $i$ 
there is a parameter  in $\Lambda_{n, i}$ for which 
$\Delta^u_\lambda$ is tangent to $\Gamma^s_{\lambda, n}$. 
Using the fact that 
$\Lambda_{n, i} \subset D(0, C\abs{u_0}^{-n/m})$ and arguing as in~\eqref{eq:ulambda}, for  $\lambda\in \Lambda_{n, i}$ we get 
\begin{equation}\label{eq:alpha}
\abs{\alpha(\lambda)u_\lambda^{-n}  - \alpha(0)u_0^{-n} }= \abs{u_0}^{-n} \abs{\alpha(\lambda)  \exp(n\ln(1+O(\lambda))) - \alpha(0)} \lesssim  \abs{u_0}^{-n} \cdot n\abs{u_0}^{-n/m}
\end{equation}
Fix $1\leq i\leq m$ and 
let as before $\lambda_{n,i}$ be the unique solution of 
$x(\lambda_{n,i} ) - \alpha(\lambda_{n,i}) u_{\lambda_{n,i}}^{-n}=0$  in $\Lambda_{n,i}$. Let us check that the assumptions of Lemma~\ref{lem:creating_tangencies} are satisfied in  the bidisk 
$D(x(\lambda_{n,i}), \delta^n/2)\times \D$, for the parameter space $\Lambda_{n,i}$. First, for $\lambda = \lambda_{n,i}$, $\Delta^u_{\lambda_{n,i}}$ admits a vertical tangency at $x=  x(\lambda_{n,i})$ so it is not a union of graphs. By~\eqref{eq:alpha}, for $\lambda\in  \overline{\Lambda_{n, i}}$ we have 
\begin{equation}
\abs{\alpha(\lambda) u_\lambda^{-n} - \alpha(\lambda_{n,i}) u_{\lambda_{n,i}}^{-n}}\lesssim n \abs{u_0}^{-n\lrpar{1+\unsur{m}} } = o(\delta^n),
\end{equation}
hence  for $\lambda\in \fr \Lambda_{n, i}$ and large enough $n$
\begin{equation}
\abs{x(\lambda) - x(\lambda_{n,i})}  = \delta^n - \abs{\alpha(\lambda) u_\lambda^{-n} - \alpha(\lambda_{n,i}) u_{\lambda_{n,i}}^{-n}} \geq \frac{\delta^n}{2}.
\end{equation}
Thus  for $\lambda\in \fr \Lambda_{n, i}$ the unique vertical tangency of $\Delta^u_{\lambda}$ lies outside 
$D(x(\lambda_{n,i}), \delta^n/2)\times \D$, so in this bidisk 
the horizontal manifold $\Delta^u_{\lambda}$ 
is a union of (two) holomorphic graphs. Now by Proposition \ref{prop:graph_transform}, 
  for $\lambda \in \Lambda_{n,i}$, 
$\Gamma^s_{\lambda, n}$ is a vertical graph in $\B$, through  $(\alpha(\lambda) u_\lambda^{-n}, 0)$, with slope bounded by $C\abs{s_\lambda u_\lambda\inv}^n \lesssim \abs{s_0u_0\inv}^n$. 
Therefore, by \eqref{eq:alpha}, its first coordinate its 
contained in  
\begin{equation}
D\lrpar{\alpha(\lambda)u_\lambda^{-n}, C \abs{s_0u_0\inv}^n} \subset 
D\lrpar{\alpha(\lambda_{n,i}) u_{\lambda_{n,i}}^{-n}, C \abs{s_0u_0\inv}^n+ C n \abs{u_0}^{-n\lrpar{1+\unsur{m}}}}
\end{equation}
which is contained in $D(\alpha(\lambda_{n,i}) u_{\lambda_{n,i}}^{-n} , \delta^n/4)= D(x(\lambda_{n,i}), \delta^n/4)$ for large enough $n$. Therefore 
Lemma~\ref{lem:creating_tangencies}  applies and produces a tangency between 
$\Delta^u_{\lambda}$ and 
$\Gamma^s_{\lambda, n}$ in each $\Lambda_{n,i}$, which is the desired contradiction. \qed

 \subsection{The case of complex Hénon maps: proof of Corollary~\ref{cor:quadratic_henon}}
 The proof of Corollary~\ref{cor:quadratic_henon} relies on the weak stability theory 
 of~\cite{tangencies}: see the Appendix for a brief review. 
 
% In a nutshell, it says that in a  family $\Lambda$ of dissipative  
% polynomial automorphisms of $\C^2$, if for $\lo\in \Lambda$, a  dense set of 
%saddle points at $\lambda = \lo$ remain of saddle type in some domain
%$\Lambda'\subset \Lambda$, then the small Julia set $J^\varstar$ moves by a branched holomorphic motion over $\Lambda'$. In particular every homoclinic or heteroclinic tangency is persistent in $\Lambda'$ (we refer to \cite{tangencies} for the relevant concepts, this precise statement is the contents of Proposition 4.14 in op.cit.). 

\begin{proof}[Proof of Corollary~\ref{cor:quadratic_henon}] 
At the parameter $\lo$ there is a non-persistent 
tangency between $W^s(p_0)$ and $W^u(p_0)$, where $p_0$ is 
some saddle periodic point (whose continuation is   denoted by $p_\la$). 
If there is no persistent resonance between the multipliers of $p_\la$, we are done by Theorem~\ref{thm:quadratic}. Otherwise, let 
$\mathcal S_0$ be the set of saddle points at $\lo$, which is dense in 
$J^\varstar_0$ by \cite{bls}. Every $q_0\in \mathcal S_0$ admits a local continuation $q_\lambda$.

The first claim is that for every $q_0$ in $\mathcal S_0$, there is a parameter $\la_1$ arbitrarily close to $\lo$ for  which there is a homoclinic tangency associated to $q_{\lambda_1}$. 
Indeed at $\lambda_0$, by \cite[Thm 9.6]{bls}, 
$p_0$ and $q_0$ belong to the same homoclinic class, and this property persists in a small neighborhood $\Lambda'$ of $\lo$. As in \S\ref{subs:conventions}, fix a disk 
$\Delta^u_0 \subset W^u(p_0)$ tangent to the local stable manifold of $p_0$ at $t(\lo)$. By the inclination lemma, we can find two  sequences of disks $\Gamma_n^s(q_0)\subset W^s(q_0)$ and 
$\Gamma^u_n(q_0)\subset (W^u(q_0))$, which are 
respectively vertical and horizontal submanifolds in $\B$
converging in the $C^1$ sense 
to  $W^s_\loc (p_0)$ and $\Delta^u(p_0)$. This whole picture persists  under small pertubations, 
so it makes sense to talk about $\widehat {\P T\Gamma^u_n}(q)$  and 
$\widehat {\P T\Gamma^s_n}(q)$
 as submanifolds of 
$\Lambda'\times \B\times \P^1$, and the $C^1$ convergence in $\B$ of $\Gamma^u_n(q_\la)$ and 
$\Gamma^s_n(q_\la)$ at every parameter implies that 
$\widehat {\P T\Gamma^u_n}(q) \to \widehat {\P T\Delta^u}(p)$ and 
$\widehat {\P T\Gamma^s_n}(q) \to \widehat {\P TW^s_\loc}(p)$ in the Hausdorff sense. Therefore 
the  persistence 
of proper intersections entails
 that $\widehat {\P T\Gamma^u_n}(q)$ and $\widehat {\P T\Gamma^s_n}(q)$ 
 must intersect for large $n$, which is the desired result. 

We then conclude the proof by observing that there  must exist $q_0\in \mathcal S_0$ such that 
there is no persistent resonance between the eigenvalues of $q_\la$. Indeed, since there is a non-persistent tangency, the family $(f_\la)$ is not  weakly stable 
 in any neighborhood of $\lo$ (see the Appendix for more explanations). 
 So some saddle point $q\in \mathcal S$ must change type, and since we are in a 
dissipative setting, it  bifurcates to a sink. On the other hand 
assume that  there is a persistent relation of the form $u_\la^as_\la^b =1$ for the eigenvalues of $q_\la$.
Note that $j_\la = \abs{u_\la s_\la}<1$ in the family, so using $\abs{u_0}> 1 > \abs{s_0}$, we get 
 $a>b>0$. Then $\abs{u_\la}^a \abs{s_\la}^b = \abs{u_\la}^{a-b} j_\la^b =1$ and we get that 
 $\abs{u_\la}>1$ at all parameters, contradicting the fact that $q_\la$ becomes   
 a sink in some domain of 
   parameter space. 
\end{proof}

%%%%%%%%%%%%%%%%%%%%%%%%%%%%%%%%%%%
\section{Bifurcations from persistent tangencies}\label{sec:persistent}

\subsection{Proof of Theorem~\ref{thm:persistent}} We argue by contraposition, so assume that
$(f_\lambda)_{\lambda\in \Lambda}$ is a   weakly stable substantial 
 family of polynomial automorphisms of $\C^2$ with a persistent tangency. 
 Recall that substantial means that either the family is dissipative or there is no persistent 
 resonance between eigenvalues of periodic points (see the Appendix). It is a necessary  condition 
 for  the weak stability theory of~\cite{tangencies} to work.
 
Without loss of generality  we may assume  that $p$ is fixed. Our purpose is to show that 
 $\lambda\mapsto \frac{\ln \abs{u_\lambda}}{\ln \abs{s_\lambda}}$ is constant by adapting the theory of moduli of stability from~\cite{palis_moduli, newhouse-palis-takens}. \commentaire{Incorrect sentence removed here, cf. Minor comment \#7.}
%Note that  if $\abs{\jac(f_\lambda)}$ crosses the unit circle, this contradicts the substantiality assumption, so in this case 
%the conclusion is that there is no  such  family with a persistent homoclinic tangency. 
By analytic continuation 
 it is enough to prove the constancy of this function 
 on  an open set of parameters, 
 so   we may freely replace $\Lambda$ by some open subset during the proof. 
 
Working locally in $\Lambda$, we may choose local coordinates depending holomorphically on 
$\lambda$ and iterate the tangency point so that Normalization~\ref{norm:tang} holds, with $\abs{y_0}<1/2$. 
  The order of tangency $h$ is upper-semi-continuous for the analytic Zariski topology (see Remark~\ref{rmk:semi-continuity-persistent}), so we 
   further reduce to some  open 
 subset of $\Lambda$   (still denoted by $\Lambda$)
  where $h$ is minimal. In this  case, arguing as in the proof of property (P3) in 
 \S~\ref{subs:reduction},  
 the tangency point  $\tau_\lambda$ is unique and 
 can be followed holomorphically. 
 
 \commentaire{paragraph added for Exposition Comment \#4} The proof proceeds in two main steps. 
 In Step 1, which is similar to~\cite{palis_moduli}, we work with a fixed parameter and identify 
 a certain region in a branch $\Gamma_n^s$ of $W^s(p)$ close to the tangency
 such that for every $r_n$ in this region, $f^{-m_n}(r_n)$ converges to a compact subset of $W^s(p)\setminus\set{p}$ for a well-controlled time sequence $m_n\approx 
 \frac{\ln\abs{u}}{ \ln\abs{s\inv}} n$. 
 We also check (Step 1') the uniformity of certain quantities appearing in this analysis.  
  In Step 2, we show that for our weakly 
 $\jstar$-stable family, thanks to the properties of holomorphic motions, 
 this picture can be followed continuously with the parameter $\lambda$, which 
 in turn forces $\frac{\ln\abs{u_\lambda}}{ \ln\abs{s_\lambda}}$ to be constant. 
 
 \medskip 
 
 \noindent{\bf Step 1}.   Since we work with a fixed parameter,   for notational simplicity we drop the 
 mention to $\lambda$. 
 As in \S\ref{subs:proof_part1}
 we fix a vertical graph $\Gamma^s$ in $\B$, contained in $W^s(p)$, whose equation is 
 $x = \gamma(y)$, with $\gamma(0) = \alpha$.     Its cut-off pull-back under $f^n$ is 
 $ \Gamma^s_n = \set{x= \gamma_n(y)}$, with $\gamma_n(0) = \alpha u^{-n}$, 
 $\norm{\gamma_n} = O(\abs{u}^{-n})$ and $\norm{\gamma_n'} = o(\abs{u}^{-n})$. 
 For the last estimate we may observe that the 
 saddle fixed point is automatically $C^1$ linearizable 
 (see Hartman~\cite[Thm IV, p. 235]{hartman})
 and argue as in \S\ref{subs:graph_transform}. Alternatively, we may simply use the existence of a $f\inv$-invariant cone field 
 $\mathcal C^s$  with $\mathcal C^s_x = \set{(v_1, v_2)\in T_x\C^2, \ \abs{v_2}\geq C \abs{v_1}}$ as follows: choose   $u'<\abs{u}$ and $s'> \abs{s}$ such that  $ (u')\inv s'  < \abs{u\inv}$.   
If $\B$ is small enough, and  $x\in \B$ and $f\inv(x)\in\B$, then for 
 $v = (v_1, v_2)\in T_x\C^2$,  then  $d f\inv_x(v) = (w_1, w_2)\in \mathcal C_{f\inv(x)}$, with 
 $\abs{w_1}\leq (u')\inv \abs{v_1}$ and $\abs{w_2}\geq (s')\inv\abs{v_2}$. So
 we see that the multiplicative factor 
 for the slope $\abs{w_1/w_2}$  is not greater than  $ (u')\inv s'  < \abs{u\inv} $ and we are done.

 Now we intersect $\Gamma^s_n$ and the branch $\Delta^u$ 
 of $W^u(p)$ tangent to $W^s_\loc(p)$ at $\tau =(0, y_0)$, whose equation is 
 $x= \varphi(y) = \varphi(y_0+t) = c t^{h+1} + O(t^{h+2})$, with $c\neq 0$. 
 From the previous slope estimate,
  $\gamma_n(y_0+ t)  = \alpha u^{-n} + o(u^{-n})$, where the $o(\cdot)$ is uniform in $t$,   the solutions of 
 $\gamma_n(y) = \varphi(y)$ are of the form 
 \begin{equation}\label{eq:polygon}
 \tilde y_n^{(i)} = y_0+\tilde t_n^{(i)}, \text{ where }
\tilde t_n^{(i)} =  \lrpar{\frac{\alpha}{c} u^{-n}}^{1/(h+1)} + o\lrpar{\abs{u}^{- n/(h+1)}}
, \ 1\leq i\leq h+1
 \end{equation}
corresponding to  the various choices of $(h+1)^{\rm th}$ roots (this is elementary : argue as in Lemma~\ref{lem:elementary_analysis}).
 
\begin{lem}\label{lem:distance}
Fix  $0<\e<\frac{1}{10}\abs{\alpha/c}^{1/(h+1)}$, and 
  $r_n\in \Gamma_n^s$   of the form 
\begin{equation}\label{eq:distance}
 r_n = (\gamma_n(y_0+t_0), y_0+t_0) \text{, for a given }
\abs{t_0} \leq \e \abs{u}^{- n/(h+1)}.
\end{equation} Then $d(r_n, \Delta^u)\asymp \abs{u}^{-n}$, and the implied constants depend only on $\e$.
\end{lem}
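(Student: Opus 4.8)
The plan is to estimate $d(r_n,\Delta^u)$ separately from above and from below, working throughout in the local picture near the tangency point $\tau=(0,y_0)$. Write $\Phi(t):=\varphi(y_0+t)=ct^{h+1}+O(t^{h+2})$, so that near $\tau$ the branch $\Delta^u$ is the graph $\{(\Phi(t),y_0+t)\}$; from $\gamma_n(0)=\alpha u^{-n}$ and $\norm{\gamma_n'}=o(\abs{u}^{-n})$ one has $\gamma_n(y_0+t_0)=\alpha u^{-n}+o(\abs{u}^{-n})$ uniformly over $\abs{t_0}\le\e\abs{u}^{-n/(h+1)}$, hence $\abs{x(r_n)}=\abs{\alpha}\abs{u}^{-n}(1+o(1))$.

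For the upper bound I would simply compare $r_n$ with the point of $\Delta^u$ having the same $y$-coordinate: since $\abs{t_0}\le\e\abs{u}^{-n/(h+1)}$,
\begin{equation*}
d(r_n,\Delta^u)\ \le\ \abs{\gamma_n(y_0+t_0)-\Phi(t_0)}\ \le\ \abs{\alpha}\abs{u}^{-n}+C\abs{t_0}^{h+1}+o(\abs{u}^{-n})\ \le\ C(\e)\abs{u}^{-n}.
\end{equation*}

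For the lower bound, I would first reduce to the local graph $\Delta^u_\loc:=\{(\Phi(t),y_0+t):\abs{t}<\eta\}$, $\eta$ a fixed small radius: since $\Delta^u$ is an embedded disk, $\Delta^u\setminus\Delta^u_\loc$ stays at a fixed positive distance from $\tau$, while $r_n\to\tau$ and (by the same comparison as above) $d(r_n,\Delta^u_\loc)\le C(\e)\abs{u}^{-n}\to0$; hence for large $n$ the nearest point of $\Delta^u$ to $r_n$ lies on $\Delta^u_\loc$. Let $t^\ast$ realize $d(r_n,\Delta^u_\loc)$; then $\abs{t^\ast-t_0}\le d(r_n,\Delta^u_\loc)\to0$, so $\abs{t^\ast}\to0$ and $\abs{\Phi(t^\ast)}\le2\abs{c}\abs{t^\ast}^{h+1}$ for $n$ large. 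Now set $\theta:=\tfrac14\abs{\alpha/c}^{1/(h+1)}$, and observe $\e<\theta$ (this is the only place the hypothesis $\e<\tfrac1{10}\abs{\alpha/c}^{1/(h+1)}$ is used) and $2\abs{c}\theta^{h+1}\le\tfrac14\abs{\alpha}$ for $h\ge1$. If $\abs{t^\ast}\le\theta\abs{u}^{-n/(h+1)}$, then $\abs{\Phi(t^\ast)}\le2\abs{c}\theta^{h+1}\abs{u}^{-n}\le\tfrac14\abs{\alpha}\abs{u}^{-n}\le\tfrac12\abs{x(r_n)}$ for $n$ large, so the first coordinate of $r_n-(\Phi(t^\ast),y_0+t^\ast)$ has modulus $\ge\tfrac12\abs{x(r_n)}\asymp\abs{u}^{-n}$; if instead $\abs{t^\ast}>\theta\abs{u}^{-n/(h+1)}$, then its second coordinate has modulus $\ge(\theta-\e)\abs{u}^{-n/(h+1)}\gg\abs{u}^{-n}$ --- in fact this case cannot occur for large $n$, as it contradicts the upper bound. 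In either case $d(r_n,\Delta^u)\gtrsim\abs{u}^{-n}$, with constants depending only on $\e$ (and the fixed data $\alpha,c,h,u$), and combining with the upper bound gives $d(r_n,\Delta^u)\asymp\abs{u}^{-n}$.

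The only step that is not a routine one-variable estimate is the reduction to $\Delta^u_\loc$, i.e.\ ruling out that some piece of $\Delta^u$ far from $\tau$ produces a closer point --- note that $W^u(p)$ accumulates on itself and so returns near $\tau$. I expect this to be the main (and only mild) obstacle; it is handled by using that $\Delta^u$ is a single embedded disk, which in a fixed neighbourhood of $\tau$ coincides with the graph $x=\varphi(y)$, together with the upper bound already proved. The rest of the argument is the same kind of bookkeeping as in Lemma~\ref{lem:elementary_analysis}.
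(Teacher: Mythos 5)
Your proof is correct and follows essentially the same route as the paper: the upper bound by comparing with the point of $\Delta^u$ having the same $y$-coordinate ($t=t_0$), and the lower bound by showing the closest point must have small $t$, so that $|ct^{h+1}|$ is a definite fraction of $|\alpha u^{-n}|$ and the $x$-coordinate gap is $\asymp|u|^{-n}$. The only difference is that you make explicit the reduction to the local graph piece of $\Delta^u$, which the paper handles implicitly by minimizing only over small $t$.
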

 
% In other words, the closest point from $r_n$ to $\Delta^u$ is close to $(0, y_0)$. 
 
\begin{proof}
Switching to the sup norm for notational simplicity, we have to minimize 
\begin{align}
\notag d((\gamma_n(y_0+t_0), y_0+t_0),  &(\varphi(y_0+t), y_0+t)) = 
\max\lrpar{ \abs{\gamma_n(y_0+t_0) - \varphi(y_0+t)}, \abs{t- t_0}} \\ & = 
 \max\lrpar{ \abs{\alpha u^{-n} +o(\abs{u}^{-n})  - c t^{h+1} + O(t^{h+2})  }, \abs{t- t_0}} 
\end{align}
for small $t$. First considering  $t=t_0$ we see that 
$d(r_n, \Delta^u)\lesssim  \abs{u}^{-n}$ so the minimal distance is achieved in the domain 
$\abs{t-t_0}\lesssim \abs{u}^{- n}$ (otherwise the second coordinate becomes too large). Now if 
$\abs{t-t_0}\lesssim \abs{u}^{- n}$, then $\abs{t} \leq \e \abs{u}^{-n/(h+1)} + O(\abs{u}^{-n})$, so 
if $n$ is large enough, 
$\abs{ct^{h+1}}\leq \unsur{5}\abs{\alpha u^{-n}}$ , hence  
$ \abs{\alpha u^{-n} - c t^{h+1} + O(t^{h+2})}  \asymp \abs{u}^{-n} $ and the result follows. 
\end{proof}
 
\begin{lem}\label{lem:compact}
If $r_n$ satisfies the assumption~\eqref{eq:distance} from  the previous lemma, and 
$(m_n)$ is a sequence such that $\abs{m_n  - \frac{\ln{\abs{u}}}{\ln\abs{s\inv}} n}\leq B$, then 
there exists a compact subset $L$ of  $W^s(p)\setminus \set{p}$ (for the   topology induced by the biholomorphism $W^s(p)\simeq \C$), depending only on $B$ and 
 containing the  cluster set of $(f^{-m_n}(r_n))$. 

Conversely if all cluster values of $(f^{-m_n}(r_n))$ are contained 
in some compact subset $L \subset W^s(p)\setminus \set{p}$, then 
$\abs{m_n  - \frac{\ln{\abs{u}}}{\ln\abs{s\inv}} n}\leq B(L)$.
%there exists    sequences $(n_j)_{j\geq 0}$ and  $(m_{j})_{j\geq 0}$, with 
%$(m_{n_j})$ with $m_{n_j} = -\frac{\ln{\abs{u}}}{\ln\abs{s}} n_j + O(1)$ such that
%$f^{-m_{j}}(r_{n_j}) \to \zeta\in W^s(p)\setminus \set{p}$ as $j\to \infty$. Conversely, if 
%$f^{-m_{j}}(r_{n_j}) \to \zeta\in W^s(p)\setminus \set{p}$ as $j\to \infty$, then 
%$m_{n_j} = -\frac{\ln{\abs{u}}}{\ln\abs{s}} n_j + O(1)$.
\end{lem}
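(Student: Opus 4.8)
The plan is to understand the position of $r_n$ relative to the exponentially-shrinking tube around $W^s_\loc(p)$ and to track how $f^{-m_n}$ moves it out along $W^s(p)$. First I would record the two geometric facts that pin down $r_n$: by Lemma~\ref{lem:distance} we have $d(r_n,\Delta^u)\asymp\abs{u}^{-n}$, and since $r_n\in\Gamma^s_n$ its first coordinate is $\gamma_n(y_0+t_0)=\alpha u^{-n}+o(\abs{u}^{-n})$ while its second coordinate is $y_0+t_0$ with $\abs{t_0}\le\e\abs{u}^{-n/(h+1)}$. Thus $r_n$ lies in the thin vertical strip $\set{\abs{x}\asymp\abs{u}^{-n}}$, at a definite (bounded away from $0$ and $\infty$) transverse distance inside $W^s_\loc(p)\simeq\set{x=0}$, and at a point of the separatrix whose distance to $\tau=(0,y_0)$ is $O(\abs{u}^{-n/(h+1)})$, hence $\to 0$.

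Next I would iterate $f^{-1}$, using that the saddle is linear on $W^{s/u}_\loc(p)$ and admits an invariant cone field (as set up in Step~1): applying $f^{-1}$ expands the $x$-coordinate by $\abs{u}$ and, because $r_n$ stays in the cone $\mathcal C^s$, keeps the point in the vertical strip whose width is comparable to its $x$-coordinate. So after $j$ backward iterates the first coordinate of $f^{-j}(r_n)$ has modulus $\asymp\abs{u}^{j-n}$. Choosing $j=m_n$ with $\abs{m_n-\tfrac{\ln\abs u}{\ln\abs s}n}\le B$ gives $\abs{u}^{m_n-n}=\abs{u}^{-n}\abs{u}^{m_n}\asymp\abs{u}^{-n}\abs{s}^{-n}=\abs{us}^{-n}$... — more precisely, $\abs{u}^{m_n-n}$ ranges in $[\abs{u}^{-B},\abs{u}^{B}]\cdot\abs{u}^{n(\frac{\ln\abs u}{\ln\abs s}-1)}$, and since $\tfrac{\ln\abs u}{\ln\abs s}-1<0$ this tends to $0$ at a controlled exponential rate. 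Hence $f^{-m_n}(r_n)$ converges to $W^s_\loc(p)$, its $x$-coordinate confined between two constants (depending only on $B$) times $\abs{u}^{n(\frac{\ln\abs u}{\ln\abs s}-1)}$, while staying uniformly transverse; once $\abs{x}$ is small enough we are in $W^s_\loc(p)$ and can read off, via the linear model, that the point sits at parameter (in the $\C$-coordinate on $W^s(p)$) with modulus in a fixed annulus $A(B)\subset\C^\varstar$. Therefore the cluster set of $(f^{-m_n}(r_n))$ lies in the compact set $L:=\overline{A(B)}\subset W^s(p)\setminus\set p$, which depends only on $B$ (and on the fixed data $\alpha,c,\e$, which I would absorb into the statement's ``only on $B$'' by noting $\e$ is fixed). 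This proves the first assertion.

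For the converse I would run the same estimate in reverse. If all cluster values of $f^{-m_n}(r_n)$ lie in a compact $L\subset W^s(p)\setminus\set p$, then along $W^s(p)\simeq\C$ the points $f^{-m_n}(r_n)$ have modulus bounded above and below by positive constants $c_1(L)\le\abs{\pi(f^{-m_n}(r_n))}\le c_2(L)$ for large $n$; pushing forward by $f^{m_n}$ and using linearity on $W^s_\loc$ (after finitely many steps the whole orbit segment lies in $\B$, since $f^{-m_n}(r_n)$ is eventually close to $W^s_\loc(p)$), the $x$-coordinate of $r_n$ satisfies $\abs{x(r_n)}\asymp\abs{s}^{m_n}\cdot(\text{bounded})$. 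But we already know $\abs{x(r_n)}\asymp\abs{u}^{-n}$ from $r_n\in\Gamma^s_n$. Equating, $\abs{s}^{m_n}\asymp\abs{u}^{-n}$, i.e. $m_n\ln\abs s=-n\ln\abs u+O(1)$, which is exactly $\abs{m_n-\tfrac{\ln\abs u}{\ln\abs s}n}\le B(L)$.

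The main obstacle I anticipate is controlling the transverse coordinate during the backward iteration well enough to guarantee that $f^{-m_n}(r_n)$ genuinely enters $W^s_\loc(p)$ (and not merely a shrinking neighborhood), so that the clean linear model can be invoked to convert ``size of the $x$-coordinate'' into ``position along $W^s(p)\simeq\C$'' with constants depending only on $B$. This is where the invariant cone field and the linearity of $f$ on $W^{s/u}_\loc(p)$ from Step~1 do the real work: the cone field forces the point to track $W^s_\loc$ rather than drift off, and linearity makes the correspondence between the $x$-coordinate at time $n$ and the $\C$-coordinate at time $m_n$ multiplicative up to a uniformly bounded factor. Once that is in place, both directions are just matching exponential rates.
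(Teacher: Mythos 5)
Your central dynamical claim is inverted, and the argument collapses there. In the paper's normalization $f(x,y)=(ux(1+yg_1),\,sy(1+xg_2))$, so $f^{-1}$ multiplies the $x$-coordinate by $\approx u^{-1}$ (contraction) and the $y$-coordinate by $\approx s^{-1}$ (expansion); your assertion that ``applying $f^{-1}$ expands the $x$-coordinate by $\abs{u}$'', leading to $\abs{x(f^{-j}(r_n))}\asymp\abs{u}^{j-n}$, is therefore false. Worse, the backward orbit of $r_n$ does not remain in a thin vertical strip around $W^s_\loc(p)$ at all: its $y$-coordinate, of modulus $\approx\abs{y_0}$, is multiplied by $\approx\abs{s}^{-1}$ at each backward step, so the orbit exits $\B$ after a bounded number of steps. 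What the backward orbit actually tracks is $W^u(p)$ (the stable manifold of $f^{-1}$), which is exactly why Lemma~\ref{lem:distance} computes $d(r_n,\Delta^u)\asymp\abs{u}^{-n}$ --- the quantity you record at the outset but never use. The paper's proof runs through it: $d(r_n,W^u(p))\asymp\abs{u}^{-n}\asymp\abs{s}^{m_n}$ by the hypothesis on $m_n$; after $O(1)$ backward steps the orbit re-enters the chart near $W^u_\loc(p)\setminus\set{p}$ with transverse coordinate $\asymp\abs{u}^{-n}$, and during the remaining $\approx m_n$ steps the ($C^1$-linearized) local dynamics expands this transverse coordinate by $\abs{s}^{-1}$ per step while contracting the coordinate along $W^u_\loc$; the choice of $m_n$ makes the final transverse size $\abs{s}^{-m_n}\abs{u}^{-n}$ bounded above and below in terms of $B$, so $f^{-m_n}(r_n)$ ends up near $W^s_\loc(p)$ at a height bounded away from $0$, and its cluster values lie in a compact subset of $W^s(p)\setminus\set{p}$ depending only on $B$. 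Note also that your exponent computation implicitly assumes $\frac{\ln\abs{u}}{\ln\abs{s}}-1<0$ in the sense $\abs{us}<1$; no dissipativity is assumed in the lemma, and in the paper's application $m_n\approx-\frac{\ln\abs{u}}{\ln\abs{s}}\,n$ may well exceed $n$.

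A second, related gap is the conflation of ambient and intrinsic positions on $W^s(p)$. The points $f^{-m_n}(r_n)$ are intrinsically unbounded: since $f^n(r_n)$ lies in the fixed compact piece $\Gamma^s\subset W^s(p)\setminus\set{p}$, their coordinate in $W^s(p)\simeq\C$ has modulus $\asymp\abs{s}^{-(n+m_n)}\to\infty$. Only their cluster values in the ambient topology of $\C^2$ lie on $W^s_\loc(p)$, hence at bounded intrinsic distance from $p$; so ``once $\abs{x}$ is small enough we are in $W^s_\loc(p)$'' is not correct (the points sit on far-away folds of $W^s(p)$ that merely accumulate $W^s_\loc(p)$), and in your converse the premise that the points $f^{-m_n}(r_n)$ themselves have $\C$-coordinate of modulus bounded above and below is false for the same reason. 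The converse, too, has to be run through the growth of $d(f^{-j}(r_n),W^u(p))$ under backward iteration, not through the $x$-coordinate of $r_n$ pushed along $W^s_\loc$.
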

 
Note that by construction, since $r_n\in \Gamma^s_n$, 
 $f^n(r_n)$ converges to $(\alpha, 0) \in W^u(p)\setminus \set{p}$ as $n\to+\infty$.  
 
 \begin{proof}
By the previous lemma, $d(r_n, W^u(p))\asymp \abs{u} ^{-n} = \abs{s}^{n \frac{\ln \abs{u}}{\ln\abs{s\inv}}}$. So if $m_n = \frac{\ln{\abs{u}}}{\ln\abs{s\inv}} n + O(1)$, we get 
  $d(r_n, W^u(p))\asymp d(r_n, \tau)\asymp \abs{s}^{m_n}$, and 
  classical  local analysis near $p$  shows that 
  $f^{-m_n}(r_n)$ accumulates  only on a compact subset of $W^s(p)\setminus \set{p}$. One possibility 
  for this is to use the existence of a $C^1$ linearization (this is the approach in~\cite{palis_moduli}); 
  alternatively we can work 
 with a convenient normal form (see~\cite[Lem. 4.2]{df:DMM} for a very similar statement). 
  The details, as well the converse statement, are left to the reader.
 \end{proof}
 
% \begin{rmk}
%In the   family $(f_\la)$, the complex numbers 
% $y_0$, $\alpha$ and $c$ depend holomorphically on $\lambda$, and the conclusions of Lemmas~\ref{lem:distance} and~\ref{lem:compact} hold locally uniformly.
%  \end{rmk}

\noindent{\bf Step 1':} uniformity of the estimates.  \commentaire{New step in the proof to give some details on the dependence on parameters, cf Exposition remark \#3. }

Here we review the proof of Step 1 to check that when $f_\lambda$ varies in the family, the estimates of 
Lemmas~\ref{lem:distance} and~\ref{lem:compact} hold locally uniformly. First, the saddle property implies that there is no resonance up to order 2, so by Proposition~\ref{prop:sternberg}
 $f_\lambda$ can be put in the form $(\star_1)$, by a change of coordinates depending holomorphically on $\lambda$. In particular  there is a local uniform bound on  $\norm{g_1}$ and $\norm{g_2}$. 
The vertical graph $\Gamma^s$ can be chosen to belong to a locally persistent horseshoe, so it varies holomorphically. Then the slope estimate depends only on the multipliers (and implicitly on $\norm{g_1}$ and $\norm{g_2}$) so it is locally uniform. The complex numbers 
 $y_0$, $\alpha$ and $c$ depend holomorphically on $\lambda$ and the $o(\cdot)$ in~\eqref{eq:polygon} is locally uniform as it derives from Rouché-type estimates as in Lemma~\ref{lem:elementary_analysis}. This implies that the implied constants in 
Lemma~\ref{lem:distance} are locally uniform as well. Finally, for Lemma~\ref{lem:compact}, we will need a uniformity in $B$ in terms of the compact subset  $L$, and conversely. First, since we work in the normalization  $(\star_1)$, the notion of a uniform compact subset $L$ in $W^s(p_\lambda)$ makes sense. For the uniformity, we can either check that the $C^1$ linearization varies continuously with $(f_\lambda)$ (there appears to be no such statement in~\cite{hartman}, but it can be deduced from its proof\footnote{Another possibility is to argue  by contradiction and    assume that $\lambda \mapsto \frac{\ln \abs{u_\lambda}}{\ln \abs{s_\lambda}}$ is non-constant. 
In this case then we can work outside a finite set of parameter with resonances, and then the existence of a $C^1$ linearization varying $C^1$ with the parameter follows from~\cite[Thm 9]{sell}.}), or resort to an easy inspection of  the proof of~\cite[Lem. 4.2]{df:DMM}.

\noindent{\bf Step 2.} Now we take advantage of the global holomorphic structure to show  
 the invariance of $\frac{\ln{\abs{u_\lambda}}}{\ln\abs{s_\lambda}} $ in   weakly stable families. 
 %Let us start by observing that   we can make everything in Step 1 depend  on a  parameter $\lambda$. 
 
By applying the automatic extension properties of plane holomorphic motions, it 
was shown in \cite[Thm 5.12 and Cor. 5.14]{tangencies} that in a   weakly stable family, the branched 
holomorphic motion of saddles extends to an equivariant normal branched holomorphic motion of $J^+
\cup J^-$, which preserves the stable and unstable manifolds of saddles. 
In some stable manifold $W^s(p_\lambda)\simeq \C$, it is  obtained by 
applying the canonical Bers-Royden  extension theorem~\cite{bers-royden}
 to the motion of $W^s(p_\lambda)\cap \jstar_\la$, viewed as a subset of $\C$.

With notation as in Step 1, 
start with some parameter $\lo$, and pick a 
point $r_n(\lo)\in W^s(p_\lambda)$ as in~\eqref{eq:distance}, 
for instance  
$r_n(\lo) = (\gamma_n(y_0(\lo)), y_0(\lo))$, corresponding to $t_0=0$. As explained above 
it admits a natural continuation $r_n(\la)$ under the branched holomorphic 
motion of $J^+$. 
% (note that if we can  choose $r(\lo)\in \jstar_0$, then there is no need to consider the extended motion). 
The key step is the following:

\begin{lem}\label{lem:polygon} Given 
$\e< \min_{\la\in \Lambda} \frac{1}{10}\abs{\alpha(\lambda)/c(\lambda)}^{1/(h+1)}$, 
there exists a neighborhood $\Lambda_1$ of $\lo$ such that for large enough $n$, 
$r_n(\lambda)$ satisfies the assumption~\eqref{eq:distance}, that is for $\lambda\in \Lambda_1$, 
$r_n(\lambda) = (\gamma_{n, \lambda}( y_1(\la)), y_1(\la))$, with 
$\abs{y_1(\la) - y_0(\lambda)} <\e \abs{u_\lambda}^{-n/(h+1)}$. 
\end{lem}

From this point, the proof of Theorem~\ref{thm:persistent} is readily completed. Indeed, 
Lemma~\ref{lem:compact} applies so we can fix a sequence 
$(m_{j})_{j\geq 0}$   with $m_{j} = -\frac{\ln{\abs{u_0}}}{\ln\abs{s_0}} n_j + O(1)$ such that
$$f_\lo^{-m_{j}}(r_{n_j}(\lo)) \underset{j\to\infty}\longrightarrow
\zeta(\lo)\in W^s_\loc(p_0)\setminus \set{p_0}.$$ By the normality 
of the branched motion of $J^+$, 
the continuations $f_\la^{-m_{j}}(r_{n_j}(\la))$ form a normal family of graphs in $\Lambda\times \C^2$. Extracting again, we may assume that it converges to $\la\mapsto \zeta(\la)$. Since the motion of $J^+_\la$ respects stable manifolds of saddle points and is unbranched at saddle points, we infer that $\zeta(\lambda)\in W^s(p_\lambda)\setminus \set{p_\lambda}$ for every $\lambda\in \Lambda_1$, 
and we conclude 
from the converse statement of Lemma~\ref{lem:compact} that 
$\abs{m_j  - \frac{\ln{\abs{u_\lambda}}}{\ln\abs{s_\lambda\inv }} n_j}\leq B$, and finally, 
$$\frac{\ln{\abs{u_\lambda}}}{\ln\abs{s_\lambda\inv}} = \lim_{j\to\infty} \frac{m_j}{n_j} = \frac{\ln{\abs{u_0}}}{\ln\abs{s_0\inv}},$$
as desired. \qed
 
\begin{proof}[Proof of Lemma~\ref{lem:polygon}]  
By equation~\eqref{eq:polygon}, 
in the parameterization of $\Gamma_{n, \lambda}^s$ by the 
$y$-coordi\-nate, 
the  intersection points of $\Gamma_{n, \lambda}^s$ and $\Delta^u_\lambda$
 form approximately a regular $(h+1)$-gon of size 
$\asymp\abs{u_\la}^{-n/(h+1)}$.  At the parameter 
$\lo$, $y_1(\lo) = y_0(\lo)$, so 
$r_n(\lo)$ is approximately the center of this polygon. Observe that if $h=1$ this polygon 
is just a segment and $r_n(\lo)$ is close to its midpoint. \commentaire{New sentence for Major remark \#2.}
To prove
 that the estimate~\eqref{eq:distance} holds in a neighborhood of $\lo$, 
 it is enough to show that for $\la$ close to $\lo$, 
 $r_n(\la)$ remains close to the center of the polygon. For $h\geq 2$, this means that 
in the $y$-coordinate, the distances $\big|{y_1(\lambda) - \tilde y_n^{(i)}(\lambda)}\big|$ 
remain approximately equal to each other, when  varying $i$. \commentaire{continued}For $h=1$ this means that 
$$\abs{y_1(\lambda) - \tilde y_n^{(1)}(\lambda)}\simeq \abs{y_1(\lambda) - \tilde y_n^{(2)}(\lambda)}
 \simeq \frac12 \abs{\tilde y_n^{(1)}(\lambda)- \tilde y_n^{(2)}(\lambda)}.$$
For this we use a quasiconformal distortion argument. 

To be precise, we say that a map $\phi$ defined in some subset of the plane has distortion at most 
$\delta$ if for every triple of distinct points $x,y,z$, 
\begin{equation}
 \ln\lrpar{ \frac{\abs{\phi(y) - \phi(x)}}{\abs{\phi(z) - \phi(x)}} \cdot \frac{\abs{z - x}}{\abs{y-x}} } \leq \delta.
\end{equation}
With this definition, the distortion is subadditive under composition.  
We choose $\delta$ once for all such that if the distortion of 
$\big\{(\tilde y_n^{(i)}(\la))_{1\leq i\leq h+1}, y_1(\la)\big\}$ with respect to a regular
$(h+1)$-gon together with its center  (\commentaire{continued}resp. a segment with respect to its midpoint for $h=1$)
 is bounded by 
$5\delta$, then $\abs{y_1(\la) - y_0(\la)} < \e \abs{u_\la}^{-n/(h+1)}$.

As said above, 
  the points $\tilde y_n^{(i)}(\lo)$ together with $y_0(\lo)$ 
  form a regular polygon  with its center
  up to a small distortion $\delta$ for $n\geq n_0(\delta)$. Recall that by assumption 
  $\abs{y_0(\la) }<1/2$.
 Consider an  affine parameterization $ \C\ni \zeta \mapsto \psi_\la^s(\zeta)\in W^s(p)$, mapping  0 to $p$. Two such parameterizations differ by a similitude that we will not need to specify, 
 since we consider only ratios of distances. Let 
 $\tilde \zeta_n^{(i)} (\la)  = (\psi_\la^s)\inv(\tilde y_n^{(i)}(\la))$. We claim that  
  for large enough $n$ and every $\la\in \Lambda $, 
 the $\tilde \zeta_n^{(i)}(\la) $ also form a regular polygon up to distortion $\delta$.  Indeed, the map 
 $\D\ni y \mapsto (\psi^s_\la)\inv (\gamma_{n, \la}(y), y) \in \C$ is univalent, so by the Koebe distortion theorem its distortion on a disk of radius $O(\abs{u_\la}^{-n/(h+1)})$ contained  
 in $D(0, 3/4)$ is $O(\abs{u_\la}^{-n/(h+1)})$.  Thus there exists $n_1(\delta)$ so that for 
 $n\geq n_1(\delta)$ this last term is smaller than 
 $\delta$ for every $\la\in \Lambda $, and the claim follows. 
 
 Let $\zeta_1 (\la)  = (\psi_\lambda^s)\inv(r_n(\la))$,
 which is by definition the   motion of $\zeta_1(\lo)$ 
 under the Bers-Royden extension of the holomorphic motion of 
 $(\psi_\la^s)\inv (\jstar)$. We claim  that  there exists a neighborhood $\Lambda_1$ of $\lo$, 
 such that for every $\la\in \Lambda_1$ and   every 
 $n\geq n_1(\delta)$, the distortion of 
$\big\{ (\tilde \zeta_n^{(i)}(\lambda))_{1\leq i\leq r}, \zeta_1 (\la) \big\}$ with respect to  
$\big\{(\tilde \zeta_n^{(i)}(\lo))_{1\leq i\leq r}, \zeta_1 (\lo) \big\}$ is smaller than $\delta$. 
A way to see this is to 
use the fact that the Bers-Royden extension is canonical, hence automatically equivariant, so 
for $\la =\lo$ 
we can bring   the polygon to unit scale by appropriately iterating $f_\lambda$ (which is just a linear contraction in the $\zeta$-coordinate), then use 
 the uniform continuity  of the holomorphic motion at that scale (see e.g. 
\cite[Cor. 2]{bers-royden}), and then bringing it back to the original scale. 

Finally, we map 
$\big\{ (\tilde \zeta_n^{(i)}(\lambda))_{1\leq i\leq r}, \zeta_1 (\la) \big\}$ back to 
$\Gamma_{n, \la}$ by $\psi^s_\la$, which adds one more $\delta$ of distortion. Altogether,
for $\lambda\in \Lambda_1$, 
the total distortion of $\big\{(\tilde y_n^{(i)}(\la))_{1\leq i\leq h+1}, y_1(\la)\big\}$ with respect to 
a regular polygon together with its center is at most $4\delta$,  
and by the choice of $\delta$, 
the proof is complete. 
\end{proof}

\begin{rmk}
The proof carries over without essential change to the heteroclinic case,  showing that if a 
weakly stable family admits a    persistent heteroclinic tangency between
$W^s(p_1) $ and $W^u(p_2)$, then $\frac{\ln\abs{u_1}}{\ln \abs{s_2}}$ is constant. 
\end{rmk}

\subsection{Proof of Theorem~\ref{thm:newhouse_henon}}  
Thanks to  the Friedland-Milnor classification~\cite{FM},  we can assume that 
$\Lambda$ is  the space of generalized Hénon maps of some given degree sequence $(d_1, \ldots, d_k)$, that 
is, maps  of the form $h_{P_1, a_1}\circ \cdots \circ  h_{P_k, a_k}$, where $h_{P, a}(z,w) = (aw+P(z), z)$, 
so we identify $\Lambda$
 with   $\C^N\times (\C^\varstar)^k$. 
 We refer to $\C^N\times \C^k\setminus\C^N\times (\C^\varstar)^k$ 
 as the \emph{zero Jacobian locus}\commentaire{cf. Minor comment \#9}, and to 
$\widetilde \Lambda  := \C^N\times \C^k$ as the \emph{extended parameter space}. 
 
If $f_\lambda$ admits a homoclinic tangency, by  the Kupka-Smale property~\cite{BHI} (\footnote{The main theorems in the introduction of~\cite{BHI} are stated for the space of Hénon maps of degree $d$, but the authors make it clear in \S2.1 that they hold in any irreducible component of the space of generalized Hénon maps.}), 
the tangency is not locally persistent in $\Lambda$, so by Lemma~\ref{lem:higherdim}
this 
tangency persists on some local hypersurface $\mathcal T\subset \Lambda$. 

\commentaire{Paragraph added for Exposition remark \#4.} The proof will proceed in several steps. 
The first one consists in showing that we can always 
reduce to the case where $f_{\lo}$ is dissipative. 
From this point, we argue by contradiction and assume that there is a tangency parameter $\lo$ 
(associated to some primary saddle point $p = (p_\lambda)$) 
and a connected open neighborhood  $U$ of $\lo$ in $\Lambda$ 
such that 
for any $\la_1\in U$ displaying a homoclinic tangency, associated to any saddle periodic point $q$, the function $\lambda \mapsto \frac{\ln\abs{u_\la(q )}}{\ln\abs{s_\la(q )}}$ is 
constant along the corresponding local hypersurface $\mathcal T_1$. 
Note that $\mathcal T_1$ may be singular, in which case 
 the assumption means  that the constancy holds on all components of $\mathcal T_1$. 
In the second step, we show that under the contradiction hypothesis 
 the tangency locus associated to $p$ is subordinate to a real analytic foliation 
 by complex hypersurfaces
  $\mathcal F(p)$ defined in terms of the multipliers of $p$. In a third stage, we show
 that if $q$ is another saddle periodic point, the foliations $\mathcal F(p)$ and $\mathcal F(q)$ coincide, 
 so there is a canonical foliated structure in $U$. Finally, by a convenient choice of $q$ we can extend this foliation  all the way to the zero Jacobian locus to arrive at a contradiction.

Let us start with a few reductions. 
%Replacing $f_\lo$ by its inverse   and restricting to a smaller subset   if necessary, we may assume that $\abs{\jac f_\la}< 1$ in $U$. 
By Corollary~\ref{cor:quadratic_henon}, switching if necessary
 to another periodic point 
 (still denoted by $p$ for simplicity),  we may assume that the tangency is 
quadratic and unfolds with positive speed, in which case the corresponding hypersurface 
$\mathcal T$ is smooth %(see~\S\ref{subs:comments_higherdim}). 
by Lemma~\ref{lem:higherdim}. 
Abusing slightly,   we  assume for notational
convenience that $p$ is fixed\footnote{This is really an abuse because we cannot simply replace $f$ by $f^N$. 
Indeed  this would mean working 
 in  some proper subset of a component of the space of polynomial automorphisms of degree $d^N$ (i.e. the space of $N^{\rm th}$ iterates of automorphisms of degree $d$), while our argument requires to have all the component at our disposal.}  
% and we put ourselves in the setting of \S\ref{subs:conventions}. 
and apply  Normalization~\ref{norm:tang}.  Reducing $U$ if needed, we may also assume that the horizontal branch $\Delta^u_\lambda$ is of degree 2. 
As in \S\ref{subs:reduction} we fix a small
horseshoe $\mathcal H$ containing $p$,      whose local stable manifolds,
denoted $W^s_\loc(x)$, for  $x\in \mathcal H$ 
(and $W^s_\loc(\mathcal H)$ for the corresponding local lamination)
 are vertical graphs in $\B$. 
Fix a countable set $(\Theta_\ell)_{\ell \geq 0}$  of vertical graphs contained in  
$W^s(p)\cap W^s_\loc(\mathcal H)$,  
   that  is dense in $W^s_\loc(\mathcal H)$. \commentaire{Fixed and explain notation, cf Minor comment \#10.}
Reducing $U$, we may assume that these objects  can be followed holomorphically  throughout $U$. 
If the horseshoe was chosen to be small enough, 
unfolding the tangency thus locally produces countably many disjoint 
hypersurfaces $\mathcal T_\ell$ in $U$ 
of generic homoclinic tangencies associated to $p$ 
(each of which corresponding to the tangency locus between 
$\Theta_{\ell, \lambda}$ and $\Delta^u_\lambda$), and  all of them are smooth.  
Since $\bigcup_\ell\mathcal T_\ell$ is Zariski dense in $U$, 
by \cite[Thm 1.4]{BHI}, moving to some other parameter $\la_0'\in \bigcup_\ell\mathcal T_\ell$ if necessary,
we may assume that the multiplier map 
  $\lambda \mapsto (s_\la(p), u_\la(p))\in \C^2$ is a submersion at $\la_0'$. 
  Rename $\la_0'$ into 
  $\lo$ and   the corresponding hypersurface by $\mathcal T$, and repeat
the previous reductions and the construction of the horseshoe 
 for this new parameter, so that we get a new sequence $(\mathcal T_\ell)$. 
 Reducing $U$ again we assume that 
  $\lambda \mapsto (s_\la(p), u_\la(p))$ is a submersion everywhere in $U$. 
  
  \medskip
  
 \noindent{\bf Step 1:} reduction to the dissipative case. \commentaire{New step for Major Remark \#3}
 
 The inverse of a composition of $k$ Hénon maps is conjugate to a composition of $k$ Hénon maps. 
Elementary calculations   show that the conjugating automorphism
 can be chosen to be linear and to locally depend holomorphically on $f$ (not globally because the choice of a $(d-1)^{\rm{th}}$ root of $\jac(f)$ is involved (see \cite[Thm. 2.6]{FM} or \cite[\S 6.1]{multipliers}). 
Therefore, if $\lambda_0\in \Lambda$ is a tangency parameter with $\abs{\jac(f_\lo)}>1$, 
there is a neighborhood $N$ of $\lo$ and 
a    holomorphic map $\iota:N\to \Lambda$    such that  
$f_{\iota(\lambda)}$ is conjugate to $f_\la\inv$. 
Using this local involution, 
it is enough to establish the theorem for the dissipative map $f_\lo\inv$. 

The argument for the conservative case is more delicate and of general interest.

 \begin{lem}\label{lem:conservative}
 If $\lambda_0$ is a parameter with a non-persistent homoclinic 
 tangency for the saddle point $p$ 
 and $\abs{\jac(f_{\lambda_0})}=1$,  then there is a tangency 
 parameter $\lambda_1$ arbitrary close to $\lambda_0$ (still associated to $p$) at which 
 $\abs{\jac(f_{\lambda_1})}< 1$.  
 \end{lem}

This lemma, together with~\cite[Cor. 4.5]{tangencies}, also shows that the second assertion of the theorem follows from the first one.

\begin{proof}
With notation as above, 
assume by contradiction that  $\bigcup_\ell \mathcal T_\ell$ is contained in the non-dissipative  
locus $\mathrm{NDis} = \set{\la\colon \abs{\jac(f_\la)} \geq  1}$.
Let also $\mathrm{Cons} =   \set{\la\colon \abs{\jac(f_\la)} =  1}$ be the conservative locus. 
Since $\lambda\mapsto (u_\la(p), s_\la(p))$ is a submersion near $\lo$, 
$\mathrm{Cons}$ is a smooth real-analytic hypersurface bounding two domains, one of them being $\mathrm{NDis}$.  

Recall that $\mathcal T$ is the locally defined  hypersurface in $\La$ 
where the initial tangency persists. 
If $\mathcal T$ is not contained in $\mathrm{Cons}$, then it meets the dissipative locus and we are done. So we may assume that $\mathcal T$ is  contained in 
$\mathrm{Cons}$, in which case the Jacobian is constant along $\mathcal T$. 

By assumption, there is an open  set of holomorphic disks $\Sigma$
through $\la_0$ along which the tangency unfolds with positive speed.  We pick one such   disk 
 that is transverse to $\mathrm{Cons}$. 
 Then near $\lo$,  $\mathrm{Cons}\cap \Sigma$ is a real analytic curve,  
 and  the countable set 
$\bigcup \mathcal T_\ell\cap \Sigma$ is contained in $\mathrm{NDis}\cap \Sigma$, 
which is one side of this curve.  
By Lemma~\ref{lem:lambda_n} and Remark~\ref{rmk:similarity}, identifying  $(\Sigma, \lo)$ with 
$(\D, 0)$, and taking the truncated pull backs of some initial graph $\Theta_{\ell_0}$, 
 we can construct a sequence $(\lambda_n)$ of   parameters in 
$ \bigcup \mathcal T_\ell\cap \Sigma$ such that 
 $\lambda_n\cong u_{\lambda_0}(p)^{-n}$. The fact that 
  $\lambda_n$ stays on one side of a smooth curve
then  forces $u_{\lo}(p)$ to be a real number. Reverting the roles of the stable and the unstable directions, we obtain that $s_{\lo}(p)$ is real as well, so 
 $\jac(f_\lo)=\pm 1$. Since $\jac{(f_\lambda)}$ is constant along $\mathcal T$, 
 we can repeat this whole reasoning for any 
  nearby parameter   $\la_1\in \mathcal T$. It follows  that  $u_{\la_1}(p)$ and $s_{\la_1}(p)$ 
  are real, so they must be constant on the hypersurface $\mathcal T$. But this is impossible
  since the locus $\set{\la: (u_\la(p), s_\la(p)) = (u_\lo(p), s_\lo(p))}$ must be of codimension 2. This 
  contradiction finishes the proof. 
\end{proof}

 \medskip
 
 \noindent{\bf Step 2:} foliating the tangency locus. 

After Step 1, we   assume  that  $\abs{\jac(f_\lo)} < 1$. Recall that from now on we argue by contradiction and assume that 
in some neighborhood of $\lo$, 
 along any hypersurface of homoclinic tangencies (associated to some saddle periodic point $q$) the 
 function $\lambda \mapsto \frac{\ln\abs{u_\la(q )}}{\ln\abs{s_\la(q )}}$ is constant. 
 
 \begin{lem} \label{lem:jacobian}
The Jacobian $\jac (f_\la)$ is not constant along $\mathcal T$. Likewise, 
$u_\lambda(p)$  and $s_\lambda(p)$ are not constant. 
\end{lem}

\begin{proof}
Indeed by assumption $\frac{\ln\abs{u_\la(p)}}{\ln\abs{s_\la(p)}} = c$ along $\mathcal T$. Since $f_\la$ is dissipative, it follows  that  $c\in (-1, 0)$. Assume by contradiction that $\jac (f_\la) \equiv j $ on 
  $\mathcal T$. Then from $u_\la s_\la = j$ we get 
$$\frac{\ln\abs{u_\la(p)}}{\ln\abs{j/u_\la(p)}} = c\text{, hence    } \ln\abs{u_\la(p)} = \frac{c}{1+c} \ln \abs{j},$$ and it follows that 
$ \abs{u_\la(p)}$ is constant along $\mathcal T$, hence so is  $u_\la(p)$. Likewise,  
  $s_\la(p)$ is constant along $\mathcal T$ because 
$u_\la s_\la = j$.  This is a contradiction
because $\lambda \mapsto (s_\la(p), u_\la(p))$ has rank 2 and $\mathcal T$ has codimension 1. 
 
Likewise, if  $u_\la(p)$ were constant along $\mathcal T$, 
 $\ln\abs{s_\la}$, and hence  $s_\la$  would be  constant 
 leading to the same contradiction. 
 \end{proof}
 
 Therefore, moving $\lo$  slightly and reducing $U$ again, 
we may assume that $u_{\la}(p)\notin\R$ for every $\lambda\in U$. 
Recall the set of disjoint hypersurfaces $\mathcal T_\ell$ in $U$ constructed above. 

\begin{lem}\label{lem:R_Zariski}
$\bigcup_\ell \mathcal T_\ell$ is $\R$-Zariski dense in $U$. 
\end{lem}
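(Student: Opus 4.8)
The plan is to prove the stronger statement that no nonzero real polynomial $P$ on $U$ (i.e. no nonzero polynomial in the real and imaginary parts of the coordinates) vanishes identically on $\bigcup_n\mathcal T_n$. Such a $P$ would then also vanish on the closure of that union, so it is enough to exhibit, inside $\overline{\bigcup_n\mathcal T_n}$, a family of complex hypersurfaces whose union is $\R$-Zariski dense. This family will be built from the stable lamination of the horseshoe $E$ by backward iteration, the crucial point being that these hypersurfaces \emph{spiral} towards $\mathcal T$ precisely because $u_{\lo}(p)\notin\R$.

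Fix a leaf $\Gamma_*$ of $W^s_\B(E)$ distinct from $W^s_\loc(p)$, so that $\Gamma_*\cap\set{y=0}=(\alpha_*(\lambda),0)$ with $\alpha_*$ holomorphic and $\abs{\alpha_*(\lambda)}\asymp1$. After linearising $f$ along $W^u_\loc(p)$ as in the proof of Theorem~\ref{thm:persistent}, the $m$-th truncated pull-back $\Gamma_{*,m}\subset W^s_\B(E)$ meets $\set{y=0}$ at $(\alpha_*(\lambda)u_\lambda(p)^{-m},0)$ and, by the graph-transform estimates of \S\ref{subs:graph_transform} applied to $f\inv$, is a vertical graph in $\B$ of slope $O\bigl(\abs{s_\lambda(p)u_\lambda(p)\inv}^m\bigr)$; let $\mathcal S_m\subset U$ be its tangency locus with $\Delta^u_\lambda$. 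Since $W^s(p)\cap W^s_\B(E)$ is dense in $W^s_\B(E)$ and the $\Gamma_n$ are dense therein, each $\Gamma_{*,m}$ is a $C^1$-limit of a subsequence of $(\Gamma_n)$, so by persistence of proper intersections $\mathcal S_m\subset\overline{\bigcup_n\mathcal T_n}$; likewise $\mathcal T\subset\overline{\bigcup_n\mathcal T_n}$. We are thus free to move $\lo$ along $\mathcal T$, and since $u_\lambda(p)$ is non-constant on $\mathcal T$ (shown right after Lemma~\ref{lem:jacobian}) we may assume $\arg u_{\lo}(p)/\pi$ is irrational. Because $\mathcal T=\set{x_v=0}$, where $x_v(\lambda)$ is the abscissa of the unique (the tangency being quadratic) vertical tangency of $\Delta^u_\lambda$, the non-constancy of $u|_{\mathcal T}$ also forbids $du(p)$ to be proportional to $dx_v$. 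Choose local coordinates $(\lambda_1,\lambda')\in\C\times\C^{\dim U-1}$ centred at $\lo$ with $dx_v=d\lambda_1$ at $\lo$, so that $\mathcal T$ is a graph $\lambda_1=\lambda_1^*(\lambda')$ and $c'\mapsto u_{(0,c')}(p)$ is non-constant; hence $\arg u_{(0,c')}(p)/\pi$ is irrational for $c'$ in a dense subset $D$ of a ball $B(0,r_1)\subset\C^{\dim U-1}$.

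Next I would run the local analysis of the proof of Proposition~\ref{prop:main} (Lemmas~\ref{lem:elementary_analysis} and~\ref{lem:creating_tangencies} together with Proposition~\ref{prop:graph_transform}) to describe $\mathcal S_m$ near $\lo$: for large $m$ the tangency of $\Gamma_{*,m}$ with $\Delta^u_\lambda$ occurs at the vertical tangency of $\Delta^u_\lambda$, up to an error controlled by the slope of $\Gamma_{*,m}$, so that
\begin{equation*}
\mathcal S_m\cap B(\lo,r_0)=\set{\,x_v(\lambda)=\alpha_*(\lambda)\,u_\lambda(p)^{-m}+E_m(\lambda)\,},\qquad \norm{E_m}=O\bigl(\abs{s_\lambda(p)u_\lambda(p)\inv}^m\bigr)=o\bigl(\abs{u_\lambda(p)}^{-m}\bigr).
\end{equation*}
In particular $\mathcal S_m$ is a smooth complex hypersurface near $\lo$ and, by Rouché, meets every complex line $L_{c'}=\set{\lambda'=c'}$ ($c'\in B(0,r_1)$) at a point whose $\lambda_1$-coordinate equals $\lambda_1^*(c')+z_m(c')$ with $z_m(c')=\beta(c')\,u_{(0,c')}(p)^{-m}\bigl(1+o(1)\bigr)$, $\beta(c')\neq0$. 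For $c'\in D$ the sequence $(z_m(c'))_m$ tends to $0$, its only cluster point is $0$, and $\arg z_m(c')\equiv\arg\beta(c')-m\arg u_{(0,c')}(p)+o(1)\pmod\pi$ is dense in $\R/\pi\Z$. Hence $\set{z_m(c'):m}$ is not contained in any proper real algebraic subset of $\C$: it is infinite (so not $0$-dimensional), it accumulates at $0$ and has empty interior, and a real algebraic curve through $0$ has only finitely many branches, hence finitely many tangent directions at $0$, contradicting the density of these arguments mod $\pi$; so its $\R$-Zariski closure is a full neighbourhood of $0$.

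Finally, if $P$ vanishes on $\bigcup_n\mathcal T_n$ it vanishes on every $\mathcal S_m$, so for each $c'\in D$ the real polynomial $\lambda_1\mapsto P(\lambda_1,c')$ vanishes on the $\R$-Zariski dense set $\set{\lambda_1^*(c')+z_m(c'):m}$, hence identically on $L_{c'}$; as $\set{c':P(\cdot,c')\equiv0}$ is closed and contains the dense set $D$, it is all of $B(0,r_1)$, so $P$ vanishes on an open subset of $U$ and $P\equiv0$, which is the claim. The main obstacle is the quantitative local description of $\mathcal S_m$: upgrading the qualitative convergence $\mathcal S_m\to\mathcal T$ to the explicit graph $\lambda_1=\lambda_1^*(c')+\beta(c')u_{(0,c')}(p)^{-m}(1+o(1))$ with enough control on the $o(1)$, which is exactly where the $C^k$ graph-transform estimates of \S\ref{subs:graph_transform} and the Rouché argument are needed; once this is in place the conclusion is a soft argument about real algebraic curves and an irrational rotation.
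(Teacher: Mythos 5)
Your route is genuinely different from the paper's, and in outline it works. The paper first proves $\R$-Zariski density \emph{in the dynamical space}: for each horizontal slice, the trace of the stable lamination of $E$ contains self-similar spirals at a dense set of points (using only $u_\la(p)\notin\R$), so it cannot lie on a real-analytic curve; it then transfers this to parameter space by the renormalization observation that, along a one-parameter family where the tangency unfolds with positive speed, the tangency parameters contain approximate rescaled copies of $E^s_{\lo}(0)$. You instead work directly in parameter space with a single orbit of leaves: the tangency hypersurfaces $\mathcal S_m$ of the pull-backs $\Gamma_{*,m}$ trace, on each complex line transverse to $\mathcal T$, a single geometric spiral $\beta\,u^{-m}(1+o(1))$ accumulating $\mathcal T$, and you defeat real-analytic curves by the finitely-many-half-branches/tangent-rays argument. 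The price is that a single spiral with $\arg u\in\pi\mathbb Q$ can sit on finitely many rays, so you must arrange $\arg u/\pi$ irrational at a dense set of base points on $\mathcal T$; this is legitimately available here because $u_\la(p)$ is non-constant along $\mathcal T$ (established after Lemma~\ref{lem:jacobian}), but note it is an extra ingredient the paper's self-similarity argument does not need. The quantitative description of $\mathcal S_m$ that you flag as the main obstacle is indeed the same computation as Part~II of Proposition~\ref{prop:main} (Lemma~\ref{lem:elementary_analysis} plus Lemma~\ref{lem:creating_tangencies} restricted to the line $L_{c'}$), so it is available.

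Two points need repair. First, with your normalization ($dx_v=d\lambda_1$ only \emph{at} $\lo$) the hyperplane $\set{\lambda_1=0}$ is not $\mathcal T$, so the asymptotics $z_m(c')=\beta(c')\,u_{(0,c')}(p)^{-m}(1+o(1))$ has the wrong base point: the multiplier must be evaluated at the accumulation point $\mathcal T\cap L_{c'}=(\lambda_1^*(c'),c')$, since replacing it by $u_{(0,c')}$ changes $u^{-m}$ by a factor which is exponentially far from $1+o(1)$. The fix is simply to take $\lambda_1:=x_v(\lambda)$ as the first coordinate, straightening $\mathcal T$ to $\set{\lambda_1=0}$; then your formula and your irrationality condition are evaluated on $\mathcal T$, and the density of the good set $D$ follows from the non-constancy of $u\rest{\mathcal T}$ alone (your intermediate claim that this non-constancy ``forbids $du$ to be proportional to $dx_v$'' at $\lo$ is neither correct pointwise nor needed). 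Second, you prove non-containment in proper \emph{real algebraic} subsets, whereas the way the lemma is used afterwards (coincidence of the real-analytic foliations $\mathcal F(p)$ and $\mathcal F(q)$) requires non-containment in proper \emph{real-analytic} subvarieties, which is what the paper's proof establishes. Your argument does adapt verbatim --- the only properties of $P$ you use are continuity, the identity principle on the connected $U$, and the local finiteness of half-branches with tangent rays of the zero set restricted to a line, all valid for real-analytic functions --- but as written the statement proved is weaker than what is needed. Finally, a small caveat: invoking Proposition~\ref{prop:graph_transform} imposes a non-resonance hypothesis that need not hold at $\lo$; for the slope bound $o(\abs{u}^{-m})$ that your error term $E_m$ actually requires, the cone-field argument from Step~1 of the proof of Theorem~\ref{thm:persistent} suffices and needs no such hypothesis.
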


\begin{proof}\commentaire{proof completely rewritten, cf Minor comment \#11}
 Arguing as in the proof of Corollary~\ref{cor:quadratic_henon} 
and using the transversality of $\widehat{\P T\Delta^u}$ and $\widehat{\P TW^s_\loc}(p)$ and the $C^1$ closeness  of the $\Theta_\ell$ and $W^s_\loc(p)$ 
it is not difficult to see that  the tangency hypersurfaces 
$\mathcal T_\ell$ are $C^1$-close to $\mathcal T$ if the horseshoe $\mathcal H$ was chosen to be 
 close to $p$. 
Let $\Sigma$ be a   holomorphic disk transverse to $\mathcal T$ at $\lo$, along which the tangency unfolds with positive speed. From  the previous assertion, 
we may assume that  $\Sigma$ is transverse to the $\mathcal T_\ell$ as well. 

Identifying 
$(\Sigma, \lo)$ and $(\D, 0)$ and arguing exactly as in Lemma~\ref{lem:conservative}, we construct a sequence of tangency parameters $\lambda_n\in \bigcup \mathcal T_\ell\cap \Sigma$ such that 
$\lambda_n\cong u_{\lo}(p)^{-n}$. Since $u_{\lo}(p)\notin\R$, any analytic curve $Y$ containing 
$\bigcup \mathcal T_\ell\cap \Sigma$ must be singular at 0. Now, recall that the $\Theta_\ell$ are branches of 
$W^s(p)\cap W^s_\loc(\mathcal H)$, so by self-similarity of  $\mathcal H$ any branch $\Theta_\ell$ is also a limit of a sequence $\Theta_{\ell_j}$ (cf. the proof of Lemma~\ref{lem:perfect}), so we can repeat this reasoning at every point of $\bigcup \mathcal T_\ell\cap \Sigma$, and conclude $Y$ must be singular at all these points. From this we deduce that $\bigcup \mathcal T_\ell\cap \Sigma$ is $\R$-Zariski dense in $\Sigma$. 

Now, assume by way of contradiction that $\bigcup \mathcal T_\ell$ is contained in a proper 
real-analytic variety $X$, which must be of dimension $\dim_\R(\La)-1$.
From the previous paragraph we see that  
$\Sigma$ must be contained in $X$. 
%Since for every $\ell$
%$\mathcal T_\ell$ is contained in $X$ and is transverse to $\Sigma$, we see that 
%at  $t_\ell = \Sigma\cap \mathcal T_\ell$ 
%we have $T_{t_\ell} \Sigma\oplus T_{t_\ell}\mathcal T_\ell = T_{t_\ell}\Lambda\subset T_{t_\ell} X$, so $X$ is singular at $t_\ell$. Since $\bigcup T_\ell\cap \Sigma$ is $\R$-Zariski dense in $\Sigma$, 
%$X$ must be singular along $\Sigma$. 
But $\Sigma$  can be chosen arbitrarily in some open family, so $X=\Lambda$, 
which is a contradiction.  
\end{proof}

\commentaire{Correction for Minor comment \#12} 
If  $c= \frac{\ln\abs{u_{\la_1}(p)}}{\ln\abs{s_{\la_1}(p)}}$ for some $\lambda_1 \in U$, 
the level set 
$\frac{\ln\abs{u_\la(p)}}{\ln\abs{s_\la(p)}} = c$ is 
foliated by complex hypersurfaces of the form  $u_\la(p) = s_\la(p)^c e^{i\alpha}$,  for varying $\alpha$.
By letting $c$ vary as well,  
this defines a real-analytic foliation $\mathcal F(p)$ of $U$ by complex hypersurfaces. 
%Consider the foliation $\mathcal F(p)$ in $U$ whose leaves are the level sets of  
%  $\lambda \mapsto \frac{\ln\abs{u_\la(p)}}{\ln\abs{s_\la(p)}} \in \R$. Observe  that  
%  the leaves are the   complex hypersurfaces defined by 
%  $u_\la(p) = s_\la(p)^c e^{i\alpha}$ for   $(c, \alpha)\in \R^2$, but the foliation is only real-analytic. 
Since $\lambda \mapsto (s_\la(p), u_\la(p))$ has rank 2, $\mathcal F(p)$ is a smooth foliation. 
 Our contradiction hypothesis, together with Lemma~\ref{lem:R_Zariski}
  implies that  \emph{$\bigcup_n \mathcal T_n$ is a $\R$-Zariski dense 
set of leaves of $\mathcal F(p)$}.  

Fix now another   periodic point $q$ that is a saddle at $\lo$. 
Shifting $\lo$ slightly we may assume $\lambda \mapsto (s_\la(q), u_\la(q))$ has rank 2  at $\lo$. 
Fix a  smaller  connected open neighborhood  $V\subset U$ of $\lo$ in which $q$ remains
 a saddle and this submersion property persists. 

\begin{lem}
The foliations $\mathcal F(p)$ and $\mathcal F(q)$ coincide in $V$. 
\end{lem}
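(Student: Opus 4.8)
The plan is to reduce the statement to showing that $\tfrac{\ln\abs{u_\lambda(q)}}{\ln\abs{s_\lambda(q)}}$ is constant along each hypersurface $\mathcal T_n$, and then to extract this constancy from the moduli-of-stability machinery used in the proof of Theorem~\ref{thm:persistent}. Since $\mathcal F(p)$ and $\mathcal F(q)$ are both smooth real-analytic foliations of $V$, they coincide precisely when the nowhere-vanishing $1$-forms $d\big(\tfrac{\ln\abs{u_\lambda(p)}}{\ln\abs{s_\lambda(p)}}\big)$ and $d\big(\tfrac{\ln\abs{u_\lambda(q)}}{\ln\abs{s_\lambda(q)}}\big)$ are everywhere proportional, i.e. when their wedge vanishes identically; being a real-analytic condition, this may be checked on any $\R$-Zariski dense subset of $V$. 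By Lemma~\ref{lem:R_Zariski} the union of those $\mathcal T_n$ that meet $V$ is such a subset, and on each $\mathcal T_n$ the function $\tfrac{\ln\abs{u_\lambda(p)}}{\ln\abs{s_\lambda(p)}}$ is constant, since $\mathcal T_n$ is a leaf of $\mathcal F(p)$ (this is exactly what the contradiction hypothesis, combined with Lemma~\ref{lem:R_Zariski}, has already given). So it is enough to show that $\tfrac{\ln\abs{u_\lambda(q)}}{\ln\abs{s_\lambda(q)}}$ is constant along each such $\mathcal T_n$.

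Fix one such $n$; replacing $\mathcal T_n$ by an irreducible component we may assume it is connected. The family $(f_\lambda)_{\lambda\in\mathcal T_n}$ is a holomorphic family of dissipative polynomial automorphisms of $\C^2$ of constant dynamical degree, hence substantial, so by the weak stability theory of~\cite{tangencies} its weakly $\jstar$-stable locus is open and dense in $\mathcal T_n$. I would then pick a connected open $W\subset\mathcal T_n\cap V$ on which the family is weakly stable. On $W$ one has simultaneously a persistent homoclinic tangency associated to $p$ (persistent along all of $\mathcal T_n$), the points $p$ and $q$ homoclinically related (this holds on $V$ after shrinking the initial neighbourhood, as in the proof of Theorem~\ref{thm:quadratic_henon}), and $q$ a saddle throughout $W$, because a weakly stable family admits no change of type. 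Since $\tfrac{\ln\abs{u_\lambda(q)}}{\ln\abs{s_\lambda(q)}}$ is real-analytic on the connected manifold $\mathcal T_n$, it now suffices to prove it is constant on $W$.

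On $W$ the next step is to produce two persistent heteroclinic tangencies: one between $W^s(p_\lambda)$ and $W^u(q_\lambda)$, and one between $W^s(q_\lambda)$ and $W^u(p_\lambda)$. Because $p$ and $q$ are homoclinically related, $\overline{W^u(q_\lambda)}$ contains the branch $\Delta^u(p_\lambda)$ of $W^u(p_\lambda)$ tangent to $W^s_\loc(p_\lambda)$, and $\overline{W^s(q_\lambda)}$ contains $W^s_\loc(p_\lambda)$; hence there are disks of $W^u(q_\lambda)$ that are $C^1$-close to $\Delta^u(p_\lambda)$ and vertical graphs of $W^s(q_\lambda)$ that are $C^1$-close to $W^s_\loc(p_\lambda)$, all moving holomorphically with $\lambda\in W$. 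Combining this with the secondary-intersection mechanism (Proposition~\ref{prop:secondary} and Remark~\ref{rmk:near_tangencies}) and a parameter-exclusion argument in $W$ of the type of Lemma~\ref{lem:creating_tangencies}, one creates a heteroclinic tangency of each of the two types at some parameter of $W$; and since $W$ is weakly stable — so that the branched holomorphic motion of $J^+\cup J^-$ respects the stable and unstable manifolds of all saddle periodic points — any such tangency is automatically persistent on $W$. \emph{This is the step I expect to be the main obstacle}: one must verify that these near-tangent pieces genuinely become exactly tangent for some parameter of $W$, and that the tangency so obtained is a homoclinic-type tangency with a branch of $W^s(p_\lambda)$, respectively $W^u(p_\lambda)$, and not merely a vertical tangency with the ambient coordinate foliation.

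Granting the three persistent tangencies on $W$, the conclusion is immediate from the moduli argument. Running Steps 1--2 of the proof of Theorem~\ref{thm:persistent} on the persistent homoclinic tangency of $p$ gives $\ln\abs{u_\lambda(p)}=k_1\ln\abs{s_\lambda(p)}$ on $W$; running its heteroclinic variant (the Remark following that proof) on the two heteroclinic tangencies gives $\ln\abs{u_\lambda(p)}=k_2\ln\abs{s_\lambda(q)}$ and $\ln\abs{u_\lambda(q)}=k_3\ln\abs{s_\lambda(p)}$, where $k_1,k_2,k_3$ are nonzero real constants — nonzero because $p$ and $q$ are saddles, so none of the four logarithms vanishes. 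Eliminating, $\ln\abs{s_\lambda(q)}=(k_1/k_2)\ln\abs{s_\lambda(p)}$ and $\ln\abs{u_\lambda(q)}=k_3\ln\abs{s_\lambda(p)}$, so $\tfrac{\ln\abs{u_\lambda(q)}}{\ln\abs{s_\lambda(q)}}=k_2k_3/k_1$ is constant on $W$, hence on $\mathcal T_n$, hence on the $\R$-Zariski dense set $\bigcup_n\mathcal T_n\cap V$; by the first paragraph this yields $\mathcal F(p)=\mathcal F(q)$ in $V$.
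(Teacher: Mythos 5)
Your reduction in the first paragraph (it suffices to show that $\frac{\ln\abs{u_\lambda(q)}}{\ln\abs{s_\lambda(q)}}$ is constant along each $\mathcal T_n$, then use real-analyticity and Lemma~\ref{lem:R_Zariski}) is sound and is in effect what the paper also does. But the way you try to obtain this constancy has a genuine gap at its very first step: you assert that ``by the weak stability theory of~\cite{tangencies} the weakly $\jstar$-stable locus is open and dense in $\mathcal T_n$''. Openness is true, density is not a result of~\cite{tangencies} and cannot be taken for granted -- indeed the whole Newhouse/ALZ circle of ideas discussed in this paper (Corollary~\ref{cor:ALZ}: the bifurcation locus is the closure of its interior) shows that weak stability fails on open sets of parameters, and a subfamily carrying a persistent homoclinic tangency, such as $(f_\lambda)_{\lambda\in\mathcal T_n}$, is precisely where one expects robust bifurcations. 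Without a weakly stable open subset $W\subset\mathcal T_n$ you cannot invoke Theorem~\ref{thm:persistent} or its heteroclinic variant at all, so the three moduli relations $k_1,k_2,k_3$ never get off the ground. The secondary step you flag yourself is also a real obstruction: Remark~\ref{rmk:near_tangencies} only yields transverse intersections with small angle at a \emph{fixed} parameter, and Lemma~\ref{lem:creating_tangencies} needs a degeneration hypothesis in the parameter direction (the horizontal manifold must cease to be a union of graphs at some interior parameter) which is not verified inside $\mathcal T_n$; worse, on a weakly stable $W$ no non-persistent tangency can occur at all, so the two heteroclinic tangencies you need would have to be persistently present from the outset, which is exactly what is unproven.

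The paper's proof avoids all of this by exploiting that the standing contradiction hypothesis of Theorem~\ref{thm:newhouse_henon} applies to homoclinic tangencies of \emph{any} saddle, in particular of $q$ itself. Since $p$ and $q$ are homoclinically related, the Inclination Lemma produces vertical graphs $\widetilde\Gamma_m\subset W^s(q)$ accumulating $W^s_\B(E)$ and branches $\widetilde\Delta^u_k\subset W^u(q)$ converging to $\Delta^u$, all followed holomorphically; the corresponding tangency loci $\widetilde{\mathcal T}_{m,k}$ are hypersurfaces of homoclinic tangencies of $q$, hence are contained in leaves of $\mathcal F(q)$ by hypothesis. Using transversality of the lifts in the projectivized tangent bundle, $\widetilde{\mathcal T}_{m_j,k_j}\to\mathcal T_n$ in the Hausdorff sense, and since the $\widetilde{\mathcal T}_{m_j,k_j}$ sit in a foliation chart of $\mathcal F(q)$, the limit $\mathcal T_n$ is itself a leaf of $\mathcal F(q)$; no weak stability of the subfamily and no moduli-of-stability argument is needed. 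If you want to salvage your approach you would have to either prove density of weak stability along $\mathcal T_n$ or construct the persistent heteroclinic tangencies directly, both of which are substantially harder than the limiting argument above.
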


\begin{proof}  
It is enough to prove the result in some possibly smaller $V'$. 
Since $p_\lo$ and $q_\lo$ admit 
transverse heteroclinic intersections, by the inclination lemma there exists a 
countable set $(\widetilde \Theta_m)$ of disjoint vertical graphs contained in 
$W^s(q)\cap \B$, whose closure contains $W^s_\loc(\mathcal H)$, which can be followed holomorphically in some $V'$. 
Fix also a sequence of    branches $\widetilde \Delta^u_k$ of $W^u(q)\cap \B$ which converges to 
 $\Delta^u$. Restricting to sufficiently large $k$, we may assume that 
 $\widetilde \Delta^u_{k, \lambda}$ remains close to  $\Delta^u_\lambda$
  throughout $V'$, so in particular it is of horizontal degree 2 and its unique vertical tangency moves with positive speed. 
  The tangency locus 
between $\widetilde \Theta_{m,  \la}$ and $\widetilde \Delta^u_{k, \lambda}$  is then 
 a hypersurface $\widetilde {\mathcal T}_{m, k}$, 
and our contradiction hypothesis implies that each $\widetilde {\mathcal T}_{m, k}$ 
is contained in a leaf of $\mathcal F(q)$.
Let $V'$ be small enough so that the leaves of $\mathcal F(q)$ in $V'$ are contained in a foliation chart, 
so they form a  uniformly bounded family of  graphs.
 
 The key observation is that any $\Theta_{\ell}$ is the  limit of a sequence $(\widetilde \Theta_{m_j})$, uniformly in $\lambda$, that 
 is, the convergence holds for the corresponding fibered objects in $\Lambda\times \B\times \P^1$. 
 So for any sequence $k_j\to \infty$, we infer that $\widetilde {\mathcal T}_{m_j, k_j}$ converges to $\mathcal T_\ell$ in the Hausdorff (even $C^1$) topology in $V'$. 
This follows easily from  the fact that the lifts of  $\widetilde \Delta^u$ 
and $\Theta_{\ell}$ to the projectivized tangent bundle are transverse (see also the proof of Lemma~\ref{lem:R_Zariski}). 
Since the $\widetilde {\mathcal T}_{m_j, k_j}$ are contained in a foliation chart of $\mathcal F(q)$,
we conclude  that $\mathcal T_\ell$ is a leaf of $\mathcal F(q)$. Thus $\mathcal F(p)$ and $\mathcal F(q)$ coincide on a 
 $\R$-Zariski dense subset, and 
 we are done. 
\end{proof}

\noindent{\bf Step 3:} continuation and conclusion. 

Since $\lo$ belongs to the bifurcation locus,  as in Corollary~\ref{cor:quadratic_henon} we can 
select the point  $q$ so that it bifurcates to a sink in $U$. 
Consider a connected open subset $U'\subset U$ in
  which $q$ can be followed holomorphically, 
 but its unstable multiplier crosses the unit circle.  
 (Recall that by the Implicit Function Theorem,  $q$ can be locally followed holomorphically unless some eigenvalue equals 1.) Consider  a parameter $\la_1$ at which $\abs{u_{\lambda_1}(q)} = 1$ and $ {u_{\lambda_1}(q)} \neq  1$, hence 
 $u_{\lambda_1}(q) = e^{i\beta}$ for some $\beta\in \R\setminus 2\pi\Z$.  
 Then the leaf of $\mathcal F(q)$ through $\lambda_1$ is 
 $W_\beta:=
 \set{\lambda \in U',  u_{\lambda}(q) = e^{i\beta}}$. Indeed  the leaf is of the form 
  $\frac{\ln\abs{u_{\la_1}(q)}}{\ln\abs{s_{\la_1}(q)}} = c'$, but necessarily  $c'=0$. Since  
 $\mathcal F(p)$ and $\mathcal F(q)$ are real analytic and 
 coincide near $\lo$, by analytic continuation we see that $W_\beta$ is also a 
 leaf of $\mathcal F(p)$, of the form $u_\la(p) = s_\la(p)^c e^{i\alpha}$ for some $c\in(-1, 0)$. 
 
 Let 
$  \Lambda_{\mathrm {dissip}}$ be the region of the   parameter space 
made of dissipative maps.

\begin{lem}\label{lem:algebraic}
$W_\beta$  extends to  an irreducible algebraic hypersurface $\overline{W}_\beta$ in 
 the extended 
  parameter space $\widetilde \Lambda$, and \commentaire{Change here for Major remark \#4}
  the connected component $Y$ 
  of $\lambda_1$ in   $\overline W_\beta\cap \Lambda_{\mathrm{dissip}}$ intersects  the zero Jacobian locus.
\end{lem}
 
Let us admit this result for the moment and conclude the proof of the theorem. 
Let us  first show that the coincidence between 
$\mathcal F(p)$ and $\mathcal F(q)$ propagates along 
$Y$. Since these foliations are defined only in terms of the eigenvalues of $p$ and $q$, for this  it is enough to show that we can follow $p$ and $q$ as periodic points along $Y$. For $q$ this is obvious because along $W_\beta$ 
one multiplier is 
$e^{i\beta}\neq 1$ and the other one is smaller than 1 in modulus 
by dissipativity. 
For $p$, since $Y$ is a leaf of $\mathcal F(p)$ near $\lambda_1$ of the form 
$u_\la (p) = s_\la (p)^c e^{i\alpha}$, 
by (real) analytic continuation, this property persists as long as we can follow $p$. 
But    this relation implies that if one eigenvalue hits 1, then the other one has modulus 1 as well, which is impossible in the dissipative regime, and we are done. 

Now, let $\lambda \in Y$ converge 
to some parameter $\la_2\in \widetilde \Lambda$  
with Jacobian 0. For $q$, this means that the stable eigenvalue tends to 0. For $p$,   the
fact that $-1<c<0$ in the  relation   $u_\la (p) = s_\la (p)^c e^{i\alpha}$ forces 
$\abs{s_\la(p)}\inv \geq \abs{u_\la(p)}$, so $s_\la(p)$ remains the stable eigenvalue. 
Thus, as the Jacobian tends to 0, $s_\la(p)$ 
must tend to  zero, hence $\abs{u_\la(p)}$ tends to infinity, which is a contradiction 
because $f_\la$ converges to some well-defined 1-dimensional map in $\widetilde \Lambda$. 
This finishes the proof of Theorem~\ref{thm:newhouse_henon}.\qed

\begin{proof}[Proof of Lemma~\ref{lem:algebraic}]
Basic elimination theory shows that 
 $W_\beta= \set{\lambda,   u_{\lambda}(q) = e^{i\beta}}$ is defined by an algebraic condition (see e.g.~\cite[\S 2.3]{BHI}), so it 
defines an  affine algebraic hypersurface 
 in $\widetilde \Lambda$. The non-trivial fact is that 
$Y$  hits the zero Jacobian locus. 

For concreteness let us first explain the argument in
the space $\set{f_{a, c}, (a, c)\in \C^\varstar\times \C}$ of quadratic Hénon maps, where 
$f_{a,c}(z,w)  = (z^2 + c  + aw, z)$. By Lemma~\ref{lem:jacobian} 
the Jacobian (which equals  $-a$) 
is non-constant along $W_\beta$.  On the other hand, there exists 
$C$ such that the set $\set{(a, c), \  \abs{c}>C \text{ and }
\abs{a}<1}$ is contained in the horseshoe locus, and in this region 
 all periodic points are saddles.  
It follows that $\overline W_\beta\cap   \Lambda_{\mathrm {dissip}} = 
\overline W_\beta\cap \set{(a, c), \ \abs{a}<1}\subset\set{(a, c), \ \abs{c}\leq C}$. 
\commentaire{Major remark \#4, continued}
Thus $Y$ is a bounded domain in the affine algebraic curve $\overline W_\beta$, and 
$(a,c)\mapsto \jac(f_{a,c})$ is a non constant holomorphic function on $Y$, such that 
$\abs{\jac(f_{a,c})}=1$ on $\fr Y$, so its 
minimum  modulus must be 0.   (Apply the minimum principle in the normalization of $Y$, 
which  may have several irreducible components, in which case this reasoning applies to  each component.)

This argument can be transposed to the general case. 
Indeed, in \cite[Prop. 5.1]{BHI}, given 
any $f  = h_{P_1, a_1}\circ \cdots \circ  h_{P_k, a_k}\in \Lambda$, 
the authors construct an explicit algebraic 
 2-parameter family $(f_{a, c})_{(a, c)\in \C^\varstar\times \C} $, 
with $\jac (f_{a,c})\to 0$ when $a\to 0$ and such that for any $A>0$,
there exists $C>0$ such that if 
$\abs{c}>C$ and $\abs{a}<A$, all periodic points are saddles. 
So we restrict $W_\beta$ to this 2-dimensional family and argue exactly as in the quadratic case. 
\end{proof}

\appendix

\section{Weak  stability for polynomial automorphisms of $\C^2$}

Here we briefly review  the notion of weak ($\jstar$-)stability from~\cite{tangencies} 
(and further developed in~\cite{hyperbolic}). 
First, recall the usual vocabulary of complex Hénon maps:  
$K^+$ and $K^-$ are respectively the sets of bounded forward and backward orbits; 
$J^+ = \fr K^+$ and $J^-  = \fr K^-$ are the forward and backward Julia sets, and 
   $\jstar$ is  the closure of the set of saddle periodic points. 
   
Any  family of polynomial automorphisms of $\C^2$ of constant dynamical degree
is   conjugate to a family of compositions of Hénon mappings~\cite[Prop. 2.1]{tangencies}. A family of polynomial automorphisms of dynamical degree $d\geq 2$ is said to be {\em substantial} if: 
 either all its members are dissipative or for any periodic point with eigenvalues $\alpha_1$ and $\alpha_2$, 
 no relation of the form $\alpha_1^a\alpha^b_2 = c$ holds persistently in parameter space, 
 where $a$ and  $b$  are real numbers\footnote{It was inaccurately 
 stated in~\cite{tangencies} that $a$ and $b$ could be complex but the   issue is really 
  for real $a$ and $b$.}
and $\abs{c} =1$. 

 A {\em branched holomorphic motion}  is a family of holomorphic graphs over $\La$ in $\La\times \C^2$. 
It is said  \emph{normal} if these graphs form a normal family. As the  ``branched'' terminology
suggests, these graphs are allowed to intersect, while in a holomorphic motion they are not. 
  
 A substantial family $(f_\la)_{\la\in \La}$ of 
 polynomial automorphisms is said to be  {\em weakly $\jstar$-stable} if every periodic point  stays of 
 constant type (attracting, saddle, indifferent, repelling) in the family. 
 Equivalently, $(f_\la)$ is weakly $\jstar$-stable if the periodic points 
 move under a holomorphic motion. Then the   
 sets $\jstar_\la$ move under  an equivariant  branched holomorphic motion~\cite[Thm. 4.2]{tangencies}. This motion is unbranched at periodic points, heteroclinic intersections~\cite[Prop. 4.14]{tangencies}, and more generally points with some hyperbolicity~\cite{hyperbolic}. In a weakly $\jstar$-stable family, every heteroclinic or homoclinic tangency must be persistent~\cite[Prop. 4.14]{tangencies}. Note the contraposite statement: if in  a family $(f_\la)$ there is a non-persistent tangency
 at $\lo$, then some saddle point must change type near $\lo$. 
 
Using the automatic extension properties of plane holomorphic motions and the density of  
  stable and unstable manifolds of saddles, the motion of $\jstar$ can be extended to a branched holomorphic motion of $J^+\cup J^-$. It is   important   that this extended motion is normal in $\C^2$ and respects saddle points and their 
   stable and unstable manifolds, as well as  the sets $K^\pm$ and their complements~\cite[Lem. 5.10, Thm. 5.12 and Cor. 5.14]{tangencies}.  Thus we are entitled to simply call such a  family \emph{weakly stable}. 

 \bibliographystyle{acm}
\bibliography{bib-degenerate}

\end{document}